\newtheorem{theorem}{Theorem}
\newtheorem{thm}{Theorem}[section]
\newtheorem{lemma}[thm]{Lemma}
\newtheorem{definition}[thm]{Definition}
\newtheorem{corollary}[thm]{Corollary}
\newtheorem{claim}[thm]{Claim}
\def\.{\hskip.06cm}
\newcommand{\ccirc}{\mathbin{\mathchoice
  {\xcirc\scriptstyle}
  {\xcirc\scriptstyle}
  {\xcirc\scriptscriptstyle}
  {\xcirc\scriptscriptstyle}
}}
\newcommand{\xcirc}[1]{\vcenter{\hbox{$#1\circ$}}}
\newcommand{\E}{\mathbb{E}}
\newcommand{\p}{\mathbb{P}}
\newcommand{\m}{\mu_{n, q}}
\newcommand{\n}{\mu_{n, q_n}}
\newcommand{\inv}{l}
\newcommand{\IV}{\text{Inv}}
\newcommand{\var}{\text{Var}}
\newcommand{\lis}{\text{LIS}}
\newcommand{\lcs}{\text{LCS}}
\newcommand{\bs}{\boldsymbol}
\begin{document}

\title[Limit Theorems for the Length of the LCS of Mallows Permutations]{Limit Theorems for the Length of the Longest Common Subsequence of Mallows Permutations}
\author{Naya Banerjee$^\dagger$ \and Ke Jin$^\ddagger$}

\thanks{\thinspace ${\hspace{-.45ex}}^\dagger$Mathematical Sciences,
University of Delaware, Newark, DE, 19716.
\hskip.06cm
Email:
\hskip.06cm
\texttt{naya@math.udel.edu}}

\thanks{\thinspace ${\hspace{-.45ex}}^\ddagger$Mathematical Sciences,
University of Delaware, Newark, DE, 19716.
\hskip.06cm
Email:
\hskip.06cm
\texttt{kejin@math.udel.edu}}

\maketitle

\begin{abstract}
The Mallows measure is measure on permutations which was introduced by Mallows in connection with ranking problems in statistics. Under this measure, the probability of a permutation $\pi$ is proportional to $q^{Inv(\pi)}$ where $q$ is a positive parameter and $Inv(\pi)$ is the number of inversions in $\pi$. We consider the length of the longest common subsequence (LCS) of two independently permutations drawn according to $\mu_{n,q}$ and $\mu_{n,q'}$ for some $q,q' >0$.

We show that when $0<q,q'<1$, the limiting law of the LCS is Gaussian. In the regime that $n(1-q) \to \infty$ and $n(1-q') \to \infty$ we show a weak law of large numbers for the LCS. These results extend the results of \cite{Basu} and \cite{Naya} showing weak laws and a limiting law for the distribution of the longest increasing subsequence to showing corresponding results for the longest common subsequence.
\end{abstract}

\section{Introduction}

The length of the longest common subsequence (LCS) of two strings is a measure of their similarity. It is related to the edit distance, which quantifies the number of operations such as insertion, deletion or substitution that are required to transform one string to the other. Calculating the similarity between sequences is a problem that arises naturally in applications such as natural language processing, linguistics, and DNA and protein alignment \cite{Cap12, Pev00, KruSan83, Wat95}.

The LCS has been studied intensively from an algorithmic perspective in computer science and bioinformatics, but there are fewer theoretical results on the asymptotic behavior and laws of the LCS for random sequences. One of the first results is due to Chv\'{a}tal and Sankoff \cite{Sankoff} who showed that the expected length of the LCS of two random $k$-ary sequences of length $n$ when normalized by $n$ converges to a constant $\gamma_k$. Several authors have attemped to determine $\gamma_k$ \cite{Deken, Dancik, Paterson, Lueker} but only bounds are known and the precise value of the limiting constant remains unknown for all $k$.

In this work we focus on the LCS of two random permutations. This problem can be seen to be related to the problem of finding the longest increasing subsequence (LIS) as follows. By relabeling, the LCS of two independent uniformly random permutations has the same distribution as the LIS of a permutation drawn from the uniform measure. However, this argument no longer holds if the permutations are not drawn from the uniform distribution. Recently, the Mallows distribution on permutations has been the subject of much study in the context of monotone subsequences in permutations. The Mallows distribution weighs a permutation exponentially in a real parameter $q>0$ by the number of inversions in the permutation. Asymptotically, the LIS varies as $q$ varies as a function of $n$. When $n(1-q) \to \beta$ for a constant $\beta$, Mueller and Starr \cite{MuellerStarr} showed that, as in the uniform case when $q=1$, $LIS(\pi)/\sqrt n$ tends to a limiting constant for which they give an explicit formula as a function of $\beta$. On the other hand, Bhatnagar and Peled \cite{Naya} have shown that in the regime where $n(1-q) \to \infty$, the LIS scales as $n \sqrt{1-q}$, at the level of a weak law of large numbers. Mallows permutations have a regenerative structure and this has been exploited to show a central limit theorem for the LIS in the case when $q$ is constant. Recently, Pitman and Tang \cite{PitTan17} have extended some of the results on regeneration times for Mallows permutations to other families of distributions with regenerative structure.

To our knowledge, one of the first works studying the LCS for independent permutations drawn from a non-uniform distribution was by Jin \cite{KeEm,Ke2}. In \cite{KeEm} Jin defined a collection of points corresponding to the two permutations and showed that when permutations are drawn from the Mallows measure with parameters $q,q'$, in the regime that $n(1-q
) \to \beta$ and $n(1-q') \to \gamma$, the empirical distribution of the points converges to a density that can be written in terms of the density of Mallows distributed points which Starr derived in \cite{Starr}. Moreover, the LCS of the random permutations is given by the length of the LIS of this collection of points. Using this, \cite{Ke2} proved a weak law of large numbers for the LCS in the regime that $n(1-q
) \to \beta$ and $n(1-q') \to \gamma$. In this regime, the proof is based on obtaining estimates of the numbers of points in small boxes, along the lines of Deuschel and Zeitouni's \cite{DZ95} results on the LIS of iid point processes.

In this work, we obtain results on the aymptotics in the regime that at least one of the parameters, say $q$, is such that $n(1-q
) \to \infty$. In our first main result, we show a weak law for the LCS in this regime. We build on the work of \cite{Naya} where a weak law was shown for the LIS of a random Mallows permutation where $n(1-q
) \to \infty$. In that work, the weak law for the LIS followed from the observation that in thin strips, the points are distributed effectively as Mallows permutations with a parameter $q'$ such that $n(1-q') \to \beta$. Inside the strip, the weak law shown by Mueller and Starr \cite{MuellerStarr} can be applied to the points and since $q$ is small enough in this regime, the LIS can be shown to be approximated by the sum of the LISs in the strips. A similar strategy can be applied to the points in the box whose LIS gives the LCS. The main technical contribution here is the construction of a coupling that allows us to bound the LCS of two independent Mallows distributed permutations by the LIS of a Mallows distributed permutation and a combinatorial result (Lemma \ref{LE67}) that allows us to extend the inequality to the restriction of coupled permutations to a carefully chosen subsequence.

In our second main result, we show that when $q,q'$ are constant, the LCS when appropriately scaled converges to the Gaussian distribution. In this case, we build on the approach used in \cite{Basu} to show a Gaussian limit theorem for the LIS in the regime that $0< q<1$. The main contribution here is to provide estimates on the return times of a product chain which gives the length of the LCS in analogy to the how such estimates were used in \cite{Basu}. 

Below we introduce some notation and formally state the main results.

\begin{definition}
For any $\pi, \tau \in S_n$, define the length of the longest common subsequence of $\pi$ and $\tau$ as follows,
\begin{align*}
\lcs(\pi, \tau) \coloneqq \max(m : \exists\,i_1 < \cdots <& i_m \text{ and } j_1 < \cdots < j_m \\
&\text{ such that } \pi(i_k) = \tau(j_k)\text{ for all } k \in [m]).
\end{align*}
\end{definition}

\begin{definition}
Given $\pi \in S_n$, the inversion set of $\pi$ is defined by
\[
\IV(\pi) \coloneqq \{ (i, j) : 1 \le i < j \le n \text{ and } \pi(i) > \pi(j) \},
\]
and the inversion number of $\pi$, denoted by $\inv(\pi)$, is defined to be the cardinality of $\IV(\pi)$.
\end{definition}
The Mallows measure on $S_n$ is introduced by Mallows in \cite{mallows1957}. For $q > 0$, the $(n, q)$ - Mallows measure on $S_n$ is given by
\[
\m(\pi) \coloneqq \frac{q^{\inv(\pi)}}{Z_{n, q}},
\]
where $Z_{n, q}$ is the normalizing constant. In other words, under the Mallows measure with parameter $q>0$, the probability of a permutation $\pi$ is proportional to $q^{\inv(\pi)}$.

The first result in this paper is the $L_p$ convergence of the length of the longest common subsequence of two independent Mallows permutations with same parameter $q_n$, such that $\lim_{n \to \infty} q_n = 1$ and $\lim_{n \to \infty}n(1-q_n) = \infty$.

\begin{theorem}\label{M6}
Suppose $\{q_n\}$ is a sequence such that
\[
0 < q_n < 1, \quad \lim_{n \to \infty} q_n = 1 \quad\text{ and }\quad \lim_{n \to \infty}n(1-q_n) = \infty.
\]
For each $n$, define two independent random variables $\pi_n, \tau_n$ such that $\pi_n \sim \n$ and $\tau_n \sim \n$. Then, for any $0 < p < \infty$,
\[
\frac{\lcs(\pi_n, \tau_n)}{n\sqrt{1 - q_n}} \overset{L_p}{\longrightarrow} \frac{\sqrt{6}}{3},
\]
as $n$ tends to infinity.
\end{theorem}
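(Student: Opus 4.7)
The plan is to establish matching upper and lower bounds for $\lcs(\pi_n,\tau_n)/(n\sqrt{1-q_n})$ in probability and then upgrade to $L_p$ convergence. The starting point is the identification from \cite{KeEm} that $\lcs(\pi_n,\tau_n)$ equals the LIS of the matched-pair point configuration $P_n=\{(i,\tau_n^{-1}(\pi_n(i)))\colon i\in[n]\}\subset [n]^2$, so the argument reduces to studying the LIS of $P_n$. I would partition $[n]^2$ into $n/\ell_n$ horizontal strips of width $\ell_n$, choosing $\ell_n$ so that $\ell_n(1-q_n)\to\beta$ for a large constant $\beta$; at the end of the argument the parameter $\beta$ is sent to infinity to recover the stated constant $\sqrt{6}/3$. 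This mirrors the strip decomposition used in \cite{Naya} to pass from the Mueller--Starr regime to the Bhatnagar--Peled regime for the LIS of a single Mallows permutation.

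For the lower bound, the points of $P_n$ inside each strip behave, up to negligible error, like the matched-pair configuration of two independent Mallows permutations on $\ell_n$ symbols with parameter $\tilde q$ satisfying $\ell_n(1-\tilde q)\to\beta$, which is precisely the regime handled in \cite{Ke2}. Applying Jin's weak law strip by strip gives an LIS of size $d(\beta)\sqrt{\ell_n}+o(\sqrt{\ell_n})$ inside each strip with high probability, where $d(\beta)$ is the limiting LCS constant from \cite{Ke2}. Concatenating monotone paths across the $n/\ell_n$ strips then yields a lower bound of order $(d(\beta)/\sqrt\beta)\,n\sqrt{1-q_n}$; verifying the asymptotic $d(\beta)/\sqrt\beta\to\sqrt{2/3}=\sqrt{6}/3$ as $\beta\to\infty$ (which is forced by consistency with the global scaling and the corresponding LIS asymptotics in \cite{Naya}) closes the lower bound.

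For the upper bound, I would invoke the coupling announced in the introduction that constructs, on an enlarged probability space, a single Mallows permutation $\sigma_n$ such that $\lcs(\pi_n,\tau_n)$ is dominated by $\lis(\sigma_n)$ restricted to a prescribed subset of indices. Combined with the combinatorial Lemma~\ref{LE67}, which propagates this domination to the carefully chosen subsequences corresponding to each strip, this reduces the upper bound to controlling $\lis$ of Mallows permutations inside each strip, a task already handled by \cite{Naya}. Summing the strip-wise upper bounds and again sending $\beta\to\infty$ produces an upper bound that matches the lower bound, yielding convergence in probability.

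To upgrade to $L_p$ convergence for every $0<p<\infty$, I would note the deterministic bound $\lcs(\pi_n,\tau_n)\le n$ together with an upper-tail concentration estimate for $\lcs$, obtainable via a bounded-differences argument on the Mallows sampling procedure in the spirit of \cite{Basu}; this gives uniform integrability of all positive powers of the normalized variable, so $L_p$ convergence follows from convergence in probability. The main obstacle is the upper-bound coupling and its interaction with Lemma~\ref{LE67}: unlike the LIS of a single Mallows permutation, the LCS of two independent Mallows permutations does not carry an obvious regenerative structure, so the domination by a single Mallows LIS must be engineered explicitly, and ensuring that this domination remains compatible with the conditioning used inside each strip while absorbing the boundary effects between strips is the delicate technical heart of the argument.
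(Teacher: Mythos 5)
Your proposal follows essentially the same route as the paper: reduce $\lcs$ to the LIS of the paired configuration, decompose $[n]$ into blocks of size $\beta/(1-q_n)$, apply the finite-$\beta$ weak law of \cite{Ke2} inside each block, control the cross-block errors and the upper bound via the coupling of Lemma~\ref{L360} together with Lemma~\ref{LE67}, send $\beta\to\infty$ (indeed $2\bar J(\beta)/\sqrt\beta\to\sqrt6/3$), and upgrade to $L_p$ by uniform integrability. The only real deviation is your uniform-integrability step: the crude bound $\lcs\le n$ is useless against the normalization $n\sqrt{1-q_n}$, and rather than a bounded-differences argument you can simply reuse the coupling $\lis(\pi_n,\tau_n)\le\lis(Z_n)$ and import the uniform integrability of $|\lis(Z_n)/(n\sqrt{1-q_n})|^p$ already established in \cite{Naya}, which is what the paper does.
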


The proof of Theorem \ref{M6} follows the approach developed in \cite{Naya}, where the authors show a law of large numbers for the length of the longest increasing subsequence of Mallows permutation under a similar setting.

The second result in this paper is the following central limit theorem of the length of the LCS of two independent Mallows permutations with fixed parameters $q, q' \in (0, 1)$.
\begin{theorem}\label{M5}
Given $0 < q, q' < 1$, for each $n > 0$ define two independent random variables $\pi_n, \tau_n$ such that $\pi_n \sim \m$ and $\tau_n \sim \mu_{n, q'}$. There exist constant $\sigma = \sigma(q, q') > 0$ and $a = a(q, q') >0$ such that
\[
\frac{\lcs(\pi_n, \tau_n) - an}{\sigma \sqrt{n}} \overset{d}{\longrightarrow} \mathcal{N}(0, 1)
\]
as $n \to \infty$. Here $\overset{d}{\longrightarrow}$ denotes convergence in distribution and $\mathcal{N}(0, 1)$ denotes the standard Normal distribution.
\end{theorem}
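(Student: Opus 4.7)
The plan is to extend the regeneration-based argument of \cite{Basu} for the LIS of a single Mallows permutation to the LCS of two independent permutations, via a product chain that records simultaneous regeneration events of both permutations. When $q\in(0,1)$, the permutation $\pi_n\sim\m$ can be realized as the restriction to $[n]$ of an infinite Mallows permutation $\sigma^{\pi}:\mathbb{Z}_{+}\to\mathbb{Z}_{+}$, which has a sequence of \emph{regeneration times} $0=R_0^{\pi}<R_1^{\pi}<\cdots$ at which $\sigma^{\pi}$ preserves the block $(R_{i-1}^{\pi},R_i^{\pi}]$ setwise. The gaps $R_i^{\pi}-R_{i-1}^{\pi}$ are i.i.d.\ with exponential tails, and conditional on the block lengths the restrictions to consecutive blocks are independent Mallows permutations of those lengths. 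Analogous regeneration times $\{R_j^{\tau}\}$ govern the independent $\tau_n$ with parameter $q'$.

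Define the \emph{joint regeneration times} $0=T_0<T_1<T_2<\cdots$ as the successive integers appearing in both $\{R_i^{\pi}\}$ and $\{R_j^{\tau}\}$. The key combinatorial observation is that at any such $T$, both $\pi_n$ and $\tau_n$ preserve $\{1,\ldots,T\}$ setwise, so any common subsequence splits into its parts with values $\leq T$ and $>T$; iterating across all joint regeneration times $T_1,\ldots,T_{K_n}\leq n$ (with $K_n:=\max\{k:T_k\leq n\}$) yields the additive decomposition
\[
\lcs(\pi_n,\tau_n) \;=\; \sum_{k=1}^{K_n} X_k \;+\; B_n,
\]
where $X_k:=\lcs(\pi_n|_{(T_{k-1},T_k]},\tau_n|_{(T_{k-1},T_k]})$ and the boundary term $B_n\leq T_{K_n+1}-T_{K_n}$ accounts for the incomplete block near $n$. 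By the strong regenerative property, the pairs $(T_k-T_{k-1},X_k)$ are i.i.d., and $X_k\leq T_k-T_{k-1}$. To control block sizes, view the pair of residual-time chains $(U_t^{\pi},U_t^{\tau})$ as a Markov chain on $\mathbb{Z}_{\geq 0}^2$ whose returns to $(0,0)$ are precisely the $T_k$, and use the exponential tails of the marginal gaps together with a coupling or Lyapunov argument to conclude that $T_1-T_0$ has exponential moments; consequently so does $X_1$.

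With these ingredients, the theorem follows from the CLT for renewal--reward processes (or from Anscombe's theorem applied to a random-index sum of i.i.d.\ centered $L^2$ increments). Setting $a:=\mathbb{E}[X_1]/\mathbb{E}[T_1-T_0]>0$, the centered increments $Y_k:=X_k-a(T_k-T_{k-1})$ are i.i.d.\ with $\mathbb{E}[Y_k]=0$ and $\var(Y_k)<\infty$, and a standard argument yields
\[
\frac{\lcs(\pi_n,\tau_n)-an}{\sigma\sqrt{n}} \;\overset{d}{\longrightarrow}\; \mathcal{N}(0,1), \qquad \sigma^2 \;=\; \frac{\var(Y_1)}{\mathbb{E}[T_1-T_0]},
\]
with the boundary term absorbed because $T_{K_n+1}-T_{K_n}=O_p(\log n)=o_p(\sqrt{n})$. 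Positivity $\sigma>0$ follows from a direct non-degeneracy check on $Y_1$: for example, conditional on $T_1-T_0=2$, the value $X_1\in\{1,2\}$ depends on the local configuration of $\pi$ and $\tau$ with both outcomes occurring with positive probability. I expect the main obstacle to be the quantitative exponential tail on the joint return time $T_1$, which the introduction explicitly flags as the paper's principal technical contribution; since $q\neq q'$ in general, the two marginal renewal distributions differ, and establishing exponentially rare simultaneous regenerations requires care beyond the single-chain setting of \cite{Basu}.
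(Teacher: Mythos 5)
Your outline follows the same route as the paper: realize $\pi_n,\tau_n$ via two independent infinite Mallows (insertion) processes, take the \emph{joint} regeneration times at which both permutations fix an initial segment setwise, decompose the LCS additively over the resulting i.i.d.\ blocks, and conclude with Anscombe's theorem for the random-index sum. The block decomposition, the identification of the joint regeneration gaps with the return times of a product Markov chain on $\mathbb{Z}_{\ge 0}^2$, and the renewal--reward centering $Y_k - a(T_k-T_{k-1})$ are all exactly what the paper does.

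The one genuine gap is the step you yourself flag: you assert (but do not prove) exponential tails for the marginal gaps and propose an unspecified ``coupling or Lyapunov argument'' for exponential moments of the joint return time $T_1$, and you then lean on this twice --- for $\var(Y_1)<\infty$ and for the boundary bound $T_{K_n+1}-T_{K_n}=O_p(\log n)$. The paper shows you do not need exponential tails at all; finite second moments suffice, and they come essentially for free from the product structure. The product chain $M^{\otimes}_n=(M_n,M'_n)$ with $M_n=\max\{M_{n-1},Z_n\}-1$ has an explicit product-form stationary distribution $\nu_{i,j}$ inherited from the two marginal chains, so $\E_{0,0}R_0^+=1/\nu_{0,0}<\infty$ immediately, and Kac's formula $\E_{0,0}(R_0^+)^2=(2\E_{\nu}R_0+1)/\nu_{0,0}$ reduces the second moment to showing $\E_{\nu}R_0<\infty$. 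That in turn follows from a monotone coupling (increasing one coordinate of the starting state by $1$ increases the coordinate sum along the whole trajectory by at most $1$), which gives $\E_{i,j}R_0\le (i+j)\eta$ with $\eta=\max\{\E_{0,1}R_0,\E_{1,0}R_0\}$, and then summability of $\sum_i i\,\mu_i$ under the explicit geometric-type stationary marginals. The boundary term is likewise handled with only second moments, via the elementary fact that $\max_{1\le i\le n}W_i=o_p(\sqrt{n})$ for i.i.d.\ $W_i$ with $\E W_i^2<\infty$. If you replace your exponential-tail step with this Kac's-formula argument, your proof closes; as written, the central estimate is missing.
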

The proof of Theorem \ref{M5} is based on the approach developed in \cite{Basu} in which Basu and Bhatnagar prove a central limit theorem of the length of the longest increasing subsequence of Mallows permutation with fixed parameter $q \in (0, 1)$.

\section{Proof of Theorem \ref{M6}}
\subsection{$q$-Mallows process}
In this section we describe a random process on permutations which was known to Mallows \cite{mallows1957}, and is termed as $q$-Mallows process in \cite{Naya}.
Given $q>0$, the $q$-Mallows process is a permutation-valued stochastic process $(p_n)_{n\ge1}$, where $p_n \in S_n$. The process is initialized by setting $p_1$ to be the only permutation on one element. The process iteratively constructs $p_n$ from $p_{n-1}$ and an independent random variable $p_n(n)$ distributed as a truncated geometric random variables. Precisely, let $\{p_n(n)\}_{n \ge 1}$ be a sequence of independent random variables with the distributions
\[
\p(p_n(n) = j) = \frac{q^{j-1}}{1 + q + \cdots + q^{n-1}} = \frac{(1-q)q^{j-1}}{1 - q^n}, \quad \forall 1 \le j \le n.
\]
Each permutation $p_n$ is defined iteratively by
\[
p_n(i) = \begin{cases}
p_{n-1}(i), &\text{when } p_{n-1}(i) < p_n(n);\\
p_{n-1}(i) + 1, &\text{when } p_{n-1}(i) \ge p_n(n);\\
p_n(n), &\text{when } $i = n$.
\end{cases}
\]
The $q$-Mallows process constructed as above has the following property (cf. Lemma 2.1 in \cite{Naya}).

\begin{lemma}\label{LE61}
Let $q > 0$ and let $\{p_n\}_{n \ge 1}$ be the $q$-Mallows process. Then $p_n$ is distributed according to $\mu_{n, 1/q}$.
\end{lemma}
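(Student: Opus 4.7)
The plan is to proceed by induction on $n$. The base case $n = 1$ is immediate, since $S_1$ is a singleton and $\mu_{1, 1/q}$ is the point mass there. For the inductive step, I would assume $p_{n-1} \sim \mu_{n-1, 1/q}$ and establish the same distributional statement for $p_n$ by exploiting the fact that $p_n$ is a deterministic function of $p_{n-1}$ and the independent truncated-geometric variable $p_n(n)$.

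The core of the argument is a combinatorial accounting of how many inversions are introduced at step $n$. Conditioning on $p_n(n) = j$, the insertion rule increments every entry $p_{n-1}(i) \geq j$ by $1$ and leaves the remaining entries unchanged; in particular, the relative order of the first $n-1$ entries is preserved, so the inversions among positions $1, \ldots, n-1$ of $p_n$ are in bijection with those of $p_{n-1}$. The inversions of $p_n$ involving position $n$ are the pairs $(i, n)$ with $p_n(i) > j$, and since $\{p_n(1), \ldots, p_n(n-1)\} = [n] \setminus \{j\}$ there are exactly $n - j$ such values. Consequently
\[
\inv(p_n) \;=\; \inv(p_{n-1}) + (n - j).
\]

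Next, I would note that the deterministic map $(p_{n-1}, j) \mapsto p_n$ is a bijection from $S_{n-1} \times [n]$ onto $S_n$ (the inverse reads off $j = p_n(n)$, deletes position $n$, and decrements the remaining entries strictly greater than $j$). Combining this bijection with the independence of $p_n(n)$ and $p_{n-1}$, the inductive hypothesis, and the explicit law of $p_n(n)$ yields, for any $\pi \in S_n$ with $\pi(n) = j$,
\[
\p(p_n = \pi) \;=\; \frac{q^{-\inv(p_{n-1})}}{Z_{n-1,\, 1/q}} \cdot \frac{(1-q)\,q^{j-1}}{1 - q^n}.
\]

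To finish, I want this to equal $q^{-\inv(\pi)}/Z_{n,\,1/q}$. Substituting the inversion identity $\inv(\pi) = \inv(p_{n-1}) + (n-j)$ reduces the claim to the ratio identity
\[
\frac{Z_{n,\,1/q}}{Z_{n-1,\,1/q}} \;=\; \frac{q^{1-n}(1 - q^n)}{1 - q},
\]
which follows directly from the standard $q$-factorial formula $\sum_{\sigma \in S_m} x^{\inv(\sigma)} = \prod_{k=1}^m (1 - x^k)/(1 - x)$ specialized to $x = 1/q$. I do not anticipate any real obstacle; the only delicate point is bookkeeping the exponents of $q$ and keeping track that the target measure is $\mu_{n, 1/q}$ rather than $\mu_{n, q}$, a sign flip that is consistent with the fact that when $q < 1$ the truncated geometric biases $p_n(n)$ toward small values, which are precisely the insertions that create many new inversions.
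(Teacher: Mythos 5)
Your proof is correct. Note that the paper itself offers no proof of this lemma, deferring to Lemma 2.1 of \cite{Naya}; your inductive argument is the standard one and is complete: the inversion bookkeeping $\inv(p_n)=\inv(p_{n-1})+(n-j)$ conditioned on $p_n(n)=j$ is right, the map $(p_{n-1},j)\mapsto p_n$ is indeed a bijection $S_{n-1}\times[n]\to S_n$, and the ratio identity for $Z_{n,1/q}/Z_{n-1,1/q}$ checks out against the $q$-factorial formula. A marginally slicker packaging of the same computation, which the paper implicitly supports via Corollary \ref{C611}, avoids induction altogether: since the $p_i(i)$ are independent with $\p(p_i(i)=t)\propto q^{t-1}$, the probability of any realization is proportional to $q^{\sum_i p_i(i)}=q^{\binom{n+1}{2}-\inv(p_n)}\propto q^{-\inv(p_n)}$, and the map from $(p_1(1),\dots,p_n(n))$ to $p_n$ is a bijection onto $S_n$. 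Both routes rest on the same inversion identity, so this is a difference of presentation rather than of substance.
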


The next lemma says that $p_i(i)$ is determined by the value of $p_n$ on $[i]$.

\begin{lemma}\label{LE611}
For any $1 \le i \le n$, we have
\begin{equation}\label{eq:611}
i - p_i(i) = \sum_{t = 1}^i \mathds{1}\left(p_n(t) > p_n(i)\right).
\end{equation}
\end{lemma}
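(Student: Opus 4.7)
The plan is to reduce the right-hand side of \eqref{eq:611} to $i - p_i(i)$ by showing that the restriction of $p_n$ to positions in $[i]$ has the same order type as the permutation $p_i$. Concretely, I would establish the intermediate claim that for every $k$ with $i \le k \le n$ and every $t \in [i]$, one has $p_k(t) > p_k(i)$ if and only if $p_i(t) > p_i(i)$. Setting $k = n$ would then give
\[
\sum_{t=1}^i \mathds{1}\bigl(p_n(t) > p_n(i)\bigr) \;=\; \sum_{t=1}^i \mathds{1}\bigl(p_i(t) > p_i(i)\bigr),
\]
and since $p_i$ is a bijection from $[i]$ to $[i]$, the right-hand side counts the elements of $[i]$ strictly larger than $p_i(i)$, which is exactly $i - p_i(i)$.

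To prove the intermediate claim, I would proceed by induction on $k \ge i$; the base case $k = i$ is tautological. For the inductive step with $k > i$, I would apply the update rule defining the $q$-Mallows process: each value $p_{k-1}(s)$ with $s < k$ is either left fixed (when $p_{k-1}(s) < p_k(k)$) or increased by $1$ (when $p_{k-1}(s) \ge p_k(k)$). Applying this simultaneously to the two distinct values $p_{k-1}(t)$ and $p_{k-1}(i)$ for any $t \in [i-1]$ leaves only three possible scenarios, namely both increase, both stay, or only the larger of the two increases. In each of them the strict inequality between the two values is preserved, so $p_k(t) > p_k(i)$ iff $p_{k-1}(t) > p_{k-1}(i)$. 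Combined with the inductive hypothesis, this establishes the claim at step $k$.

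I do not anticipate a genuine obstacle: the argument is a short structural observation about the insertion procedure, and the only mild subtlety is confirming by the case analysis above that the relative order of any two values at positions in $[i]$ is invariant under all insertions at positions $i+1, \ldots, n$. The final counting step is immediate from $p_i$ being a permutation of $[i]$.
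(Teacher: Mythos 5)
Your proposal is correct and follows essentially the same route as the paper: the paper also counts, using that $p_i \in S_i$, and then invokes the fact that subsequent insertions preserve the relative order of the values at positions in $[i]$ (stated there in one line as ``the relative ordering of previous indices will not change by the following updates''). Your inductive case analysis simply supplies the detail behind that one-line assertion, and your complementary count of elements strictly greater than $p_i(i)$ is equivalent to the paper's count of elements at most $p_i(i)$.
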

\begin{proof}
By the definition of $q$-Mallows process, $p_i$ is a permutation in $S_i$. Hence we have
\begin{align*}
p_i(i) &= \sum_{t = 1}^i \mathds{1}\left(p_i(t) \le p_i(i)\right)\\
       &= \sum_{t = 1}^i \mathds{1}\left(p_n(t) \le p_n(i)\right)
\end{align*}
Here the last equality follows since the relative ordering of previous indices will not change by the following updates. Thus
\[
i - p_i(i) = \sum_{t = 1}^i 1 - \mathds{1}\left(p_n(t) \le p_n(i)\right) = \sum_{t = 1}^i \mathds{1}\left(p_n(t) > p_n(i)\right).
\]
\end{proof}

A direct corollary of Lemma \ref{LE611} is that the number of inversions of $p_n$ can be written as a function of $p_i(i)$.
\begin{corollary}\label{C611}
\begin{equation}
\inv(p_n) = \frac{(n+1)n}{2} - \sum_{i = 1}^n p_i(i)
\end{equation}
\end{corollary}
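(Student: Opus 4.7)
The plan is to simply sum the identity in Lemma \ref{LE611} over $i = 1, \ldots, n$ and reinterpret both sides. On the left, $\sum_{i=1}^n (i - p_i(i)) = \frac{n(n+1)}{2} - \sum_{i=1}^n p_i(i)$, which is exactly the right-hand side of the desired identity.

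On the right, I would observe that the double sum
\[
\sum_{i=1}^n \sum_{t=1}^i \mathds{1}\!\left(p_n(t) > p_n(i)\right)
\]
counts pairs $(t, i)$ with $1 \le t \le i \le n$ and $p_n(t) > p_n(i)$. The diagonal $t = i$ contributes nothing since the strict inequality $p_n(i) > p_n(i)$ fails, so the count reduces to pairs with $t < i$ and $p_n(t) > p_n(i)$. By the definition of $\IV(p_n)$, this is precisely $|\IV(p_n)| = \inv(p_n)$.

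Equating the two expressions yields $\inv(p_n) = \frac{(n+1)n}{2} - \sum_{i=1}^n p_i(i)$, as desired. There is no real obstacle here; the corollary is a one-line consequence of Lemma \ref{LE611}, and the only thing to be careful about is the handling of the diagonal term $t = i$ and the reindexing of the inversion set.
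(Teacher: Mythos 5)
Your proof is correct and is exactly the argument the paper intends: the corollary is stated without proof as a ``direct corollary'' of Lemma \ref{LE611}, obtained by summing that identity over $i$ and recognizing the resulting double sum (with the vacuous diagonal removed) as $\inv(p_n)$. Nothing further is needed.
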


\begin{lemma}\label{LE612}
For any $1 \le i \le n$, we have
\begin{equation}\label{eq:612}
p_n(i) = p_i(i) + n - i - \sum_{t = i+1}^n \mathds{1}\left(p_n(t) > p_n(i)\right).
\end{equation}
Moreover, if $k \in [n]\setminus \{p_n(t): i+1 \le t \le n\}$ satisfies the following equation,
\begin{equation}\label{eq:6120}
k = p_i(i) + n - i - \sum_{t = i+1}^n \mathds{1}\left(p_n(t) > k\right),
\end{equation}
then we have $k = p_n(i)$.
\end{lemma}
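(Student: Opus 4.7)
The plan is to prove the first displayed identity directly from Lemma~\ref{LE611}, and then to obtain the uniqueness statement by showing that an appropriate auxiliary function is strictly monotone on the indicated set.

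For the identity \eqref{eq:612}, the key observation is that any permutation $\pi \in S_n$ satisfies $\pi(i) = n - \sum_{t=1}^n \mathds{1}(\pi(t) > \pi(i))$, simply because $\pi$ takes each value in $[n]$ exactly once. Applied to $p_n$ and split at $t = i$, this reads
\[
p_n(i) = n - \sum_{t=1}^i \mathds{1}(p_n(t) > p_n(i)) - \sum_{t=i+1}^n \mathds{1}(p_n(t) > p_n(i)).
\]
Lemma~\ref{LE611} rewrites the first sum as $i - p_i(i)$, and rearranging yields \eqref{eq:612}.

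For the uniqueness claim, I would set $D := [n] \setminus \{p_n(t) : i+1 \le t \le n\}$ and define $g : D \to \mathbb{Z}$ by $g(k) := k + \sum_{t=i+1}^n \mathds{1}(p_n(t) > k)$. Then \eqref{eq:6120} is precisely the equation $g(k) = p_i(i) + n - i$, and the first part of the lemma gives $g(p_n(i)) = p_i(i) + n - i$. Since $p_n$ is a permutation, $p_n(i) \in D$. Hence it suffices to prove that $g$ is injective on $D$; I will prove the stronger claim that $g$ is strictly increasing on $D$.

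To do this, take $k < k'$ in $D$ and compute
\[
g(k') - g(k) = (k' - k) - \bigl|\{t \in [i+1, n] : k < p_n(t) \le k'\}\bigr|.
\]
The integer interval $(k, k']$ has cardinality $k' - k$, and because $k' \in D$ does not appear as any $p_n(t)$ with $t \ge i+1$, at most $k' - k - 1$ of these integers occur in $\{p_n(t) : i+1 \le t \le n\}$. Therefore $g(k') - g(k) \ge 1$, establishing strict monotonicity and hence injectivity. I do not expect any serious obstacle in this argument; the only subtlety worth flagging is that the hypothesis $k' \in D$ (not merely $k \in D$) is exactly what produces the strict gain in the cardinality estimate.
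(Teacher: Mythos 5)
Your proposal is correct and follows essentially the same route as the paper: the identity is obtained by splitting $p_n(i) = n - \sum_{t=1}^n \mathds{1}(p_n(t) > p_n(i))$ at $t=i$ and invoking Lemma~\ref{LE611}, and the uniqueness rests on the same counting fact that the interval $(k,k']$ contains $k'-k$ integers of which at most $k'-k-1$ can occur among the distinct values $\{p_n(t) : i+1 \le t \le n\}$ because $k'$ is excluded. The paper phrases this as a proof by contradiction after subtracting the two equations, whereas you package it as strict monotonicity of the auxiliary function $g$ on $D$; this is a purely presentational difference.
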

\begin{proof}
Since $p_n$ is a permutation in $S_n$, we have
\[
p_n(i) = n - \sum_{t = 1}^n \mathds{1}\left(p_n(t) > p_n(i)\right).
\]
Hence (\ref{eq:612}) follows from (\ref{eq:611}). We prove the second claim by contradiction. Suppose we have $k < k'$ with $k, k' \in [n]\setminus \{p_n(t): i+1 \le t \le n\}$ such that
\begin{align*}
k &= p_i(i) + n - i - \sum_{t = i+1}^n \mathds{1}\left(p_n(t) > k\right),\\
k' &= p_i(i) + n - i - \sum_{t = i+1}^n \mathds{1}\left(p_n(t) > k'\right).
\end{align*}
By subtracting these two equations, we have
\begin{equation}\label{eq:contra}
k' - k = \sum_{t = i+1}^n \mathds{1}\left( k < p_n(t) \le k'\right) = \sum_{t = i+1}^n \mathds{1}\left( k < p_n(t) \le k' - 1\right),
\end{equation}
where the last equality follows since $k' \notin \{p_n(t): i+1 \le t \le n\}$. (\ref{eq:contra}) is a contradiction because $\{p_n(t): i+1 \le t \le n\}$ are distinct numbers and there are only $k - j - 1$ integers within $(j, k-1]$.
\end{proof}

\subsection{Basic properties of Mallows permutation}
In this section, we list a couple of properties of Mallows permutation. The proofs of the following lemmas can be found in Section 2 in \cite{Naya}.
\begin{definition}\label{DE61}
Given $\pi \in S_n$, let $\pi^r$ denote the reversal of $\pi$ which is defined by $\pi^r(i) = \pi(n+1-i)$. Let $\bs{a} = (a_1, \ldots, a_k)$ be an increasing sequence of indices in $[n]$. Define $\pi(\bs{a}) \coloneqq (\pi(a_1), \ldots, \pi(a_k))$. Let $\pi_{\bs{a}}$ denote the induced permutation in $S_k$ where $\pi_{\bs{a}}(i) = j$ if $\pi(a_i)$ is the $j$-th smallest term in $\pi(\bs{a})$.
\end{definition}

\begin{lemma}\label{LE62}
For any $q > 0$, if $\pi \sim \m$ then $\pi^r \sim \mu_{n, 1/q}$ and $\pi^{-1} \sim \m$.
\end{lemma}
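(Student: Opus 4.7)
The plan is to reduce both claims to elementary identities about the inversion count and then push them through the explicit formula for the Mallows density. Since the map $\pi \mapsto \pi^r$ and $\pi \mapsto \pi^{-1}$ are both involutions on $S_n$, it suffices in each case to compute $\mathrm{inv}(\pi^r)$ and $\mathrm{inv}(\pi^{-1})$ as functions of $\mathrm{inv}(\pi)$ and then substitute into $\mu_{n,q}(\pi) = q^{\mathrm{inv}(\pi)}/Z_{n,q}$.

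For the reversal, I would first show the identity $\mathrm{inv}(\pi^r) = \binom{n}{2} - \mathrm{inv}(\pi)$. The argument is just a change of indices: a pair $1\le i<j\le n$ is in $\IV(\pi^r)$ iff $\pi(n+1-i) > \pi(n+1-j)$, and setting $i' = n+1-j$, $j' = n+1-i$ gives a pair $1\le i'<j'\le n$ with $\pi(i') < \pi(j')$, i.e.\ a non-inversion of $\pi$. Since the total number of ordered pairs $i<j$ is $\binom{n}{2}$, the claimed identity follows. Consequently, for any $\sigma \in S_n$,
\[
\p(\pi^r = \sigma) = \p(\pi = \sigma^r) = \frac{q^{\mathrm{inv}(\sigma^r)}}{Z_{n,q}} = \frac{q^{\binom{n}{2}-\mathrm{inv}(\sigma)}}{Z_{n,q}} \propto (1/q)^{\mathrm{inv}(\sigma)},
\]
so $\pi^r \sim \mu_{n, 1/q}$.

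For the inverse, the key claim is $\mathrm{inv}(\pi^{-1}) = \mathrm{inv}(\pi)$. I would prove this by exhibiting an explicit bijection between $\IV(\pi)$ and $\IV(\pi^{-1})$: the map $(i,j) \mapsto (\pi(j), \pi(i))$ sends a pair $i<j$ with $\pi(i) > \pi(j)$ to a pair $a < b$ (with $a = \pi(j)$, $b = \pi(i)$) satisfying $\pi^{-1}(a) = j > i = \pi^{-1}(b)$, and the map is clearly invertible. Once the identity of inversion counts is established, the same one-line calculation as above,
\[
\p(\pi^{-1} = \sigma) = \p(\pi = \sigma^{-1}) = \frac{q^{\mathrm{inv}(\sigma^{-1})}}{Z_{n,q}} = \frac{q^{\mathrm{inv}(\sigma)}}{Z_{n,q}},
\]
yields $\pi^{-1} \sim \mu_{n,q}$.

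There is no real obstacle here: the statement is a symmetry of the Mallows measure, and the content is entirely in the two combinatorial identities for $\mathrm{inv}$. The only thing to be careful about is the bookkeeping in the index substitution for the reversal claim, and the verification that the bijection used for the inverse claim actually lands in $\IV(\pi^{-1})$ and is injective. Both are routine.
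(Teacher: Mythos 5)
Your proof is correct, and the two inversion identities $\mathrm{inv}(\pi^r) = \binom{n}{2} - \mathrm{inv}(\pi)$ and $\mathrm{inv}(\pi^{-1}) = \mathrm{inv}(\pi)$ together with the involution trick are exactly the standard argument; the paper itself gives no proof here, deferring to Section 2 of the cited reference, which proceeds the same way. No gaps.
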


\begin{lemma}\label{LE63}
Let $\bs{a} = (a_1, \ldots, a_k)$ and $\bs{b} = (b_1, \ldots, b_l)$ be two increasing sequences of indices in $[n]$ such that $a_k < b_1$. If $\pi \sim \m$, then $\pi_{\bs{a}}$ and $\pi(\bs{b})$ are independent and $\pi(\bs{a})$ and $\pi_{\bs{b}}$ are independent.
\end{lemma}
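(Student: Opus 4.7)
The plan is to prove a stronger splitting property of the Mallows measure at the cut $m := b_1 - 1$ (so that $\bs{a} \subseteq [m]$ and $\bs{b} \subseteq [m+1, n]$) and then read off Lemma \ref{LE63} as a corollary. Decompose $\pi \sim \m$ into a triple $(V, \sigma_1, \sigma_2)$, where $V := \pi([m])$ is the set of values appearing in the first $m$ positions, $\sigma_1 \in S_m$ is the induced pattern $\pi_{[m]}$, and $\sigma_2 \in S_{n-m}$ is the induced pattern of $\pi$ on $[m+1, n]$. I will show that $V$, $\sigma_1$, and $\sigma_2$ are mutually independent, with $\sigma_1 \sim \mu_{m,q}$ and $\sigma_2 \sim \mu_{n-m,q}$.

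The splitting property is a direct computation from $\m(\pi) = q^{\inv(\pi)}/Z_{n,q}$ based on the additive decomposition
\[
\inv(\pi) = \inv(\sigma_1) + \inv(\sigma_2) + C(V),
\]
where $C(V) := |\{(u, v) : u \in V,\ v \in [n]\setminus V,\ u > v\}|$ depends only on the set $V$. The three summands correspond to inversions $(i, j)$ with both positions in $[m]$, both in $[m+1, n]$, and one in each block, respectively: induced patterns preserve relative order so the intra-block counts are $\inv(\sigma_1)$ and $\inv(\sigma_2)$, while the cross count is in bijection with pairs $(\pi(i), \pi(j)) = (u, v)$ with $u \in V$, $v \notin V$, $u > v$. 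Since each triple $(V_0, \tau_1, \tau_2)$ arises from a unique $\pi$, substituting gives
\[
\p(V = V_0,\ \sigma_1 = \tau_1,\ \sigma_2 = \tau_2) = \frac{q^{C(V_0)}\, Z_{m,q}\, Z_{n-m,q}}{Z_{n,q}} \cdot \frac{q^{\inv(\tau_1)}}{Z_{m,q}} \cdot \frac{q^{\inv(\tau_2)}}{Z_{n-m,q}},
\]
which is the required product form with the stated marginals.

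Given this splitting, the two independence claims follow immediately. The pattern $\pi_{\bs{a}} = (\sigma_1)_{\bs{a}}$ is a deterministic function of $\sigma_1$, while $\pi(\bs{b})$ is recovered from $(V, \sigma_2)$ by listing $[n] \setminus V$ in increasing order and reading off the entries at ranks $\sigma_2(b_1 - m), \ldots, \sigma_2(b_l - m)$. Independence of $\sigma_1$ from $(V, \sigma_2)$ therefore yields $\pi_{\bs{a}} \perp \pi(\bs{b})$. A symmetric argument identifies $\pi_{\bs{b}}$ as a function of $\sigma_2$ alone and $\pi(\bs{a})$ as a function of $(V, \sigma_1)$, giving $\pi(\bs{a}) \perp \pi_{\bs{b}}$. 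The only step that is not essentially a definition-chase is the inversion decomposition itself, but I expect no substantive obstacle here: once inversions are classified by which block each of their two positions lies in, the identity is pure bookkeeping.
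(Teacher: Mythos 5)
Your proof is correct and complete. The paper itself does not supply an argument for this lemma --- it defers to Section~2 of the reference \cite{Naya} --- so there is no in-paper proof to match line by line; your splitting argument at the cut $m = b_1 - 1$ is a valid self-contained substitute. The decomposition $\inv(\pi) = \inv(\sigma_1) + \inv(\sigma_2) + C(V)$ is exactly the bookkeeping you describe (inversions with both positions in $[m]$, both in $[m+1,n]$, or one in each), the map $\pi \mapsto (V, \sigma_1, \sigma_2)$ is a bijection, and the resulting product form of the joint law gives mutual independence with Mallows marginals for the two patterns; note that the normalization identity $\sum_{V_0} q^{C(V_0)} = Z_{n,q}/(Z_{m,q}Z_{n-m,q})$ needed to call the first factor a probability comes for free from summing to one, so no external $q$-binomial input is required. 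The final reductions $\pi_{\bs{a}} = (\sigma_1)_{\bs{a}}$ and $\pi(\bs{b}) = f(V,\sigma_2)$ (and symmetrically) are also right. For comparison, an alternative proof in the spirit of the machinery this paper \emph{does} develop in Section~2.1 goes through the $q$-Mallows process: by Lemma~\ref{LE611} the induced pattern of $p_n$ on $[m]$ is equivalent to knowing $p_1(1), \ldots, p_m(m)$, while by (the second part of) Lemma~\ref{LE612} the values $p_n(j)$ for $j > m$ are determined recursively by $p_{m+1}(m+1), \ldots, p_n(n)$ alone, and these two families of insertion ranks are independent by construction (one then transfers from $\mu_{n,1/q}$ to $\m$ via Lemma~\ref{LE62}). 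Your static computation avoids the process entirely and has the added benefit of exhibiting the exact distribution of the induced patterns, which is essentially the content of Lemma~\ref{LE64} as well.
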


\begin{lemma}\label{LE64}
Let $\bs{I} = (i, i+1, \ldots, i + m -1) \subset [n]$ be a sequence of consecutive indices. If $\pi \sim \m$, then $\pi_{\bs{I}} \sim \mu_{m, q}$ and $\pi_{\pi^{-1}(\bs{I})} \sim \mu_{m, q}$. Moreover, conditioned on $\pi^{-1}(\bs{I}) = E \subset [n]$, we still have $\pi_{E} \sim \mu_{m, q}$.
\end{lemma}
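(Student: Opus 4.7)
The plan is to derive all three claims from a single inversion-decomposition identity. First, partition $[n] = A \sqcup \bs{I} \sqcup C$ with $A = [i-1]$ and $C = \{i+m, \ldots, n\}$, and split $\inv(\pi)$ according to where the two indices of each inversion lie. The driving observation is that for cross pairs $(a,b)$ with $a \in A$ and $b \in \bs{I}$,
\[
\sum_{b \in \bs{I}} \mathds{1}(\pi(a) > \pi(b)) = \bigl|\{s \in S : s < \pi(a)\}\bigr|,
\]
where $S := \pi(\bs{I})$; this depends only on $S$ and $\pi(a)$, not on how the values of $S$ are ordered inside $\bs{I}$. The analogous identity holds for pairs in $\bs{I} \times C$, so
\[
\inv(\pi) = \inv(\pi_{\bs{I}}) + g\bigl(\pi|_{A \cup C}, S\bigr),
\]
where $g$ is insensitive to the arrangement of $S$ within positions $\bs{I}$.

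For the first claim, I would sum the Mallows weight $q^{\inv(\pi)}$ over all $\pi \in S_n$ with prescribed $\pi_{\bs{I}} = \sigma \in S_m$. The factor $q^{\inv(\sigma)}$ pulls out and the remaining sum is $\sigma$-independent, so $\p(\pi_{\bs{I}} = \sigma) \propto q^{\inv(\sigma)}$, which gives $\pi_{\bs{I}} \sim \mu_{m, q}$.

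The second claim reduces to the first via inversion. Writing $E = \pi^{-1}(\bs{I}) = \{e_1 < \cdots < e_m\}$, one checks the identity $(\pi^{-1})_{\bs{I}} = (\pi_E)^{-1}$: both sides send $k$ to the unique $j$ with $\pi(e_j) = i+k-1$. By Lemma \ref{LE62}, $\pi^{-1} \sim \mu_{n, q}$, so the first claim applied to $\pi^{-1}$ gives $(\pi^{-1})_{\bs{I}} \sim \mu_{m, q}$, and inverting once more via Lemma \ref{LE62} yields $\pi_E \sim \mu_{m, q}$.

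For the third claim, I would strengthen the first-claim argument: the decomposition applied to $\pi^{-1}$ gives $\inv(\pi^{-1}) = \inv((\pi^{-1})_{\bs{I}}) + g$, where $g$ is determined by $\pi^{-1}|_{A \cup C}$. Hence $(\pi^{-1})_{\bs{I}} \sim \mu_{m, q}$ conditional on the entire configuration $\pi^{-1}|_{A \cup C}$; since $E$ is a function of that conditioning, marginalization yields $(\pi^{-1})_{\bs{I}} \sim \mu_{m, q}$ conditional on $E$ alone, and inverting gives $\pi_E \sim \mu_{m, q}$ conditional on $E$. The main (minor) obstacle I anticipate is the bookkeeping around the identification $(\pi^{-1})_{\bs{I}} = (\pi_E)^{-1}$ and the conditional-vs-marginal accounting; everything else is direct counting plus two applications of Lemma \ref{LE62}.
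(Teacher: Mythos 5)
Your proof is correct: the inversion decomposition $\inv(\pi) = \inv(\pi_{\bs{I}}) + g(\pi|_{A\cup C}, S)$, the identity $(\pi^{-1})_{\bs{I}} = (\pi_E)^{-1}$, and the observation that conditioning on all of $\pi^{-1}|_{A\cup C}$ refines conditioning on $E$ are all sound. The paper itself omits the proof and defers to Section 2 of \cite{Naya}, where the argument is essentially the same inversion-counting factorization you give, so your route matches the intended one.
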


\subsection{Reducing LCS problem to LIS problem}
\begin{definition}\label{D3}
Given a set of points in $\mathbb{R}^2$: $\bs{z} = \{z_1, z_2, \ldots, z_n\}$, where $z_i = (x_i, y_i) \in \mathbb{R}^2$, we say that $(z_{i_1}, z_{i_2}, \ldots, z_{i_m})$ is an increasing subsequence if
\[
x_{i_j} < x_{i_{j+1}}, \quad y_{i_j} < y_{i_{j+1}}, \quad j = 1, 2, \ldots, m-1.
\]
Here we do not require $i_j < i_{j+1}$.
Let $\lis(\bs{z})$ denote the length of the longest increasing subsequence of $\bs{z}$.
\end{definition}
\begin{definition}\label{D31}
Given $\bs{a} = (a_1, \ldots, a_n) \in \mathbb{R}^n$, $\bs{b} = (b_1, \ldots, b_n)~\in~\mathbb{R}^n$, we say that $((a_{i_1}, b_{i_1}), (a_{i_2}, b_{i_2}), \ldots, (a_{i_m}, b_{i_m}))$ is an increasing subsequence between $\bs{a}$ and $\bs{b}$ if
\[
a_{i_j} < a_{i_{j+1}}, \quad b_{i_j} < b_{i_{j+1}}, \quad j = 1, 2, \ldots, m-1
\]
Here we do not require $i_j < i_{j+1}$.
Let $\lis(\bs{a}, \bs{b})$ denote the length of the longest increasing subsequence between $\bs{a}$ and $\bs{b}$.
\end{definition}
Note that Definition \ref{D31} allows us to define $\lis(\pi,\tau)$, the length of the longest increasing subsequence of two permutations, by regarding $\pi$ and $\tau$ as vectors in $\mathbb{R}^n$.
In \cite{Ke2}, we show the following lemma which let us reduce the LCS problem to LIS problem.
\begin{lemma}\label{LE65}
For any $\pi, \tau \in S_n$, $\lcs(\pi, \tau) = \lis(\pi^{-1}, \tau^{-1})$.
\end{lemma}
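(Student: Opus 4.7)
The plan is simply to unwind both definitions and verify that the two maxima are taken over identical sets of objects. By definition, $\lcs(\pi,\tau)$ is the largest $m$ for which there exist $i_1 < \cdots < i_m$ and $j_1 < \cdots < j_m$ with $\pi(i_k) = \tau(j_k)$ for every $k$. Setting $v_k := \pi(i_k) = \tau(j_k)$, the condition $\pi(i_k) = \tau(j_k)$ becomes $i_k = \pi^{-1}(v_k)$ and $j_k = \tau^{-1}(v_k)$, so the LCS description can be rephrased as: there exist distinct values $v_1,\ldots,v_m \in [n]$ (not necessarily listed in increasing order as elements of $[n]$) such that simultaneously
\[
\pi^{-1}(v_1) < \cdots < \pi^{-1}(v_m) \quad\text{and}\quad \tau^{-1}(v_1) < \cdots < \tau^{-1}(v_m).
\]

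Next, I would compare this to $\lis(\pi^{-1},\tau^{-1})$ as specified in Definition \ref{D31}, with $\bs{a} = \pi^{-1}$ and $\bs{b} = \tau^{-1}$ viewed as vectors in $\mathbb{R}^n$. There one seeks the longest sequence of indices $v_1,\ldots,v_m$, with no imposed ordering $v_j < v_{j+1}$, such that $\pi^{-1}(v_j) < \pi^{-1}(v_{j+1})$ and $\tau^{-1}(v_j) < \tau^{-1}(v_{j+1})$; since $\pi^{-1},\tau^{-1}$ are permutations, these $v_k$ are automatically distinct. This matches the reformulated LCS condition verbatim, so the two quantities are equal and $\lcs(\pi,\tau) = \lis(\pi^{-1},\tau^{-1})$.

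There is essentially no technical obstacle; the content of the lemma lies in the fact that Definition \ref{D31} drops the ordering requirement $i_j < i_{j+1}$ on the labels, which is precisely what allows the ``common value'' framework of LCS to be realized as a planar LIS problem on the point set $\{(\pi^{-1}(v),\tau^{-1}(v)) : v \in [n]\}$. This reformulation is the conceptual pivot that will let the paper carry over tools already developed for the LIS of Mallows permutations to the study of $\lcs(\pi_n,\tau_n)$.
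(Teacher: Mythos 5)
Your proof is correct: the bijective relabeling $v_k = \pi(i_k) = \tau(j_k)$ turns the LCS condition into exactly the defining condition of $\lis(\pi^{-1},\tau^{-1})$ under Definition \ref{D31}, with distinctness of the $v_k$ automatic from strict monotonicity. The paper itself does not reprove this lemma but cites \cite{Ke2}, where the argument is this same definitional unwinding, so your approach matches.
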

The next lemma, also proved (as Lemma 3.9) in \cite{Ke2}, will be used to establish various inequalities directly from the results in \cite{Naya}. It says that the $\lis$ of two independent Mallows permutation restricted to a given collection of indices is dominated by the $\lis$ of a single Mallows permutation restricted to the same indices.
\begin{lemma}\label{L360}
Given $\bs{a} = (a_1, a_2, \ldots, a_k)$, where $a_1 < \cdots < a_k$ and $a_i \in[n]$, for any $0 < q \le 1$ and any distribution $\nu$ on $S_k$, there exists a coupling $(X, Y, Z)$ such that the following holds,
\begin{itemize}
\item[(a)] $X$ and $Y$ are independent.
\item[(b)] $X \sim \m$, $Y \sim \nu$ and $Z \sim \m$.
\item[(c)] $\lis(X_{\bs{a}}, Y) \le \lis(Z_{\bs{a}})$.
\end{itemize}
\end{lemma}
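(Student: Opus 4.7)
The plan is to reduce the coupling statement to a scalar stochastic-domination inequality and then prove the latter by a telescoping adjacent-swap argument. By Strassen's theorem, producing a coupling $(X,Y,Z)$ with the prescribed marginals and with $\lis(X_{\bs{a}},Y)\le\lis(Z_{\bs{a}})$ pointwise is equivalent to showing that the scalar random variable $\lis(X_{\bs{a}},Y)$ is stochastically dominated by $\lis(Z_{\bs{a}})$. Since $X$ and $Z$ share the same marginal $\m$ and $Y$ is independent of $X$, conditioning on $Y=y$ reduces the task to showing, for each fixed $y\in S_k$ and $X\sim\m$ with $0<q\le 1$, that $\lis(X_{\bs{a}},y)$ is stochastically dominated by $\lis(X_{\bs{a}})$.

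To exploit the Mallows structure I would rewrite $\lis(X_{\bs{a}},y)$ as the LIS of the $k$-term sequence $\bigl(X(a_{y^{-1}(1)}),\ldots,X(a_{y^{-1}(k)})\bigr)$, i.e., the LIS of $X$ read along the index sequence $\bs{a}_y:=(a_{y^{-1}(1)},\ldots,a_{y^{-1}(k)})$, while $\lis(X_{\bs{a}})$ is the LIS of $X$ read along the sorted sequence $\bs{a}$. Any such $\bs{a}_y$ can be sorted into $\bs{a}$ by a sequence of adjacent swaps, each removing one inversion from the index sequence, so it suffices to prove the following one-step statement and iterate: if $\bs{b}=(b_1,\ldots,b_k)$ satisfies $b_i>b_{i+1}$ and $\bs{b}^{+}$ is obtained from $\bs{b}$ by swapping entries $i$ and $i+1$, then $\lis(X(\bs{b}))$ is stochastically dominated by $\lis(X(\bs{b}^{+}))$.

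For this adjacent-swap step, condition on the $\sigma$-field $\mathcal{F}$ generated by $\{X(j):j\notin\{b_i,b_{i+1}\}\}$ together with the unordered pair $\{X(b_i),X(b_{i+1})\}=\{v,w\}$ with $v<w$. Only two completions remain: case $(A)$ with $X(b_i)=v,\ X(b_{i+1})=w$, and case $(B)$ with $X(b_i)=w,\ X(b_{i+1})=v$. A direct count of how the number of inversions of $X$ changes as one passes from $(B)$ to $(A)$ --- one new inversion appears at the pair $(b_{i+1},b_i)$, and each index $j$ strictly between $b_{i+1}$ and $b_i$ whose value $X(j)$ lies strictly between $v$ and $w$ contributes two further new inversions, while all other indices cancel out --- yields
\[
\frac{\mathbb{P}(A\mid\mathcal{F})}{\mathbb{P}(B\mid\mathcal{F})} = q^{1+2N},\qquad N=\bigl|\{j:b_{i+1}<j<b_i,\ v<X(j)<w\}\bigr|,
\]
and since $q\le 1$ this gives $\mathbb{P}(A\mid\mathcal{F})\le\mathbb{P}(B\mid\mathcal{F})$. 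Writing $s_A,s_B$ for the value sequence $X(\bs{b})$ in the two cases, one checks that $X(\bs{b}^{+})=s_B$ in case $(A)$ and $X(\bs{b}^{+})=s_A$ in case $(B)$, and since $s_A$ has an ascent at positions $(i,i+1)$ while $s_B$ has a descent there, a standard exchange argument (any increasing subsequence of $s_B$ uses at most one of $\{i,i+1\}$ and can be re-routed through the other position to give an increasing subsequence of $s_A$ of the same length) gives $\lis(s_A)\ge\lis(s_B)$. The identity
\[
\mathbb{P}\bigl(\lis(X(\bs{b}^{+}))\ge t\mid\mathcal{F}\bigr) - \mathbb{P}\bigl(\lis(X(\bs{b}))\ge t\mid\mathcal{F}\bigr) = \bigl(\mathbb{P}(B\mid\mathcal{F})-\mathbb{P}(A\mid\mathcal{F})\bigr)\bigl(\mathbf{1}\{\lis(s_A)\ge t\}-\mathbf{1}\{\lis(s_B)\ge t\}\bigr)
\]
is then a product of two nonnegative quantities, so nonnegative; integrating over $\mathcal{F}$ finishes the one-swap domination, and iterating across swaps completes the lemma. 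The main obstacle is verifying the inversion-count identity $q^{1+2N}$, which is elementary but needs a careful case split on the position of each third index $j$ relative to $b_{i+1},b_i$ and on $X(j)$ relative to $v,w$; the Strassen reduction, the telescoping, and the LIS exchange are all routine.
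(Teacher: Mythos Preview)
Your argument is correct. The paper does not actually prove this lemma here; it cites it as Lemma~3.9 of \cite{Ke2}. However, the adjacent-swap/Bruhat-order machinery you use is exactly the machinery the paper deploys in its proof of Lemma~\ref{LE67} (see Claims~\ref{claim1} and~\ref{claim2} and Definition~\ref{DE2}), so your approach is very much in the same spirit.

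The one methodological difference worth noting is that you pass through Strassen's theorem, reducing the existence of the coupling to the scalar stochastic-domination inequality and then proving that inequality swap by swap. The paper, in the analogous setting of Lemma~\ref{LE67}, instead constructs an explicit coupling at each adjacent swap (Claim~\ref{claim2}: pair up permutations $\{\bs{b},\bs{b}\ccirc(v_j,v_k)\}$, flip a biased coin with head probability governed by the probability ratio, and assign $(\bar{p}_M,\hat{p}_M)$ accordingly) and then composes these couplings along a maximal chain in the weak Bruhat order. Both routes rest on the same two facts you isolated: the conditional probability ratio $q^{1+2N}\le 1$ for the two completions, and the deterministic inequality $\lis(s_A)\ge\lis(s_B)$ when the swap removes a descent. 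Your Strassen reduction is slightly slicker in that it avoids bookkeeping the composed coupling, at the cost of making the joint law of $(X,Y,Z)$ non-constructive; the paper's explicit construction is what is needed later when the coupling must additionally preserve the random set $\bar F=\hat F$ in Lemma~\ref{LE67}, but for Lemma~\ref{L360} itself your shortcut is perfectly adequate.
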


A special case of Lemma \ref{L360} is when we choose $\bs{a} = (1, 2, \ldots, n)$.  A direct consequence of Lemma \ref{L360} is that we can obtain some large deviation bounds for the LCS of two independent permutations at least one of which is Mallows distributed from the large deviation bounds for the LIS of a single Mallows permutation.

By Lemma \ref{L360}, for any $n \ge 1$, there exists a coupling $(\pi_n, \tau_n, Z_n)$ such that $\pi_n, \tau_n$ and $Z_n$ are all $\m$-distributed with $\pi_n, \tau_n$ being independent and
\begin{equation}\label{eq:699}
\lis(\pi_n, \tau_n) \le \lis(Z_n).
\end{equation}
In \cite{Naya} Section 5.1, the authors show that, given $p > 0$, when $q$ is sufficiently close to $1$, the family of random variables $\left\{\left|\frac{\lis(Z_n)}{n\sqrt{1-q}}\right|^p\right\}$ indexed by $q$ is uniformly integrable. Hence by (\ref{eq:699}), the family of random variables $\left\{\left|\frac{\lis(\pi_n, \tau_n)}{n\sqrt{1-q}}\right|^p\right\}$ is also uniformly integrable.
In the following we show that
\begin{equation}\label{eq:61}
\frac{\lis(\pi_n, \tau_n)}{n\sqrt{1 - q_n}} \overset{L_1}{\longrightarrow} \frac{\sqrt{6}}{3},
\end{equation}
as $n \to \infty$. Then, by the uniform integrability of $\left\{\left|\frac{\lis(\pi_n, \tau_n)}{n\sqrt{1-q}}\right|^p\right\}$, for any $p > 0$, we have
\begin{equation}\label{eq:613}
\frac{\lis(\pi_n, \tau_n)}{n\sqrt{1 - q_n}} \overset{L_p}{\longrightarrow} \frac{\sqrt{6}}{3},
\end{equation}
as $n \to \infty$. Therefore Theorem \ref{M6} follows from Lemma \ref{LE65} and the fact that $(\pi_n, \tau_n)$ has the same distribution as $(\pi^{-1}_n, \tau^{-1}_n)$.
The proof of (\ref{eq:61}) follows the approach developed in \cite{Naya} in which the authors prove a similar result for the length of the longest increasing subsequence of Mallows permutation.

\subsection{Block decomposition}\label{SE:bd}
Let $n = n(q)$ be a function of $q$ such that
\begin{equation}\label{eq:614}
\lim_{q \to 1} n = \infty,\quad \text{ and } \quad\lim_{q \to 1}n(1-q) = \infty.
\end{equation}
Let $\pi \sim \m$, $\tau \sim \m$ and $\pi$ and $\tau$ are independent. To prove (\ref{eq:61}), it suffices to show that
\begin{equation}\label{eq:64}
\frac{\lis(\pi, \tau)}{n\sqrt{1 - q}} \overset{L_1}{\longrightarrow} \frac{\sqrt{6}}{3},
\end{equation}
as $q \to 1$. In the following, we will partition $[n]$ into blocks of size $\frac{\beta}{1 - q}$ for some large $\beta$. Considering $\lis(\pi, \tau)$ when restricting $\pi$ and $\tau$ in each blocks, we will show that the concatenation of these increasing subsequences within each block is close to $\lis(\pi, \tau)$.

Given $\beta > 0$, define a function $\beta(q)$ such that $\frac{\beta(q)}{1-q}$ is an integer and $\beta(q) \to \beta$ as $q \to 1$. Define
\begin{equation}\label{eq:605}
m \coloneqq \left\lfloor \frac{n(1-q)}{\beta(q)}\right\rfloor.
\end{equation}
For $1 \le i \le m$ define
\[
B_i \coloneqq \left((i-1)\frac{\beta(q)}{1-q} + 1, \ldots, i \frac{\beta(q)}{1-q} \right).
\]
Hence, each $B_i$ is a block of consecutive integers of size $\frac{\beta(q)}{1-q}$. To make $\{B_i\}$ a partition of $[n]$, define $B_{m+1} \coloneqq \left(m \frac{\beta(q)}{1-q} + 1, \ldots, n\right)$. For $1 \le i \le m+1$, let
\[
X_i \coloneqq \lis(\pi_{B_i}, \tau_{B_i})
\]
be the LIS of the restriction of $\pi$ and $\tau$ to $B_i$ as defined in Definition \ref{D31}. By Lemma \ref{LE63}, the $X_i$ are independent. By Lemma \ref{LE64}, each $X_i$ has the distribution of the LIS of two independent Mallows permutations of size $\frac{\beta(q)}{1-q}$ and parameter $q$. Moreover, by Lemma \ref{LE62}, and using Lemma \ref{LE65}, $X_i$ has the distribution of the LCS of two independent Mallows permutations of size $\frac{\beta(q)}{1-q}$ and parameter $q$.
By the triangle inequality, we have
\[
\left|\frac{\lis(\pi, \tau)}{n\sqrt{1 - q}} - \frac{\sqrt{6}}{3}\right| \le 
\left|\frac{\lis(\pi, \tau) - \sum_{i = 1}^m X_i}{n\sqrt{1 - q}}\right| + 
\left|\frac{\sum_{i = 1}^m X_i}{n\sqrt{1 - q}} - \frac{\sqrt{6}}{3}\right|.
\]
We will prove that
\begin{equation}\label{eq:62}
\varlimsup_{\beta \to \infty}\varlimsup_{q \to 1} \E\left(\left|\frac{\lis(\pi, \tau) - \sum_{i = 1}^m X_i}{n\sqrt{1 - q}}\right|\right) = 0,
\end{equation}

\begin{equation}\label{eq:63}
\varlimsup_{\beta \to \infty}\varlimsup_{q \to 1} \E\left(\left|\frac{\sum_{i = 1}^m X_i}{n\sqrt{1 - q}} - \frac{\sqrt{6}}{3}\right|\right) = 0.
\end{equation}

These equalities imply that
\[
\varlimsup_{\beta \to \infty}\varlimsup_{q \to 1} \E\left(\left|\frac{\lis(\pi, \tau)}{n\sqrt{1 - q}} - \frac{\sqrt{6}}{3}\right|\right) = 0,
\]
and since $\pi$ and $\tau$ do not depend on $\beta$, we have
\[
\lim_{q \to 1} \E\left(\left|\frac{\lis(\pi, \tau)}{n\sqrt{1 - q}} - \frac{\sqrt{6}}{3}\right|\right) = 0
\]
which is exactly (\ref{eq:64}).

\subsection{Comparing $\lis(\pi, \tau)$ and $\sum X_i$}
Since $\{B_i\}$ partition $[n]$, it follows trivially that
\begin{equation}\label{eq:65}
\lis(\pi, \tau) \le \sum_{i= 1}^{m+1} X_i.
\end{equation}
We will show a bound in the other direction by using the $q$-Mallows process. Given two independent $q$-Mallows processes $\{p_i\}$ and $\{p'_i\}$, define two permutations $\pi$ and $\tau$ by
\begin{equation}\label{eq:651}
\pi(j)\coloneqq n + 1 - p_n(j),  \qquad \tau(j)\coloneqq n + 1 - p'_n(j),
\end{equation}
for $1 \le j \le n$. By Lemma \ref{LE61} and Lemma \ref{LE62}, it follows that $\pi \sim \m$ and $\tau \sim \m$. Let $a = a(\beta) > 0$ be any function of $\beta$ satisfying
\begin{equation}\label{eq:606}
a \to \infty \quad \text{ and } \quad \frac{a}{\beta} \to 0, \quad \text{ as } \beta \to \infty.
\end{equation}
For each $i \in [m]$ define
\[
E_i \coloneqq \left\{j \in B_i : p_{\max{B_i}}(j) \le \frac{a}{1-q}\right\}, \quad
F_i \coloneqq \left\{j \in B_i : p_j(j) > \frac{a}{1-q} \right\}.
\]
That is $E_i$ consists of those indices in $B_i$ at which the first $q$-Mallows process is at most $\frac{a}{1-q}$ after the entire block $B_i$ is assigned. $F_i$ consists of those indices in $B_i$ at which its initial position is greater than $\frac{a}{1-q}$. For the second $q$-Mallows process, we define $E'_i$ and $F'_i$ similarly.

Let $I_i = (i_1, \ldots, i_k) \subset B_i$ be the indices of an arbitrary longest increasing subsequence of $\pi$ and $\tau$ in the restriction of $B_i$. That is $\pi(i_j) < \pi(i_{j+1})$ and $\tau(i_j) < \tau(i_{j+1})$ for any $j \in [k-1]$. Note that by the definition of $X_i$, we have $|I_i| = X_i$. Define
\[
I'_i \coloneqq I_i \setminus (E_i \cup F_i \cup E'_i \cup F'_i).
\]
In other words, $I'_i$ is obtained by delete those indices in $E_i \cup F_i \cup E'_i \cup F'_i$ from $I_i$ without changing the ordering of the remaining indices in $I_i$. The definitions of $B_i$, $E_i$, $F_i$, $E'_i$ and $F'_i$ imply that the concatenation of $\{I'_i\}_{i \in [m]}$ is a set of indices along which defines an increasing subsequence of $\pi$ and $\tau$. To see this, suppose $j, k$ come from the same $I'_i$ with $j$ comes before $k$ in $I'_i$, then by the definition of $I'_i$ we have $\pi(j) < \pi(k)$ and $\tau(j) < \tau(k)$. On the other hand, suppose $j \in I'_{s}$ and $k \in I'_{t}$ with $s < t$. By the definition of $E_{s}$, $F_{t}$, we have
\[
p_k(k) \le \frac{a}{1-q} < p_{\max{B_{s}}}(j) \le p_k(j),
\]
which implies that $p_n(k) < p_n(j)$, thus $\pi(k) > \pi(j)$. The inequality $\tau(k) > \tau(j)$ follows from the similar argument. Hence
\begin{equation}\label{eq:66}
\lis(\pi, \tau) \ge \sum_{i = 1}^{m} |I'_i|
\end{equation}
Moreover, the definitions of $I_i$ and $I'_i$ imply that
\begin{equation}\label{eq:67}
X_i = |I_i| \le |I'_i| +  \sum_{A \in \{E_i, E'_i, F_i, F'_i\}}\lis(\pi_{A}, \tau_{A})
\end{equation}
for $1 \le i \le m$. From (\ref{eq:66}) and (\ref{eq:67}), we have
\begin{equation}\label{eq:68}
\lis(\pi, \tau) \ge \sum_{i = 1}^m X_i - \sum_{A \in \{E_i, E'_i, F_i, F'_i\}}\lis(\pi_{A}, \tau_{A}).
\end{equation}
Thus from (\ref{eq:65}) and (\ref{eq:68}), we get
\begin{equation}\label{eq:69}
\E\bigg(\bigg|\lis(\pi, \tau) - \sum_{i = 1}^m X_i\bigg|\bigg) \le \sum_{i = 1}^m \sum_{A \in \{E_i, E'_i, F_i, F'_i\}}\E\left(\lis(\pi_{A}, \tau_{A})\right) + \E(X_{m+1}).
\end{equation}
Therefore, (\ref{eq:62}) is a direct consequence of the next lemma.

\begin{lemma}\label{LE66}
\begin{equation}\label{eq:601}
\varlimsup_{\beta \to \infty}\varlimsup_{q \to 1} \E\left(\frac{X_{m+1}}{n\sqrt{1-q}}\right) = 0.
\end{equation}
\begin{equation}\label{eq:602}
\varlimsup_{\beta \to \infty}\varlimsup_{q \to 1} \frac{\sum_{i = 1}^m \E(\lis(\pi_{A_i}, \tau_{A_i}))}{n\sqrt{1-q}} = 0,
\end{equation}
for $A_i \in \{E_i, E'_i, F_i, F'_i\}$.
\end{lemma}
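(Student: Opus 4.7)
The plan is to reduce both parts to the weak law for the LIS of a single Mallows permutation (from \cite{Naya}) via Lemma \ref{L360} together with a restriction extension (the combinatorial Lemma \ref{LE67} advertised in the introduction). For (\ref{eq:601}), Lemma \ref{LE64} gives that $\pi_{B_{m+1}}$ and $\tau_{B_{m+1}}$ are independent $\mu_{|B_{m+1}|, q}$ permutations, and Lemma \ref{L360} applied with $\bs{a} = (1, \ldots, |B_{m+1}|)$ couples $X_{m+1}$ to a single $Z_{m+1} \sim \mu_{|B_{m+1}|, q}$ with $X_{m+1} \le \lis(Z_{m+1})$. Since $|B_{m+1}| \le \beta(q)/(1-q)$ and the block parameter satisfies $|B_{m+1}|(1-q) \le \beta(q) = O(1)$, the estimates on $\lis$ in \cite{Naya} give $\E(\lis(Z_{m+1})) \le C\sqrt{|B_{m+1}|/(1-q)} = C\sqrt{\beta(q)}/\sqrt{1-q}$. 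Dividing by $n\sqrt{1-q}$ produces a bound of order $\sqrt{\beta(q)}/(n(1-q))$, which vanishes as $q \to 1$ because $n(1-q) \to \infty$.

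For (\ref{eq:602}), by symmetry it suffices to treat $A_i = E_i$ and $A_i = F_i$. First I would establish size estimates. Since $p_{\max B_i}$ is a permutation of $[\max B_i]$, at most $a/(1-q)$ indices satisfy $p_{\max B_i}(j) \le a/(1-q)$, so $|E_i| \le a/(1-q)$ deterministically. For $F_i$, the indicators $\mathds{1}(j \in F_i)$ are independent across $j$ (they depend only on the independent truncated geometrics $p_j(j)$), and a direct calculation using $\p(p_j(j) > a/(1-q)) \le C e^{-a}$ for $j \ge a/(1-q)$, and $0$ otherwise, yields $\E|F_i| \le C\beta(q) e^{-a}/(1-q)$.

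Next I would apply Lemma \ref{L360} conditionally on the realization of $A_i$ (which depends only on one of the two $q$-Mallows processes, so the other permutation remains Mallows distributed and independent). Combined with Lemma \ref{LE67} to handle the non-consecutive restriction, this dominates $\lis(\pi_{A_i}, \tau_{A_i})$ by $\lis(Z_{A_i})$ for a coupled single Mallows $Z$; the concentration bounds of \cite{Naya} then supply $\E(\lis(Z_A)) \le C|A|\sqrt{1-q} + C\sqrt{|A|}$ for any realized $A$. For $E_i$ this gives $Ca/\sqrt{1-q}$ per block; summing over $m \le n(1-q)/\beta(q)$ blocks and dividing by $n\sqrt{1-q}$ yields $Ca/\beta(q)$, which vanishes as $\beta \to \infty$ by the hypothesis $a/\beta \to 0$ in (\ref{eq:606}). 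For $F_i$, Jensen's inequality yields $C\beta(q) e^{-a}/\sqrt{1-q}$ per block, and the same sum/divide manipulation produces $Ce^{-a}$, which vanishes as $a \to \infty$.

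The hardest step is the LIS comparison for non-consecutive subsets: because $E_i$ and $F_i$ are sets of non-consecutive indices within $B_i$, the induced permutation $Z_{A_i}$ is not itself Mallows distributed, so Lemma \ref{LE64} does not apply directly, and Lemma \ref{L360} as stated only delivers the bound for $\bs{a}$ given in advance (whereas $A_i$ is random and depends on the process). Bridging this gap is exactly what the combinatorial Lemma \ref{LE67} is designed for; once that extension is in hand, the LIS concentration estimates from Section~5 of \cite{Naya} supply the required $O(|A|\sqrt{1-q}+\sqrt{|A|})$ control on $\E(\lis(Z_A))$ and the remaining arithmetic is the routine summation performed above.
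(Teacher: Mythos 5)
Your overall strategy (reduce everything to the single-permutation LIS estimates of \cite{Naya} via Lemma \ref{L360}, then do the summation arithmetic) is the right one, your arithmetic for the last block and for the $F_i$ sums matches the paper's, and your treatment of $F_i$ via Lemma \ref{LE67} is exactly what the paper does. The genuine gap is in the $E_i$ case. You propose to handle the non-consecutive random set $E_i$ by ``Lemma \ref{L360} applied conditionally on the realization of $A_i$, combined with Lemma \ref{LE67}.'' But Lemma \ref{LE67} is built specifically for sets of the form $\{j \in B : p_j(j) > C\}$ — its coupling is constructed from the conditional law of the increments $p_j(j)$ given which of them exceed $C$ — and it says nothing about sets of the form $E_i = \{j \in B_i : p_{\max B_i}(j) \le a/(1-q)\}$. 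Nor can you simply condition on $E_i$ and apply Lemma \ref{L360}: conditioning on a permutation-dependent random index set destroys the Mallows structure of the restricted permutation in general, which is precisely why the paper needs a separate device here.

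That device is the enlargement $E_i \subset \bar{E}_i \coloneqq \sigma^{-1}(I)$ with $\sigma = p_{\max B_i}$ and $I = \big(1,\ldots,\lfloor a/(1-q)\rfloor\big)$ a \emph{consecutive block of values}. The last clause of Lemma \ref{LE64} then guarantees that, conditioned on the realization of $\bar{E}_i$, the induced permutation $\sigma_{\bar{E}_i}$ is exactly Mallows (of size $\lfloor a/(1-q)\rfloor$ and parameter $1/q$, hence Mallows with parameter $q$ after reversal), after which Lemma \ref{L360} and Theorem 1.3 of \cite{Naya} give the bound $ca/\sqrt{1-q}$ per block. Your deterministic observation $|E_i| \le a/(1-q)$ is the right size estimate but does not by itself produce a distributional statement you can feed into Lemma \ref{L360}. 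A secondary, smaller issue: the per-realization bound $\E(\lis(Z_A)) \le C|A|\sqrt{1-q} + C\sqrt{|A|}$ that you invoke for arbitrary non-consecutive realized sets $A$ is not available off the shelf from \cite{Naya}, whose Theorem 1.3 concerns genuine Mallows permutations; for the $F_i$ case the paper instead cites the estimate in Lemma 5.1 of \cite{Naya} that is proved for exactly these sets.
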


Before proving Lemma \ref{LE66}, we state the following technical lemma whose proof will be presented at the end of this section. Both Lemma \ref{LE67} and Lemma \ref{L360} will be used to reduce the claim in Lemma \ref{LE66} to the result of Lemma 5.1 in \cite{Naya}.

\begin{lemma}\label{LE67}
Given consecutive indices $B \subset [n]$, $0 < q < 1$ and any constant $C > 0$, there exists a coupling of $q$-Mallows processes $\{\bar{p}_i\}, \{p'_{i}\}$ and $\{\hat{p}_i\}$ such that
\begin{itemize}
\item $\{\bar{p}_i\}$ and $\{p'_{i}\}$ are independent.
\item Define $\pi(j)\coloneqq n + 1 - \bar{p}_n(j)$, $\tau(j)\coloneqq n + 1 - p'_n(j)$, $\hat{\pi}(j) \coloneqq n+1 - \hat{p}_n(j)$ and
\[
\bar{F} \coloneqq \left\{ j \in B : \bar{p}_j(j) > C \right\}, \quad \hat{F} \coloneqq \left\{ j \in B : \hat{p}_j(j) > C \right\}.
\]
Then, we have $\bar{F} = \hat{F}$ and $\lis(\pi_{\bar{F}}, \tau_{\bar{F}}) \le \lis(\hat{\pi}_{\bar{F}})$.
\end{itemize}
\end{lemma}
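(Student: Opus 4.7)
My approach has two stages: first arrange $\bar F = \hat F$, then use Lemma~\ref{L360} to secure the LIS inequality. The latter inequality is nontrivial because, by Definition~\ref{D31}, $\lis(\sigma, \sigma) = |\sigma|$ for any permutation $\sigma$ (since indices may be reordered), whereas $\lis(\sigma)$ as a single-permutation LIS may be much smaller; in particular, the trivial coupling $\hat p := \bar p$ \emph{fails}, and $\hat\pi$ must genuinely dominate both $\pi_{\bar F}$ and $\tau_{\bar F}$ in the LIS sense.

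For Stage~1, I take $\{\bar p_i\}$ and $\{p'_i\}$ to be two independent $q$-Mallows processes, yielding $\pi$ and $\tau$. Define $\{\hat p_i\}$ so that $\hat p_j(j) := \bar p_j(j)$ for every $j \in B$; the initial positions $\hat p_j(j)$ for $j \notin B$ are to be specified in Stage~2 as jointly independent truncated geometric random variables (possibly coupled nontrivially to $\bar p$ and $p'$). Because $\bar F$ and $\hat F$ depend only on the initial positions of the respective process on $B$, this choice immediately yields $\bar F = \hat F$, and $\{\hat p_i\}$ remains marginally a $q$-Mallows process.

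For Stage~2, I condition on $\mathcal G := \sigma(\{\bar p_j(j) : j \in B\})$, fixing $\bar F = E$. I apply Lemma~\ref{L360} with $\bs a = E$ and $\nu$ equal to the law of $\tau_E$ under $\tau \sim \mu_{n,q}$ (which is unchanged under conditioning because $\tau \perp \mathcal G$). This produces a $\mu_{n, q}$-distributed permutation $Z$ coupled to $(\pi, \tau_E)$ with $\lis(\pi_E, \tau_E) \le \lis(Z_E)$. Setting $\hat\pi := Z$ and extracting the remaining initial positions $\hat p_j(j)$ for $j \notin B$ from $\hat\pi$ via Lemmas~\ref{LE611}--\ref{LE612}, the construction of $\{\hat p_i\}$ is complete and the desired LIS bound holds.

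The main obstacle is twofold. First, Lemma~\ref{L360} requires $X \sim \mu_{n, q}$ \emph{unconditionally}, whereas conditioning on $\mathcal G$ alters the law of $\pi$. Second, taking $\hat\pi := Z$ naively conflicts with the Stage~1 constraint $\hat p_j(j) = \bar p_j(j)$ on $B$, since the $Z$ supplied by Lemma~\ref{L360} has its own unconstrained initial positions on $B$. I would resolve both difficulties by working with an auxiliary sample $\pi^\sharp \sim \mu_{n, q}$ and applying Lemma~\ref{L360} to $(\pi^\sharp, \tau_E)$; then, by the iterative structure of the $q$-Mallows process (Lemmas~\ref{LE611}--\ref{LE612}) together with the induced-permutation symmetries (Lemmas~\ref{LE62}--\ref{LE64}), couple $\pi^\sharp$ with $\pi$ so that their induced permutations on $E$ coincide almost surely conditional on $\mathcal G$, because conditional on $\mathcal G$ these induced permutations share the same law. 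A parallel argument allows $Z$ to be realized within the Lemma~\ref{L360} coupling so that its initial positions on $B$ match $\bar p$'s, reconciling the Stage~1 and Stage~2 constraints while preserving the LIS bound. Verifying the compatibility of these couplings is the principal technical ingredient of the proof.
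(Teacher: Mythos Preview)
Your Stage~1 constraint is fatal, and for a reason you actually anticipated. Because $B$ is a block of \emph{consecutive} indices, the induced permutation $(\bar p_n)_B$ is completely determined by the initial positions $\{\bar p_j(j) : j \in B\}$: for $j<k$ in $B$ one has $\bar p_n(j)<\bar p_n(k)$ iff $\bar p_k(j)<\bar p_k(k)$, and $\bar p_k(j)$ is obtained from $\bar p_j(j)$ by the recursion $\bar p_{l+1}(j)=\bar p_l(j)+\mathds{1}(\bar p_{l+1}(l+1)\le \bar p_l(j))$, which involves only indices in $B$. Hence imposing $\hat p_j(j)=\bar p_j(j)$ for all $j\in B$ forces $(\hat p_n)_B=(\bar p_n)_B$ and therefore $\hat\pi_{\bar F}=\pi_{\bar F}$. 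Your desired inequality then reads $\lis(\pi_{\bar F},\tau_{\bar F})\le \lis(\pi_{\bar F})$, which is precisely the ``trivial coupling $\hat p:=\bar p$'' failure you flagged at the outset. No choice of $\hat p_j(j)$ for $j\notin B$ can repair this, since those values do not affect $\hat\pi_{\bar F}$.

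The proposed rescue via $\pi^\sharp$ and Lemma~\ref{L360} does not escape this. Conditional on $\mathcal G=\sigma(\{\bar p_j(j):j\in B\})$, the permutation $\pi_E$ is \emph{deterministic}, so it cannot share a law with $(\pi^\sharp)_E$ for an independent $\pi^\sharp\sim\mu_{n,q}$; your coupling of $\pi^\sharp$ to $\pi$ on $E$ is impossible. Likewise, realizing the $Z$ of Lemma~\ref{L360} with initial positions on $B$ matching $\bar p$'s forces $Z_E=\pi_E$, again collapsing the bound. The paper's proof does not attempt to invoke Lemma~\ref{L360} here. Instead it works conditionally on $F=\bs v$ and proves a pointwise probability comparison (Claim~\ref{claim1}): swapping the values of $p_M$ at two positions of $\bs v$ so as to increase inversions never decreases the conditional probability, and preserves the event $\{F=\bs v\}$. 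This yields, for each adjacent transposition raising $\kappa$ in the left weak Bruhat order, a one-step coupling of two $q$-Mallows processes with $\bar F=\hat F$ and the LIS inequality maintained (Claim~\ref{claim2}); chaining these along a saturated chain from $p'_M=\kappa$ up to $id_M^r$ gives the full coupling. The essential idea you are missing is that $\hat p$ must \emph{differ} from $\bar p$ on $B$ while still satisfying $\hat F=\bar F$, and the Bruhat-order construction provides exactly the controlled perturbations that achieve this.
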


\begin{proof}[Proof of Lemma \ref{LE66}]
To show (\ref{eq:601}), we define $X$ to be a random variable which has the same distribution as $\lis(\pi_{B_{m+1}})$. By Lemma~\ref{L360}, letting $\bs{a} = B_{m+1}$, we have
\[
\E(X_{m+1}) = \E(\lis(\pi_{B_{m+1}}, \tau_{B_{m+1}})) \le \E(X),
\]
and (\ref{eq:601}) follows from the first equation in Lemma 5.1 in \cite{Naya}.

To prove (\ref{eq:602}), by symmetry, we only need to show (\ref{eq:602}) holds when $A_i = E_i, F_i$. For the case when $A_i = E_i$, define
\begin{equation}\label{eq:603}
I \coloneqq \textstyle\big(1, 2, \ldots, \big\lfloor\frac{a}{1-q}\big\rfloor \big), \quad \sigma \coloneqq p_{\max{B_i}}, \quad \sigma' \coloneqq p'_{\max{B_i}}, \quad \bar{E_i} \coloneqq \sigma^{-1}(I).
\end{equation}
We have
\begin{align}
\lis(\pi_{E_i}, \tau_{E_i}) \le \lis(\pi_{\bar{E}_i}, \tau_{\bar{E}_i}) &= \lis((p_n)_{\bar{E}_i}, (p'_n)_{\bar{E}_i}) \nonumber \\
 &= \lis(\sigma_{\bar{E}_i}, \sigma'_{\bar{E}_i}) \nonumber \\
 & = \lis((\sigma_{\bar{E}_i})^r, (\sigma'_{\bar{E}_i})^r) \label{eq:604}
\end{align}
By Lemma \ref{LE64} and (\ref{eq:603}), conditioned on the value of $\bar{E}_i$, we have $\sigma_{\bar{E}_i} \sim \mu_{\left\lfloor\frac{a}{1-q}\right\rfloor, 1/q}$. By Lemma \ref{LE62}, we have $(\sigma_{\bar{E}_i})^r \sim \mu_{\left\lfloor\frac{a}{1-q}\right\rfloor, q}$. Moreover, conditioned on the value of $\bar{E}_i$, $\,(\sigma_{\bar{E}_i})^r$ and $(\sigma'_{\bar{E}_i})^r$ are independent. Thus, by choosing $\bs{a} = I$ in Lemma \ref{L360}, there exists a random variable $Z$ with $Z \sim \mu_{\left\lfloor\frac{a}{1-q}\right\rfloor, q}$ such that
\[
\lis((\sigma_{\bar{E}_i})^r, (\sigma'_{\bar{E}_i})^r) \le \lis(Z).
\]
Hence it follows from (\ref{eq:604}) that $\lis(\pi_{E_i}, \tau_{E_i}) \le \lis(Z)$. For any $a > 5$, since $0 < q < 1$, we have $\big\lfloor\frac{a}{1-q}\big\rfloor > 5$. Thus
\[
1 - \frac{4}{\big\lfloor\frac{a}{1-q}\big\rfloor} \ge 1 - \frac{5}{\frac{a}{1-q}} > q
\]
Hence, by Theorem 1.3 in \cite{Naya}, there exists a constant $c$ such that
\[
\E(\lis(\pi_{E_i}, \tau_{E_i})) \le \E(\lis(Z)) \le c\left\lfloor\frac{a}{1-q}\right\rfloor\sqrt{1 - q} \le \frac{ca}{\sqrt{1-q}}.
\]
Hence, from the definition of $m$ in (\ref{eq:605}) and the property of $a$ as defined in (\ref{eq:606}), it follows that
\begin{align*}
& \varlimsup_{\beta \to \infty}\varlimsup_{q \to 1} \frac{\sum_{i = 1}^m \E(\lis(\pi_{E_i}, \tau_{E_i}))}{n\sqrt{1-q}} 
\le \varlimsup_{\beta \to \infty}\varlimsup_{q \to 1} \frac{mca}{n(1-q)}\\
\le\,& \varlimsup_{\beta \to \infty}\varlimsup_{q \to 1}\frac{ca}{\beta(q)} = \varlimsup_{\beta \to \infty} \frac{ca}{\beta} = 0,
\end{align*}
which completes the proof of (\ref{eq:602}) when $A_i = E_i$. For the case when $A_i = F_i$, by Lemma \ref{LE67}, there exists a coupling such that
\begin{equation}
\E(\lis(\pi_{F_i}, \tau_{F_i})) \le \E(\lis(\hat{\pi}_{F_i}))
\end{equation}
The claim follows directly from the third equation in Lemma 5.1 in \cite{Naya}.
\end{proof}

Next we establish (\ref{eq:63}), which combined with (\ref{eq:62}) implies (\ref{eq:64}), which completes the proof of Theorem \ref{M6}. We rely on the following result in \cite{Ke2}, in which a weak law of large numbers of the $\lcs$ of two independent Mallows permutations is established in the regime where $n(1-q)$ has finite limit as $n$ tends to infinity.

\begin{theorem}\label{M3}
Suppose that $\{q_n\}$ is a sequence such that $\lim_{n \to \infty}n(1 - q_n) = \beta \in \mathbb{R}$. Define independent Mallows permutations $\pi_n \sim \n$ and $\tau_n \sim \n$. For any $\epsilon > 0$, we have
\[
\lim_{n\to\infty}\p\left(\left|\frac{\lcs(\pi_n, \tau_n)}{\sqrt{n}} - 2\bar{J}(\beta)\right| < \epsilon \right) = 1,
\]
where
\begin{equation} \label{eq:m3}
\bar{J}(\beta) = {\textstyle\sqrt{\frac{\beta}{6 \sinh{(\beta/2)}}}}\cdot\int_0^1 \sqrt{ \cosh{(\beta/2)} + 2 \cosh{\big(\beta[2x - 1]/2\big)}}\, dx.
\end{equation}
\end{theorem}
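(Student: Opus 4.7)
My strategy is to convert the LCS problem into an LIS problem for a random point cloud in $[0,1]^2$, identify the cloud's limiting intensity, and then apply a Deuschel--Zeitouni-type variational principle to read off the constant $2\bar{J}(\beta)$.

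For each $k \in [n]$, set $z_k^{(n)} := \bigl(\pi_n^{-1}(k)/n,\ \tau_n^{-1}(k)/n\bigr) \in [0,1]^2$ and $\zeta_n := \{z_k^{(n)} : k \in [n]\}$. A common subsequence of $(\pi_n, \tau_n)$ of length $m$ is the same data as a choice of values $k_1 < \cdots < k_m$ whose positions in both permutations are simultaneously increasing, so in the sense of Definition~\ref{D3} we have $\lcs(\pi_n, \tau_n) = \lis(\zeta_n)$. By Lemma~\ref{LE62} the inverses $\pi_n^{-1}, \tau_n^{-1}$ remain independent and $\mu_{n,q_n}$-distributed. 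Applying Starr's limit theorem for the joint empirical distribution of $(k/n,\, \sigma^{-1}(k)/n)$ under $\sigma \sim \mu_{n,q_n}$ (with limiting density $u(\cdot,\cdot;\beta)$) to each permutation and invoking independence, I will show that the empirical measure of $\zeta_n$ converges weakly in probability to the density
\begin{equation*}
\rho(x,y;\beta) \ :=\ \int_0^1 u(s,x;\beta)\, u(s,y;\beta)\, ds,
\end{equation*}
obtained by integrating out the shared label $s \approx k/n$.

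The heart of the argument is to establish the variational formula
\begin{equation*}
\frac{\lis(\zeta_n)}{\sqrt{n}} \ \xrightarrow{\ \mathbb{P}\ }\ 2\sup_{\phi}\int_0^1 \sqrt{\rho(\phi_1(t),\phi_2(t);\beta)\, \phi_1'(t)\, \phi_2'(t)}\, dt,
\end{equation*}
with $\phi=(\phi_1,\phi_2)\colon[0,1]\to[0,1]^2$ ranging over monotone curves from $(0,0)$ to $(1,1)$. I will partition $[0,1]^2$ into $\varepsilon$-boxes, check that the number of points of $\zeta_n$ in each box is asymptotic to $n\int_{\mathrm{box}}\rho$, and use the local independence properties of Mallows permutations (Lemmas~\ref{LE63}--\ref{LE64}, applied to $\pi_n^{-1}$ and $\tau_n^{-1}$) to argue that the LIS contribution from each box behaves as that of a locally uniform/Poisson cloud of the same intensity. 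Summing box estimates via a superadditivity and concentration argument, then sending $\varepsilon\to 0$, yields the displayed formula. To evaluate the supremum, Cauchy--Schwarz applied to the defining integral of $\rho$ gives $\rho(x,y)^2 \le \rho(x,x)\rho(y,y)$, which combined with AM--GM on $\sqrt{\phi_1'\phi_2'}$ forces the maximizer to lie on the diagonal $\phi_1=\phi_2$; a change of variables then collapses the integral to $\int_0^1 \sqrt{\rho(x,x;\beta)}\,dx$. A direct calculation with Starr's explicit density $u$ produces
\[
\rho(x,x;\beta) \ =\ \tfrac{\beta}{6\sinh(\beta/2)}\bigl(\cosh(\beta/2)+2\cosh(\beta(2x-1)/2)\bigr),
\]
so the limiting constant is precisely $2\bar{J}(\beta)$ as in~\eqref{eq:m3}.

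\textbf{Main obstacle.} The principal difficulty is the variational identification of $\lis(\zeta_n)/\sqrt n$: the points of $\zeta_n$ inherit the long-range dependence of two Mallows permutations, so the standard Deuschel--Zeitouni theorem for iid/Poisson point clouds does not apply off the shelf. The resolution, developed in \cite{KeEm}, is to exploit the block-independence and local Mallows structure of both $\pi_n^{-1}$ and $\tau_n^{-1}$ so as to approximate $\zeta_n$ on microscopic boxes by (nearly) iid Mallows clouds of essentially constant intensity, and then pass to the continuum variational problem by refining the boxes.
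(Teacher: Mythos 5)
Your proposal is sound and follows essentially the route the paper itself points to: Theorem \ref{M3} is not proved in this paper but imported from \cite{Ke2}, whose argument — as summarized in the introduction — is exactly your plan of reducing the LCS to the LIS of the point cloud $\{(\pi_n^{-1}(k)/n,\tau_n^{-1}(k)/n)\}$, identifying the limiting intensity via Starr's density, and running a Deuschel--Zeitouni box/variational analysis. Your added observation that Cauchy--Schwarz plus AM--GM collapses the variational problem to the diagonal, yielding $\int_0^1\sqrt{\rho(x,x;\beta)}\,dx=\bar J(\beta)$, is consistent with the stated formula \eqref{eq:m3}.
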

First we show that
\begin{equation}\label{eq:697}
\lim_{\beta \to \infty} \frac{\bar{J}(\beta)}{\sqrt{\beta}} = \frac{1}{\sqrt{6}}.
\end{equation}
Since $\lim_{x \to \infty} \coth(x) = 1$, by (\ref{eq:m3}), it suffices to show
\begin{equation}\label{eq:696}
\lim_{\beta \to \infty}\int_0^1 \sqrt{ 1 + 2 \cosh{\big(\beta[2x - 1]/2\big)}\big/\cosh{(\beta/2)}}\, dx = 1
\end{equation}
Note that
\begin{align*}
  & 1 + 2\cdot \frac{\cosh{\big(\beta[2x - 1]/2\big)}}{\cosh{(\beta/2)}}\\
=\,  &1 + 2 \cdot \frac{e^{\beta(2x-1)/2} + e^{-\beta(2x-1)/2}}{e^{\beta/2} + e^{-\beta/2}}\\
=\,& 1 + 2 \cdot \frac{e^{\beta(x-1)} + e^{-\beta x}}{1 + e^{-\beta}} < 1 + 2(1 + 1) = 5
\end{align*}
for any $x \in [0, 1]$ and $\beta > 0$. Hence, by dominated convergence theorem, we have
\begin{align*}
& \lim_{\beta \to \infty}\int_0^1 \sqrt{ 1 + 2 \cosh{\big(\beta[2x - 1]/2\big)}\big/\cosh{(\beta/2)}}\, dx\\
=\, & \int_0^1 \lim_{\beta \to \infty}\sqrt{ 1 + 2 \cosh{\big(\beta[2x - 1]/2\big)}\big/\cosh{(\beta/2)}}\, dx\\
=\, & \int_0^1 1 \,dx = 1.
\end{align*}
(\ref{eq:696}) as well as (\ref{eq:697}) follow.

We continue with the notation defined in Section \ref{SE:bd}. Suppose $n = n(q)$ is such that (\ref{eq:614}) holds. Recall that $X_1$ denotes the length of the LIS of two independent Mallows permutations with the same distribution $\mu_{\frac{\beta(q)}{(1-q)}, q}$. Since
\[
\lim_{q \to 1}\frac{\beta(q)}{1-q}\cdot(1-q) = \beta,
\]
we can apply Theorem \ref{M3} and Lemma \ref{LE65} to $X_1$ and deduce that
\begin{equation}\label{eq:695}
\sqrt{\frac{1 - q}{\beta(q)}}\cdot X_1 \overset{p}{\longrightarrow} 2\bar{J}(\beta).
\end{equation}
Now fix $\beta_0$ sufficiently large and $q_0$ sufficiently close to $1$ such that $\beta > \beta_0$ and $q_0 \le q < 1$ imply $\frac{1}{2} < q < 1 - \frac{4(1-q)}{\beta(q)}$. By (68) in \cite{Naya} and Lemma \ref{L360}, it follows that
\begin{equation}\label{eq:694}
\left\{\left(\frac{\sqrt{1-q}}{\beta(q)}\cdot X_1\right)^2\right\} \text{ indexed by } q_0 < q < 1 \text{ are uniformly integrable.}
\end{equation}
Since $\beta(q) \to \beta$ as $q \to 1$, (\ref{eq:695}) and (\ref{eq:694}) imply that for any fixed $\beta > \beta_0$,
\[
\sqrt{\frac{1-q}{\beta}}\cdot X_1 \overset{L_2}{\longrightarrow} 2\bar{J}(\beta),
\]
as $q \to 1$. Hence, for any fixed $\beta > \beta_0$, we have
\begin{equation}\label{eq:693}
\lim_{q \to 1} \sqrt{\frac{1-q}{\beta}}\cdot \E(X_1) = 2\bar{J}(\beta) \quad\text{ and }\quad \lim_{q \to 1}( 1-q)\cdot \var(X_1) = 0.
\end{equation}
Let $Y \coloneqq \frac{\sum_{i = 1}^m X_i}{n \sqrt{1-q}}$. To prove (\ref{eq:63}), we first show that
\begin{align}
&\lim_{\beta \to \infty} \lim_{q \to 1} \E(Y) = \frac{\sqrt{6}}{3}, \label{eq:692} \\
&\lim_{\beta \to \infty} \lim_{q \to 1} \var(Y) = 0. \label{eq:691}
\end{align}
To prove (\ref{eq:692}), note that since $\{X_i\}_{i\in [m]}$ are i.i.d.\,random variables, we have
\begin{equation}\label{eq:690}
\E(Y) = \frac{m}{n\sqrt{1-q}} \E(X_1) = \frac{m\beta}{n(1-q)}\cdot\frac{\sqrt{1-q}}{\beta}\cdot \E(X_1).
\end{equation}
By the definition of $m$ and (\ref{eq:614}), we have
\begin{equation}\label{eq:688}
\lim_{q \to 1} \frac{m\beta}{n(1-q)} = 1.
\end{equation}
Hence, from (\ref{eq:690}) and using (\ref{eq:693}), it follows that
\begin{equation}\label{eq:689}
\lim_{q \to 1}\E(Y) = \frac{1}{\sqrt{\beta}}\cdot \lim_{q \to 1} \sqrt{\frac{1-q}{\beta}}\cdot \E(X_1) = \frac{2 \bar{J}(\beta)}{\sqrt{\beta}}.
\end{equation}
Thus, (\ref{eq:692}) follows from (\ref{eq:697}), since
\[
\lim_{\beta \to \infty} \lim_{q \to 1} \E(Y) = \lim_{\beta \to \infty}\frac{2 \bar{J}(\beta)}{\sqrt{\beta}} = \frac{\sqrt{6}}{3}.
\]
To prove (\ref{eq:691}), again since $\{X_i\}_{i\in [m]}$ are i.i.d., by (\ref{eq:688}), we have
\begin{align*}
\lim_{q \to 1}\var(Y) &= \lim_{q \to 1}\frac{m}{n^2(1-q)}\var(X_1)\\
                      &= \lim_{q \to 1}\frac{1}{\beta n}\var(X_1) = \lim_{q \to 1}\frac{1}{\beta n(1-q)}(1-q)\var(X_1).
\end{align*}
Hence, for $\beta > \beta_0$, (\ref{eq:614}) and (\ref{eq:693}) imply that
\[
\lim_{q \to 1} \var(Y) = 0,
\]
proving (\ref{eq:691}).
Finally, by the triangle and Jensen's inequalities we have
\[
\textstyle\E\left|Y - \frac{\sqrt{6}}{3}\right| \le \E\Big|Y - \E(Y)\Big| + \left|\E(Y) - \frac{\sqrt{6}}{3}\right| \le \sqrt{\var(Y)} + \left|\E(Y) - \frac{\sqrt{6}}{3}\right|,
\]
which shows that (\ref{eq:692}) and (\ref{eq:691}) imply (\ref{eq:63}).

\subsection{Proof of Lemma \ref{LE67}}
The proof of Lemma \ref{LE67} is by induction on the number of inversions of~$\tau$. In the following, we establish the induction step in Claim \ref{claim2}. First we will prove the following claim. 
\begin{claim} \label{claim1}
Let $\{p_i\}$ be a $q$-Mallows process. Given a block of consecutive indices $B$ and any positive constant $C$, let $M \coloneqq  \max\{i \in B\}$ and define $F \coloneqq \left\{ j \in B : p_j(j) > C \right\}$. Given increasing indices $\bs{v} = (v_1, v_2, \ldots, v_l)$ with $v_i \in B$, for any $1 \le j < k \le l$ and any permutation $\bs{b} = (b_1, b_2, \ldots, b_M) \in S_M$ with $b_{v_j} < b_{v_k}$, we have
\[
\p\left(p_M = \bs{b}\,\big|\, F = \bs{v}\right) \le \p\left(p_M = \bs{b}\ccirc(v_j, v_k)\,\big|\, F = \bs{v}\right).
\]
Here $\bs{b}\ccirc(v_j, v_k)$ denotes the permutation obtained by switching $b_{v_j}$ and $b_{v_k}$ in $\bs{b}$. We abuse the notation $F = \bs{v}$ to indicate that the set of the elements in vector $\bs{v}$ is equal to $F$. 
\end{claim}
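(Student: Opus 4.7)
The plan is to split the proof into two steps: first, show the swap $\bs{b} \mapsto \bs{b}' \coloneqq \bs{b}\ccirc(v_j,v_k)$ preserves the conditioning event, i.e.\ $F(\bs{b}') = F(\bs{b}) = \bs{v}$; then, since the conditioning cancels, compare the two unconditional Mallows probabilities directly via their inversion counts.

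For the first step I would use Lemma \ref{LE611} to interpret $p_i(i)$ as the rank of $b_i$ in $\{b_1,\ldots,b_i\}$ and track the effect of the swap position by position. Writing $a = b_{v_j}$ and $c = b_{v_k}$, so that $a < c$: for $i < v_j$ or $i > v_k$ the multiset $\{b_1,\ldots,b_i\}$ is unchanged, so $p_i(i)$ is unaffected. At $i = v_j$ the rank can only increase (from the rank of $a$ to the rank of $c$ in the same multiset), so $v_j$ stays in $F$. At $i = v_k$, since $\{b_1,\ldots,b_{v_j}\}\subseteq\{b_1,\ldots,b_{v_k}\}$, the rank of $a$ in the larger multiset dominates its rank in the smaller one, which already exceeds $C$ by $v_j\in F$; hence $v_k$ stays in $F$. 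For $v_j < i < v_k$ the rank is unchanged except in the case $a < b_i < c$, where it decreases by exactly one; in that case the containment $\{t \le v_j : b_t \le a\}\cup\{i\} \subseteq \{t \le i : b_t \le b_i\}$ combined with $|\{t\le v_j: b_t \le a\}| > C$ forces $p_i(i) \ge C+2$, so the decrease still leaves $i$ in $F$; and for such $i$ initially outside $F$ the rank can only drop further, so $i$ stays out. This case analysis yields $F(\bs{b}') = \bs{v}$.

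Given $F$-invariance, the second step is immediate: the conditioning cancels and
\[
\frac{\p(p_M = \bs{b}' \mid F = \bs{v})}{\p(p_M = \bs{b} \mid F = \bs{v})} = \frac{\p(p_M = \bs{b}')}{\p(p_M = \bs{b})} = q^{-\bigl(\inv(\bs{b}')-\inv(\bs{b})\bigr)},
\]
using Lemma \ref{LE61} to identify $p_M \sim \mu_{M, 1/q}$. A direct transposition bookkeeping (only pairs involving $v_j$ or $v_k$ are affected) gives $\inv(\bs{b}')-\inv(\bs{b}) = 1 + 2\,|\{i: v_j<i<v_k,\ a < b_i < c\}| \ge 1$, so with $0<q<1$ the ratio is at least $q^{-1} > 1$, proving the claim. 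The genuinely delicate point is the intermediate-position analysis in the first step: the rank there really can decrease, and ruling out a drop from $C+1$ to $C$ requires extracting the numerical consequence $p_{v_j}(v_j) \ge C+1$ from $v_j\in F$ to upgrade the rank at $i$ to at least $C+2$.
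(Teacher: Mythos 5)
Your proof is correct, and it rests on the same two pillars as the paper's: (i) the transposition $\bs{b}\mapsto\bs{b}\ccirc(v_j,v_k)$ preserves the set $F$, and (ii) it strictly increases the inversion number, hence (since $p_M\sim\mu_{M,1/q}$ with $0<q<1$) strictly increases the probability. Where you differ is in execution, and your route is leaner on both counts. For (i), the paper constructs the modified sequence $\hat{t}_i$ of process values explicitly and then needs Lemma \ref{LE612} together with a reverse induction to certify that running the process with those values actually produces $\bs{b}\ccirc(v_j,v_k)$; you instead read $F$ directly off $p_M$ via the rank formula of Lemma \ref{LE611} and track how each rank changes under the swap, which reproduces the same case analysis (in particular the same key estimate $p_i(i)\ge p_{v_j}(v_j)+1$ at intermediate positions with $b_{v_j}<b_i<b_{v_k}$) without any reconstruction step. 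For (ii), the paper factorizes the conditional law of $\{p_i(i)\}$ given $F=\bs{v}$ into independent truncated geometrics and invokes Corollary \ref{C611}; you simply note that $F$ is a deterministic function of $p_M$, so both $\{p_M=\bs{b}\}$ and $\{p_M=\bs{b}\ccirc(v_j,v_k)\}$ are contained in $\{F=\bs{v}\}$ and the conditioning cancels, after which the comparison is a one-line inversion count. The only cosmetic slip is the phrase ``$p_i(i)\ge C+2$'' for a constant $C$ that need not be an integer; what your containment argument actually delivers, and all that is needed, is $p_i(i)-1\ge p_{v_j}(v_j)>C$.
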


\begin{proof}[Proof of Claim \ref{claim1}]
If $\p\left(p_M = \bs{b}\,\big|\, F = \bs{v}\right) = 0$, the claim holds trivially. Suppose $\p\left(p_M = \bs{b}\,\big|\, F = \bs{v}\right) > 0$, i.e.\,there exists $\bs{t} = (t_1, \ldots, t_M)$ such that
\begin{enumerate}[label=(\roman*)]
\item\label{eq:657} $1 \le t_i \le i$, 
\item\label{eq:658} for $i \in B$, $t_i > C$ if only if $i \in \bs{v}$,
\item\label{eq:659} if $p_i(i) = t_i$ for $i \in [M]$, we have $p_M = \bs{b}$.
\end{enumerate}
Define
\begin{equation}\label{eq:660}
\hat{t}_i \coloneqq
\begin{cases}
 t_i &\text{if $1 \le i < v_j$ or $v_k < i \le M$};\\
 t_i - \mathds{1}\left(b_{v_j} < p_M(i) < b_{v_k}\right) &\text{if $v_j < i < v_k$};\\
 v_j - \sum_{i = 1}^{v_j}\mathds{1}\left(p_M(i) > b_{v_k}\right) &\text{if $i = v_j$};\\
 v_k - \sum_{i = 1}^{v_k}\mathds{1}\left(p_M(i) > b_{v_j}\right) &\text{if $i = v_k$}.
\end{cases}
\end{equation}
We show that, if at each step of the $q$-Mallows process $\{\hat{p}_i\}$,
\begin{equation}\label{eq:661}
\hat{p}_{i}(i) = \hat{t}_i \quad \text{ for any } i \in[M]
\end{equation}
we have $\hat{p}_M = \bs{b} \ccirc (v_j, v_k)$. Moreover, if we define $\hat{F} \coloneqq \left\{ i \in B : \hat{p}_i(i) > C \right\}$, then $F = \hat{F}$.

We first show that $\hat{t}_{v_i}$ as defined in (\ref{eq:660}) satisfy that $C < \hat{t}_{v_i} \le v_i$, which implies that $F \subset \hat{F}$. We will prove this claim in different cases depending on the value of $i$.
\begin{itemize}
\item For $1 \le i < j$ or $k < i \le l$, we have $\hat{t}_{v_i} = t_{v_i}$. Thus by \ref{eq:657} and \ref{eq:658}, it follows that $C < \hat{t}_{v_i} \le v_i$. 
\item For $j < i < k$, we have
\[
\hat{t}_{v_i} \le t_{v_i} \le v_i.
\]
On the other hand, by the definition of $q$-Mallows process, $p_M(v_i) > b_{v_j}$ if and only if $p_{v_i}(v_i) > p_{v_i}(v_j)$. Hence if $\mathds{1}\left(b_{v_j} < p_M(v_i) < b_{v_k}\right) = 1$, we have 
\[
t_{v_i} = p_{v_i}(v_i) > p_{v_i}(v_j) \ge t_{v_j} > C,
\]
which means  $\mathds{1}\left(b_{v_j} < p_M(v_i) < b_{v_k}\right) = 1$ implies $t_{v_i} > C + 1$. Thus 
\[
\hat{t}_{v_i} = t_{v_i} - \mathds{1}\left(b_{v_j} < p_M(v_i) < b_{v_k}\right) > C.
\]
\item To show $C < \hat{t}_{v_j} \le v_j$, note that by the definition of $\hat{t}_{v_j}$ in (\ref{eq:660}), we have $\hat{t}_{v_j} \le v_j$. To show $\hat{t}_{v_j} > C$, note that since $p_{v_j}$ is a permutation in $S_{v_j}$, we have
\begin{align}
v_j - t_{v_j} &= \sum_{i = 1}^{v_j}\mathds{1}\left(p_{v_j}(i) > t_{v_j}\right) \nonumber\\
& = \sum_{i = 1}^{v_j}\mathds{1}\left(p_M(i) > b_{v_j}\right) \ge \sum_{i = 1}^{v_j}\mathds{1}\left(p_M(i) > b_{v_k}\right). \label{eq:668}
\end{align}
Here the last inequality follows since $b_{v_j} < b_{v_k}$. The definition of $\hat{t}_{v_j}$ and (\ref{eq:668}) imply $\hat{t}_{v_j} \ge t_{v_j} > C$.
\item To show $C < \hat{t}_{v_k} \le v_k$, again by the definition of $\hat{t}_{v_k}$ in (\ref{eq:660}), we have $\hat{t}_{v_k} \le v_k$. To show $\hat{t}_{v_k} > C$, note that since $p_{v_k}$ is a permutation in $S_{v_k}$, we have
\begin{align}
v_k - t_{v_j} &= \sum_{i = 1}^{v_k}\mathds{1}\left(p_{v_k}(i) > t_{v_j}\right) \nonumber\\
&\ge \sum_{i = 1}^{v_k}\mathds{1}\left(p_{v_k}(i) > p_{v_k}(v_j)\right) = \sum_{i = 1}^{v_k}\mathds{1}\left(p_M(i) > b_{v_j}\right)  \label{eq:675}
\end{align}
Here the inequality follows since $t_{v_j} = p_{v_j}(v_j) \le p_{v_k}(v_j)$. The definition of $\hat{t}_{v_k}$ and (\ref{eq:675}) imply $\hat{t}_{v_k} \ge t_{v_j}> C$
\end{itemize}

To show $\hat{F} \subset F$, note that for $i \in B \setminus \bs{v}$, by the definition of $\hat{t}_i$, we have $\hat{t}_i \le C$. For $v_j < i < v_k$, $\mathds{1}\left(b_{v_j} < p_M(i) < b_{v_k}\right) = 1$ implies $t_i = p_i(i) > p_i(v_j) \ge 1$. Hence
\[
\hat{t}_i = t_i - \mathds{1}\left(b_{v_j} < p_M(i) < b_{v_k}\right) \ge 1.
\]
Since $v_j < v_k$ and $b_{v_j} < b_{v_k}$, it follows from the definition of $\hat{t}_{v_j}$ and $\hat{t}_{v_k}$ that both of them are greater than 0. Therefore, we have shown $F = \hat{F}$.
The fact that $\hat{p}_{i}(i) = \hat{t}_i$ at every step $i \in [M]$ implies $\hat{p}_M = \bs{b} \ccirc (v_j, v_k)$ can be proved by induction. The induction is taken in reverse order with the base case $i = M$ and the induction step is established by using the second part of Lemma \ref{LE612} and the definition of $\hat{t}_{i}$. Specifically, by Lemma \ref{LE612}, for any $i \in M$,
\begin{equation}\label{eq:700}
p_M(i) = t_i + M - i - \sum_{r = i + 1}^{M}\mathds{1}\left(p_M(r) > p_M(i)\right).
\end{equation}
If $v_k < M$, by the definition of $\hat{t}_i$ and the fact that the value of $p_M(i)$ is determined by $\{t_j : i \le j \le M\}$, it follows that $p_M(i) = \hat{p}_M(i)$ for $i > v_k$. If $v_k = M$, then by (\ref{eq:660}), we have
\[
\hat{t}_M = M - \sum_{i = 1}^M \mathds{1}\left(p_M(i) > b_{v_j}\right) = b_{v_j}.
\]
Here the last equality follows since $p_M$ is a permutation in $S_M$. Hence we have $\hat{p}_M(v_k) = \hat{p}_M(M) = \hat{t}_M = b_{v_j}$. On the other hand if $v_k < M$, we plug in $k = b_{v_j}$ and $\hat{p}_{v_k}(v_k) = \hat{t}_{v_k}$ to (\ref{eq:6120}) and verify that the equality holds. The right hand side of (\ref{eq:6120} becomes
\begin{align*}
&\hat{t}_{v_k} + M - v_k - \sum_{r = v_k + 1}^M \mathds{1}\left(\hat{p}_M(r) > b_{v_j} \right)\\
=\,& v_k - \sum_{r = 1}^{v_k} \mathds{1}\left(p_M(r) > b_{v_j} \right) + M - v_k - \sum_{r = v_k + 1}^M \mathds{1}\left(\hat{p}_M(r) > b_{v_j} \right)\\
=\,& M - \sum_{r = 1}^{M} \mathds{1}\left(p_M(r) > b_{v_j} \right)\\
=\,& b_{v_j}.
\end{align*}
Here the second equality follows since by the induction hypothesis $p_M(i) = \hat{p}_M(i)$ for $i > v_k$, and the last equality follows since $p_M$ is a permutation in $S_M$. Hence by Lemma \ref{LE612}, $\hat{p}_M(v_k) = b_{v_j}$.

Next, if $v_j < i < v_k$, we have
\begin{equation}\label{eq:702}
\sum_{r = i + 1}^{M}\mathds{1}\left(p_M(r) > p_M(i)\right) = \sum_{r = i + 1}^{M}\mathds{1}\left(\hat{p}_M(r) > p_M(i)\right) + \mathds{1}\left(b_{v_j} < p_M(i) < b_{v_k}\right).
\end{equation}
Indeed, by induction hypothesis, For $r > i$, $p_M(r)$ and $\hat{p}_M(r)$ differs only when $r = v_k$ with $p_M(v_k) = b_{v_k}$ and $\hat{p}_M(v_k) = b_{v_j}$. Then, by (\ref{eq:700}), (\ref{eq:702}) and $\hat{t}_i = t_i - \mathds{1}\left(b_{v_j} < p_M(i) < b_{v_k}\right)$ we have
\[
p_M(i) = \hat{t}_i + M - i - \sum_{r = i + 1}^{M}\mathds{1}\left(\hat{p}_M(r) > p_M(i)\right)
\]
Hence by Lemma \ref{LE612}, it follows that for $v_j < i < v_k$, we have $\hat{p}_M(i) = p_M(i) = b_i$. The remaining cases when $i = v_j$ and $1 \le i < v_j$ can be proved in a similar fashion. Here we omit their proofs. Therefore we have shown that $\{\hat{p}_i(i) = \hat{t}_i : i \in [M]\}$ implies $\hat{p}_M = p_M \ccirc (v_j, v_k) = \bs{b} \ccirc (v_j, v_k)$.

To prove Claim \ref{claim1}, note that conditioned on $F = \bs{v}$, the random variables $\{p_i(i) - C \cdot \mathds{1}(p_i(i) > C) \}_{i \in B}$ are independent with truncated geometric distributions. To see this, for each $i \in B$ define the events
\[
A_i \coloneqq \{p_i(i) \le C\}, \qquad \bar{A}_i \coloneqq \{p_i(i) > C\}.
\]
Note that, for $i \in \bs{v}$, we have
\begin{align}
 &\p\left(\{p_i(i) = t_i : i \in B \} \, \big| \, F = \bs{v} \right) \nonumber\\
=\,&\p\left( \{p_i(i) = t_i : i \in B\} \, \big| \, (\cap_{i \in \bs{v}} \bar{A}_i) \cap (\cap_{i \in B\setminus \bs{v}} A_i)  \right) \nonumber\\
=& \frac{\p\left((\cap_{i \in B}\{p_i(i) = t_i\})\cap (\cap_{i \in \bs{v}} \bar{A}_i) \cap (\cap_{i \in B\setminus \bs{v}} A_i) \right)}{\p\left((\cap_{i \in \bs{v}} \bar{A}_i) \cap (\cap_{i \in B\setminus \bs{v}} A_i) \right)}\nonumber\\
=\,&\frac{\prod_{i \in \bs{v}} \p\left(\{p_i(i) = t_i\} \cap \bar{A}_i\right)\cdot \prod_{i \in B\setminus \bs{v}} \p\left((\{p_i(i) = t_i\} \cap A_i \right)}{ \prod_{i \in \bs{v}} \p\left(\bar{A}_i\right)\cdot \prod_{i \in B\setminus \bs{v}} \p\left(A_i \right)} \label{eq:6751}\\
=\,& \prod_{i \in \bs{v}} \p\left(p_i(i) = t_i \, \big| \, \bar{A}_i\right)\cdot \prod_{i \in B\setminus \bs{v}} \p\left(p_i(i) = t_i \, \big| \, A_i \right)\nonumber\\
=\,& \prod_{i \in \bs{v}} \p\left(p_i(i) = t_i \, \big| \, p_i(i) > C \right) \cdot \prod_{i \in B \setminus \bs{v}} \p\left(p_i(i) = t_i \, \big| \, p_i(i) \le C \right). \label{eq:6752}
\end{align}

Hence, we have
\begin{align}
\p\left(\left\{p_i(i) = t_i : i \in [M] \right\}\, \big|\, F = \bs{v} \right) &= c\cdot q^{\sum_{i = 1}^{M} t_i - lC}, \label{eq:676}\\
\p\left(\left\{ p_i(i) = \hat{t}_i : i \in [M] \right\}\, \big|\, F = \bs{v} \right) &= c\cdot q^{\sum_{i = 1}^{M} \hat{t}_i - lC}, \label{eq:677}
\end{align}
Here $c$ is a normalizing constant. By Corollary \ref{C611}, we have
\[
 \sum_{i = 1}^{M} t_i = \frac{(M+1)M}{2} - \inv(\bs{b}), \quad \sum_{i = 1}^{M} \hat{t}_i = \frac{(M+1)M}{2} - \inv(\bs{b}\ccirc (v_j, v_k)).
\]
Since $b_{v_j} < b_{v_k}$ implies $\inv(\bs{b}) < \inv(\bs{b}\ccirc (v_j, v_k))$, we have $\sum_{i = 1}^{M} t_i > \sum_{i = 1}^{M} \hat{t}_i$. Thus, by (\ref{eq:676}) and (\ref{eq:677}),
\[
\p\left(\left\{p_i(i) = t_i : i \in [M] \right\}\, \big|\, F = \bs{v} \right) <
\p\left(\left\{ p_i(i) = \hat{t}_i : i \in [M] \right\}\, \big|\, F = \bs{v} \right).
\]
By \ref{eq:659} and (\ref{eq:661}), Claim \ref{claim1} follows. 
\end{proof}

Based on Claim \ref{claim1} and assuming the setting of Lemma \ref{LE67}, we next prove the following claim.

\begin{claim}\label{claim2}
For any $\kappa \in S_M$ and any $w \in [M - 1]$ such that $\kappa^{-1}(w) < \kappa^{-1}(w+1)$, there exists a coupling of two $q$-Mallows process $\{\bar{p}_i\}$ and $\{\hat{p}_i\}$ such that the following are satisfied.
\begin{itemize}
\item With $\bar{F} \coloneqq \left\{ i \in B : \bar{p}_i(i) > C \right\}$ and $\hat{F} \coloneqq \left\{ i \in B : \hat{p}_i(i) > C \right\}$, we have $\bar{F} = \hat{F}$.
\item $\lis((\bar{p}_M)_{\bar{F}}, \kappa_{\bar{F}}) \le \lis((\hat{p}_M)_{\hat{F}}, ((w, w+1)\ccirc \kappa)_{\hat{F}})$.
\end{itemize}
\end{claim}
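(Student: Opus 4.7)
\textbf{Proof plan for Claim \ref{claim2}.} The plan is to do a case analysis based on whether both $\alpha \coloneqq \kappa^{-1}(w)$ and $\beta \coloneqq \kappa^{-1}(w+1)$ (with $\alpha < \beta$ by hypothesis) lie in the random set $F$. Note first that since the induced permutation in Definition \ref{DE61} preserves relative orderings, $\lis((\bar p_M)_F, \kappa_F)$ equals the LIS of the raw point set $\{(\bar p_M(i), \kappa(i)) : i \in F\}$, so I will work with point sets throughout.

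When at least one of $\alpha, \beta$ lies outside $F$, I take the trivial coupling $\hat p_i = \bar p_i$, for which $\hat F = \bar F$ automatically. In this case I will check that $\kappa_F$ and $((w, w+1) \ccirc \kappa)_F$ agree: if $\alpha, \beta \notin F$ the swap does not touch any $F$-position; if only $\alpha$ (say) lies in $F$, the value at $\alpha$ flips between $w$ and $w+1$, but since these are consecutive integers and the other never appears among the values at $F$-positions, the induced rank at $\alpha$ is unchanged. Hence the LIS inequality holds with equality.

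When both $\alpha, \beta \in F$, I will condition on $\bar F = \bs v$ (so $\alpha, \beta \in \bs v$) and apply Claim \ref{claim1} with $v_j = \alpha, v_k = \beta$: for any $\bs b$ with $b_\alpha < b_\beta$, $\p(p_M = \bs b \mid F = \bs v) \le \p(p_M = \bs b \ccirc (\alpha, \beta) \mid F = \bs v)$. The induced stochastic domination produces a monotone coupling in which $\hat p_M = \bar p_M \ccirc (\alpha, \beta)$ deterministically whenever $\bar p_M(\alpha) < \bar p_M(\beta)$, and otherwise $\hat p_M$ equals $\bar p_M$ or $\bar p_M \ccirc (\alpha, \beta)$ with probabilities (lying in $[0,1]$ thanks to Claim \ref{claim1}) that recover the correct marginal. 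Lifting this to a coupling of $q$-Mallows processes via the explicit $\bs t \leftrightarrow \hat{\bs t}$ correspondence in the proof of Claim \ref{claim1} preserves the $F$-event, so $\hat F = \bar F = \bs v$.

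It remains to verify the LIS inequality under the coupling. Set $A = \{(\bar p_M(i), \kappa(i)) : i \in F\}$ and $B = \{(\hat p_M(i), ((w, w+1) \ccirc \kappa)(i)) : i \in F\}$. Whenever $\hat p_M = \bar p_M \ccirc (\alpha, \beta)$, direct substitution shows that the pairs at positions $\alpha$ and $\beta$ are exchanged between the two point sets, so $A = B$ and equality is immediate. The one remaining subcase has $\hat p_M = \bar p_M$ together with $u \coloneqq \bar p_M(\alpha) > v \coloneqq \bar p_M(\beta)$; then $A$ and $B$ differ only via $\{(u, w), (v, w+1)\}$ versus $\{(u, w+1), (v, w)\}$. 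Any increasing chain in $A$ uses at most one of these two incomparable pairs. If it uses neither, the same chain survives in $B$; if it uses $(u, w)$, replacing it with $(u, w+1)$ is valid since predecessors satisfy $y < w < w+1$ and any successor (which cannot be $(v, w+1)$ since $v < u$) must have $y \ge w + 2 > w+1$; the symmetric argument handles a chain using $(v, w+1)$. The main obstacle is this last subcase, whose verification relies crucially on $w, w+1$ being consecutive integers occurring in $\kappa$ only at $\alpha, \beta$, and on the ordering $u > v$ to rule out any spurious obstruction in $B$.
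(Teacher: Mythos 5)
Your proposal is correct and follows essentially the same route as the paper: the same case split on whether both $\kappa^{-1}(w)$ and $\kappa^{-1}(w+1)$ lie in $F$, and the same Claim~\ref{claim1}-based coupling supported on $\{(\bs b,\bs b\ccirc(\alpha,\beta)),\,(\bs b\ccirc(\alpha,\beta),\bs b),\,(\bs b\ccirc(\alpha,\beta),\bs b\ccirc(\alpha,\beta))\}$ --- indeed your ``monotone'' coupling assigns exactly the same probabilities as the paper's pair-plus-coin-flip construction, and you both observe that the genuinely swapped outcomes give equality of the two LIS quantities. The only divergence is in the residual outcome $\hat p_M=\bar p_M=\bs b\ccirc(\alpha,\beta)$: the paper passes to induced permutations, composes with $(\kappa_{\bs v})^{-1}$ and invokes Lemma~2.3 of \cite{Ke2}, whereas you verify the inequality directly by chain surgery on the point sets, using that $w$ and $w+1$ are consecutive values occurring only at positions $\alpha$ and $\beta$ together with $u>v$; your verification is correct and has the minor benefit of being self-contained.
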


\begin{proof}[Proof of Claim \ref{claim2}]
By Lemma \ref{LE611}, we know that the values of $\{p_i(i)\}_{i \in [M]}$ are determined by $p_M$. Hence, to construct a coupling of $\{\bar{p}_i\}$ and $\{\hat{p}_i\}$, it suffices to define a coupling of $(\bar{p}_M, \hat{p}_M)$.

Let $\{p_i\}$ be a $q$-Mallows process. Define $F \coloneqq \left\{ i \in B : p_i(i) > C \right\}$. Let $\bs{v} = \{v_1, \cdots, v_l\}$ be a sequence of increasing indices in $[M]$. Conditioned on $F = \bs{v}$, we define $(\bar{p}_M, \hat{p}_M)$ as follows.
\begin{itemize}
\item[] \textbf{Case 1}: If $\kappa^{-1}(w) \notin \bs{v}$ or $\kappa^{-1}(w+1) \notin \bs{v}$, define $\bar{p}_M = \hat{p}_M = p_M$.
\item[] \textbf{Case 2}: If $\kappa^{-1}(w) = v_j$ and $\kappa^{-1}(w+1) = v_k$, note that we can partition $S_M$ into pairs of permutations $\{\bs{b}, \bs{b}\ccirc (v_j, v_k)\}$ with $b_{v_j} < b_{v_k}$. Then, first choose a pair of permutations $\{\bs{b}, \bs{b}\ccirc (v_j, v_k)\}$ with probability $\p\left(p_M = \bs{b}\,\big|\, F = \bs{v}\right) + \p\left(p_M = \bs{b}\ccirc(v_j, v_k)\,\big|\, F = \bs{v}\right)$. Flip a coin with probability of head being
\begin{equation}\label{eq:678}
h \coloneqq \frac{2\cdot\p\left(p_M = \bs{b}\,\big|\, F = \bs{v}\right)}{\p\left(p_M = \bs{b}\,\big|\, F = \bs{v}\right) + \p\left(p_M = \bs{b}\ccirc(v_j, v_k)\,\big|\, F = \bs{v}\right)}.
\end{equation}
If the outcome is tail, define $\bar{p}_M = \hat{p}_M = \bs{b} \ccirc (v_j, v_k)$. If the outcome is head, then, with equal probability, define either $\bar{p}_M =\bs{b}$, $\hat{p}_M = \bs{b}\ccirc(v_j, v_k)$ or $\bar{p}_M = \bs{b}\ccirc (v_j, v_k)$, $\hat{p}_M = \bs{b}$.
\end{itemize}
For the first case, note that $\kappa^{-1}(w) \notin \bs{v}$ or $\kappa^{-1}(w+1) \notin \bs{v}$ implies  $\kappa_{\bs{v}} = ((w, w+1)\ccirc \kappa)_{\bs{v}}$. Hence, by setting $\bar{p}_M = \hat{p}_M = p_M$, the two conditions in the claim are satisfied trivially. For the second case, note that by Claim \ref{claim1}, the probability of being head $h$ defined in (\ref{eq:678}) is no greater than 1. As shown in the proof of Claim \ref{claim1}, when one of $\bar{p}_M$ and $\hat{p}_M$ equals $\bs{b}$ and the other equals $\bs{b}\ccirc(v_j, v_k)$, we have $\bar{F} = \hat{F} = \bs{v}$. Moreover, it is easy to verify that $((w, w+1)\ccirc \kappa)_{\bs{v}} = \kappa_{\bs{v}}\ccirc (j, k)$ and $(\bs{b}\ccirc (v_j, v_k))_{\bs{v}} = \bs{b}_{\bs{v}}\ccirc (j, k)$. Hence, when the outcome of the coin is head we have either $\bar{p}_M =\bs{b}$, $\hat{p}_M = \bs{b}\ccirc(v_j, v_k)$ or $\bar{p}_M = \bs{b}\ccirc (v_j, v_k)$, $\hat{p}_M = \bs{b}$. In either case, we can verify that
\begin{equation}\label{eq:6781}
\lis((\bar{p}_M)_{\bs{v}}, \kappa_{\bs{v}}) = \lis((\hat{p}_M)_{\bs{v}}, ((w, w+1)\ccirc \kappa)_{\bs{v}}).
\end{equation}
For example, if $\bar{p}_M =\bs{b}$, $\hat{p}_M = \bs{b}\ccirc(v_j, v_k)$, we have
\begin{align*}
\lis((\bar{p}_M)_{\bs{v}}, \kappa_{\bs{v}}) &= \lis(\bs{b}_{\bs{v}}, \kappa_{\bs{v}}),\\
\lis((\hat{p}_M)_{\bs{v}}, ((w, w+1)\ccirc \kappa)_{\bs{v}}) &= \lis((\bs{b}\ccirc (v_j, v_k))_{\bs{v}}, \kappa_{\bs{v}}\ccirc (j, k))\\
&= \lis(\bs{b}_{\bs{v}}\ccirc (j, k), \kappa_{\bs{v}}\ccirc (j, k))\\
& = \lis(\bs{b}_{\bs{v}}, \kappa_{\bs{v}}).
\end{align*}
For the other case, (\ref{eq:6781}) can be verified similarly.
When the outcome is tail, we need to show that
\begin{equation}\label{eq:679}
\lis((\bs{b} \ccirc (v_j, v_k))_{\bs{v}}, \kappa_{\bs{v}}) \le \lis((\bs{b} \ccirc (v_j, v_k))_{\bs{v}}, ((w, w+1)\ccirc \kappa)_{\bs{v}}).
\end{equation}
Note that we have $(\bs{b} \ccirc (v_j, v_k))_{\bs{v}} = \bs{b}_{\bs{v}} \ccirc (j, k)$ and $((w, w+1)\ccirc \kappa)_{\bs{v}} = (r, r+1)\ccirc \kappa_{\bs{v}}$, where $r$ is the rank of $w$ in $\kappa$ restricted to $\bs{v}$. Moreover, we have $(\kappa_{\bs{v}})^{-1}(r) = j < k = (\kappa_{\bs{v}})^{-1}(r+1)$. Hence by Lemma 2.3 in \cite{Ke2}, we have
\begin{align}
\lis((\bs{b} \ccirc (v_j, v_k))_{\bs{v}}, \kappa_{\bs{v}}) &= \lis( \bs{b}_{\bs{v}} \ccirc (j, k), \kappa_{\bs{v}}) \label{eq:680} \\ 
&= \lis(\bs{b}_{\bs{v}} \ccirc (j, k) \ccirc (\kappa_{\bs{v}})^{-1}, id) \nonumber \\
\lis((\bs{b} \ccirc (v_j, v_k))_{\bs{v}}, ((w, w+1)\ccirc \kappa)_{\bs{v}}) &= \lis( \bs{b}_{\bs{v}} \ccirc (j, k), (r, r+1)\ccirc \kappa_{\bs{v}}) \label{eq:681}\\
&= \lis(\bs{b}_{\bs{v}} \ccirc (j, k) \ccirc (\kappa_{\bs{v}})^{-1}, (r, r+1)). \nonumber
\end{align}
Here $id$ denotes the identity in $S_l$. Note that
\begin{align}
 &\bs{b}_{\bs{v}} \ccirc (j, k) \ccirc (\kappa_{\bs{v}})^{-1}(r) = \bs{b}_{\bs{v}} \ccirc (j, k)(j) = \bs{b}_{\bs{v}}(k), \\
 &\bs{b}_{\bs{v}} \ccirc (j, k) \ccirc (\kappa_{\bs{v}})^{-1}(r+1) = \bs{b}_{\bs{v}} \ccirc (j, k)(k) = \bs{b}_{\bs{v}}(j).
\end{align}
Since $b_{v_j} < b_{v_k}$, we have $\bs{b}_{\bs{v}}(j) < \bs{b}_{\bs{v}}(k)$, which means $\{r, r+1\}$ form an inversion for the permutation $\bs{b}_{\bs{v}} \ccirc (j, k) \ccirc (\kappa_{\bs{v}})^{-1}$. Hence (\ref{eq:679}) follows from (\ref{eq:680}) and (\ref{eq:681}).

Finally, it can be easily verified that $\bar{p}_M$ and $\hat{p}_M$ thus defined have the right marginal distribution, i.e.\,both $\bar{p}_M$ and $\hat{p}_M$ have the same distribution as $p_M$.

\end{proof}

Before we complete the proof of Lemma \ref{LE67}, we introduce the following partial order on $S_n$.

\begin{definition}\label{DE2}
The left weak Bruhat order $(S_n, \le_L)$ is defined as the transitive closure of the relations
\[
\pi \le_L \tau \quad \text{if} \quad \tau = (i, i+1) \ccirc \pi \ \text{ and }\  \inv(\tau) = \inv(\pi) + 1.
\]
\end{definition}

\begin{proof}[Proof of Lemma \ref{LE67}]
Let $id_M^r$ denote the reversal of identity in $S_M$. Considering the poset $(S_M, \le_L)$, it follows from Definition \ref{DE2} that $id_M^r$ is the maximum element in $(S_M, \le_L)$. Hence for any permutation $\kappa \neq id_M^r$, we can find a sequence of permutations $\{\kappa_i\}$ such that
\[
\kappa  = \kappa_0 \le_L \kappa_1 \le_L \cdots \le_L \kappa_m = id_M^r,
\]
and $\kappa_{i+1}$ covers $\kappa_{i}$, i.e.\,there exists $w \in [M-1]$ such that $(w, w+1) \ccirc \kappa_i = \kappa_{i+1}$ and $\inv(\kappa_{i+1}) = \inv(\kappa_{i}) + 1$. Note that here $m = \frac{M(M-1)}{2} - \inv(\kappa)$. Then by Claim~\ref{claim2} and induction on $m$, it can be shown that there exists a coupling, denoted by $\mathcal{C}_{\kappa}$, of two $q$-Mallows processes $\{\bar{p}_i\}$ and $\{\hat{p}_i\}$ such that the following are satisfied.
\begin{itemize}
\item With $\bar{F} \coloneqq \left\{ i \in B : \bar{p}_i(i) > C \right\}$ and $\hat{F} \coloneqq \left\{ i \in B : \hat{p}_i(i) > C \right\}$, we have $\bar{F} = \hat{F}$.
\item $\lis((\bar{p}_M)_{\bar{F}}, \kappa_{\bar{F}}) \le \lis((\hat{p}_M)_{\hat{F}}, (id_M^r)_{\hat{F}})$.
\end{itemize}
Note that, by Definition \ref{D31}, for any increasing sequence of indices $F$, we have
\begin{align}
\lis(\pi_F, \tau_F) &= \lis(\pi(F), \tau(F)) = \lis(\bar{p}_n(F), p'_n(F))\label{eq:682}\\
 &= \lis(\bar{p}_M(F), p'_M(F)) = \lis((\bar{p}_M)_F, (p'_M)_F), \nonumber\\
\lis(\hat{\pi}_F) &= \lis(\hat{\pi}(F), id_n(F)) = \lis(\hat{p}_n(F), (id_n^r)(F))\label{eq:683}\\
&= \lis(\hat{p}_M(F), (id_M^r)(F)) = \lis((\hat{p}_M)_F, (id_M^r)_F). \nonumber
\end{align}
Here $id_n$ denotes the identity in $S_n$. Hence by (\ref{eq:682}) and (\ref{eq:683}) we have
\begin{equation}\label{eq:684}
\lis((\bar{p}_M)_F, (p'_M)_F) \le \lis((\hat{p}_M)_F, (id_M^r)_F) \Rightarrow \lis(\pi_F, \tau_F) \le \lis(\hat{\pi}_F).
\end{equation}
We define the coupling $\{\bar{p}_i\}$, $\{p'_i\}$ and $\{\hat{p}_i\}$ as follows. For any $i > M$, we simply let $\bar{p}_i$, $p'_i$ and $\hat{p}_i$ be i.i.d.\,truncated geometric distributed. For $1 \le i \le M$, let $p'_M \sim \mu_{M, q}$. Conditioned on $p'_M = \kappa$, define $\{\bar{p}_i\}$ and $\{\hat{p}_i\}$ such that they have joint distribution $\mathcal{C}_{\kappa}$. The lemma follows from (\ref{eq:684}) and the property of $\mathcal{C}_{\kappa}$.
\end{proof}

\section{Central Limit Theorem for LCS}
In this section, we prove a central limit theorem for the LCS of two independent Mallows permutations when the parameters $0 < q, q' < 1$ are fixed. The proof of Theorem \ref{M5} is based on the approach developed in \cite{Basu} in which the authors prove a central limit theorem for the LIS of a Mallows permutation. The idea is to construct a regenerative process such that we can bound the LCS by the sum of i.i.d.\,random variables defined in terms of the process.

\subsection{Constructing Mallows Permutations}
For a given parameter $0 < q < 1$, Gnedin and Olshanski \cite{Gnedin} constructed an infinite Mallows permutation with parameter $q$ on $\mathbb{N}$ by an insertion process, which we will refer to as Mallows($q$) process. This gives us another method for generating finite sized Mallows permutations. Given an i.i.d.\,sequence $\{Z_i\}_{i \ge 1}$ of Geom($1-q$) variables, construct a permutation $\tilde{\Pi}$ of the natural numbers inductively according the following rule: Set $\tilde{\Pi}(1) = Z_1$. For $i > 1$, set $\tilde{\Pi}(i) = k$ where $k$ is the $Z_i$-th number in the increasing order from the set $\mathbb{N}\setminus\{\tilde{\Pi}(j) : 1 \le j < i\}$. For example, suppose that the realizations of the first five independent geometric random variables are $Z_1 = 4, Z_2 = 4, Z_3 = 1, Z_4 = 2, Z_5 = 3$. Then we have $\tilde{\Pi}(1) = 4$, $\tilde{\Pi}(2) = 5$, $\tilde{\Pi}(3) = 1$, $\tilde{\Pi}(4) = 3$ and $\tilde{\Pi}(5) = 7$. We represent the process step-by-step below. 
\[
\begin{matrix}
\underline{\phantom{000}}&\underline{\phantom{000}}&\underline{\phantom{000}}&1&\underline{\phantom{000}}&\underline{\phantom{000}}&\underline{\phantom{000}}&\underline{\phantom{000}}&\cdots\\
\underline{\phantom{000}}&\underline{\phantom{000}}&\underline{\phantom{000}}&1&2&\underline{\phantom{000}}&\underline{\phantom{000}}&\underline{\phantom{000}}&\cdots\\
3&\underline{\phantom{000}}&\underline{\phantom{000}}&1&2&\underline{\phantom{000}}&\underline{\phantom{000}}&\underline{\phantom{000}}&\cdots\\
3&\underline{\phantom{000}}&4&1&2&\underline{\phantom{000}}&\underline{\phantom{000}}&\underline{\phantom{000}}&\cdots\\
3&\underline{\phantom{000}}&4&1&2&\underline{\phantom{000}}&5&\underline{\phantom{000}}&\cdots
\end{matrix}
\]
Let $\Pi_n$ be the permutation on $[n]$ induced by $\tilde{\Pi}$, i.e.\,,$\Pi_n(i) = j$ if $\tilde{\Pi}(i)$ has rank $j$ when the set $\{\tilde{\Pi}(k) : k \in [n]\}$ is arranged in increasing order. Consider the example above when $n = 5$. Then we have $\Pi_5(1) = 3$, $\Pi_5(2) = 4$, $\Pi_5(3) = 1$, $\Pi_5(4) = 2$ and $\Pi_5(5) = 5$. The following lemma (cf. Lemma 2.1 in \cite{Basu}) says that $\Pi_n$ thus defined is Mallows distributed with parameter $q$.
\begin{lemma}\label{LE41}
Let $\tilde{\Pi}$ be an infinite Mallows($q$) permutation and let $\Pi_n$ be the induced permutation on $[n]$ as defined above. Then $\Pi_n$ is a Mallows($q$) permutation on $[n]$.
\end{lemma}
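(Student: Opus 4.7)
The plan is to compute $\p(\Pi_n = \sigma)$ for an arbitrary $\sigma \in S_n$ directly from the Gnedin--Olshanski insertion rule and verify that it equals $\mu_{n,q}(\sigma) = q^{\inv(\sigma)}/Z_{n,q}$, where $Z_{n,q} = \prod_{i=1}^n (1-q^i)/(1-q)$. First I would decompose the event $\{\Pi_n = \sigma\}$ into the disjoint union of events $\{\tilde{\Pi}(i) = k_i \text{ for all } i \in [n]\}$, where $(k_1,\ldots,k_n)$ ranges over $n$-tuples of distinct positive integers whose rank pattern is $\sigma$. Such tuples are in bijection with $n$-element subsets $\{c_1 < c_2 < \cdots < c_n\} \subset \mathbb{N}$ via $k_i = c_{\sigma(i)}$.

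For a specific realization $(k_1,\ldots,k_n)$, the insertion rule forces the Geometric variables to the unique values $Z_i = k_i - m_i$, where $m_i := |\{j < i : k_j < k_i\}| = |\{j < i : \sigma(j) < \sigma(i)\}|$ depends only on $\sigma$. Using independence of the $Z_i$ and $\p(Z_i = z) = (1-q) q^{z-1}$, the probability of this realization equals $(1-q)^n q^{\sum_i k_i - \sum_i m_i - n}$. The combinatorial identity $\sum_{i=1}^n m_i = \binom{n}{2} - \inv(\sigma)$ (counting non-inversions of $\sigma$) rewrites the exponent as $\sum_i k_i - \binom{n}{2} + \inv(\sigma) - n$.

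Summing over all realizations then reduces, via the rearrangement invariance $\sum_i k_i = \sum_i c_i$, to evaluating the sum $\sum_{c_1 < \cdots < c_n} q^{c_1 + \cdots + c_n}$ over $n$-subsets of $\mathbb{N}$. The substitution $d_i = c_i - i$ with $0 \le d_1 \le \cdots \le d_n$ turns this into a classical partition-generating sum equal to $q^{\binom{n+1}{2}} \prod_{i=1}^n (1-q^i)^{-1}$. Combining everything, the identity $\binom{n+1}{2} - \binom{n}{2} - n = 0$ makes the stray $q$-exponents cancel exactly, leaving $q^{\inv(\sigma)}$, while the prefactor $(1-q)^n / \prod_{i=1}^n (1-q^i)$ is precisely $1/Z_{n,q}$. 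This yields $\p(\Pi_n = \sigma) = q^{\inv(\sigma)}/Z_{n,q}$, as desired. The main step demanding care is the bookkeeping of the $q$-exponents; the only non-elementary input is the classical partition identity $\sum_{0 \le d_1 \le \cdots \le d_n} q^{\sum d_i} = \prod_{i=1}^n (1-q^i)^{-1}$.
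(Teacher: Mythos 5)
Your proof is correct, and it is worth noting that the paper itself does not prove this lemma at all: it simply cites Lemma 2.1 of Basu--Bhatnagar (which in turn goes back to Gnedin--Olshanski), so your argument is a genuinely self-contained alternative rather than a reproduction. I checked the key steps: the event $\{\Pi_n=\sigma\}$ does decompose disjointly over tuples $(k_1,\dots,k_n)=(c_{\sigma(1)},\dots,c_{\sigma(n)})$; the insertion rule does force $Z_i=k_i-m_i$ with $m_i=|\{j<i:\sigma(j)<\sigma(i)\}|$ (and $m_i\le k_i-1$ guarantees this is a legal geometric value); the identity $\sum_i m_i=\binom{n}{2}-\inv(\sigma)$ is the count of non-inversions; the substitution $d_i=c_i-i$ gives $\sum_i c_i=\binom{n+1}{2}+\sum_i d_i$ and reduces the sum over subsets to the partition generating function $\prod_{i=1}^n(1-q^i)^{-1}$; and the exponent bookkeeping $\binom{n+1}{2}-\binom{n}{2}-n=0$ leaves exactly $q^{\inv(\sigma)}(1-q)^n/\prod_{i=1}^n(1-q^i)=q^{\inv(\sigma)}/Z_{n,q}$. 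The standard route in the literature instead proves consistency of the family $\{\mu_{n,q}\}$ under deleting the last element and identifies the insertion process with that projective system; your computation trades that structural argument for explicit $q$-series bookkeeping, which buys a short, verifiable, one-shot formula at the cost of obscuring why the construction is consistent for all $n$ simultaneously. Either way the lemma stands; your version could be dropped in as a proof where the paper currently only has a citation.
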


\subsection{The Regenerative Process Representation}\label{regen}
A stochastic process $\{X(t) : t \ge 0\}$ is said to be a \emph{regenerative process} if there exist regeneration times $0 \le T_0 < T_1 < T_2 < \cdots$ such that for each $k \ge 1$, the process $\{X(T_k + t) : t \ge 0\}$ has the same distribution as $\{X(T_0 + t) : t \ge 0\}$ and is independent of $\{X(t) : 0 \le t < T_k\}$. In the following, we will define a regenerative process using two independent copies of the Mallows($q$) process.

Let $\tilde{\Pi}$ and $\tilde{\Pi}'$ be two independent infinite Mallows permutations with parameters $q, q'$ respectively. Suppose for a given $m \in \mathbb{N}$ we have $\tilde{\Pi}([m]) = \tilde{\Pi}'([m]) = [m]$, i.e.\,the permutations $\tilde{\Pi}$ and $\tilde{\Pi}'$ restricted to $[m]$ define two bijections from $[m]$ to $[m]$. Define two infinite permutations $\tilde{\Pi}_{m}$ and $\tilde{\Pi}'_{m}$ as follows,
\[
\tilde{\Pi}_{m}(i) \coloneqq \tilde{\Pi}(i+m) - m, \qquad \tilde{\Pi}'_{m}\coloneqq \tilde{\Pi}'(i+m) - m, \quad \forall i \in \mathbb{N}.
\]
From the construction of $\tilde{\Pi}$ and $\tilde{\Pi}'$, it is obvious that $\tilde{\Pi}_{m}$ and $\tilde{\Pi}'_{m}$ are also infinite Mallows permutations with parameters $q$ and $q'$ respectively. Together with the independence of the geometric variables $\{Z_i\}$ as well as $\{Z'_i\}$, it follows that $\big\{\big(\tilde{\Pi}(i)-i, \tilde{\Pi}'(i) - i\big)\big\}_{i\in\mathbb{N}}$ is a regenerative process with regeneration times $0 = T_0 < T_1 < T_2 < \cdots$ where for $i > 1$ we have,
\[
T_i \coloneqq \min{\left\{j > T_{i-1} : \big\{\tilde{\Pi}(k) : k \in [j]\big\}=  \big\{\tilde{\Pi}'(k) : k \in [j]\big\} =  [j]\right\}}.
\]
Let $X_j \coloneqq T_j - T_{j-1}$ for $j \ge 1$. Clearly, $X_j$ are independent and identically distributed. For $j \ge 1$, define
\[
\Sigma_j(i) \coloneqq \tilde{\Pi}(i + T_{j-1}) - T_{j-1}, \quad \Sigma'_j(i) \coloneqq \tilde{\Pi}'(i + T_{j-1}) - T_{j-1}, \quad \forall i \in [X_j].
\]
Then, both $\Sigma_j$ and $\Sigma'_j$ are permutations of $[X_j]$. Furthermore, the $\{\Sigma_j\}_{j \in \mathbb{N}}$ are i.i.d.\,and $\{\Sigma'_j\}_{j \in \mathbb{N}}$ are i.i.d.. Let $Y_j \coloneqq \lcs(\Sigma_j, \Sigma'_j)$ i.e.\,$Y_j$ denotes the length of the longest common subsequence between $\Sigma_j$ and $\Sigma'_j$. Clearly, $\{Y_j\}_{j \in \mathbb{N}}$ are i.i.d..  Then we have the following bounds for the $\lcs$ of two independent Mallows permutation.
\begin{lemma}\label{LE42}
Let $S_n \coloneqq \min\{j: T_j \ge n\}$. Then we have
\[
\sum_{j = 1}^{S_n - 1} Y_j < \lcs(\Pi_n, \Pi'_n) \le \sum_{j = 1}^{S_n} Y_j.
\]
\end{lemma}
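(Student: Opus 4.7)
The plan is to exploit the regenerative structure to decompose $\lcs(\Pi_n, \Pi'_n)$ block-by-block. The joint times $\{T_j\}$ partition $[n]$ into the complete blocks $B_j = (T_{j-1}, T_j]$ for $j = 1, \ldots, S_n - 1$ together with the (nonempty) partial last block $B = (T_{S_n-1}, n]$, nonempty because $T_{S_n - 1} < n$ by the definition of $S_n$. The key observation is that both $\tilde\Pi$ and $\tilde\Pi'$ map each complete block bijectively onto itself, and $\tilde\Pi([T_{S_n-1}]) = [T_{S_n-1}] = \tilde\Pi'([T_{S_n-1}])$. Consequently $\Pi_n$ and $\Pi'_n$ also preserve this partition, and the restriction $\Pi_n|_{B_j}$, shifted by $T_{j-1}$ in both index and value, coincides exactly with $\Sigma_j$ (and similarly for $\Pi'_n$ and $\Sigma'_j$).

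For the lower bound I would concatenate optimal common subsequences from each complete block: within $B_j$ one obtains $Y_j$ matched pairs, and the concatenation remains a valid common subsequence of $\Pi_n, \Pi'_n$ because all coordinates of matched pairs in block $s$ lie strictly below those in block $t$ whenever $s < t$. This yields a common subsequence of length $\sum_{j=1}^{S_n - 1} Y_j$. The strict inequality follows by observing that the nonempty partial block $B$ contributes at least one additional matched pair: for any $v \in (T_{S_n-1}, n]$, the pair $(\Pi_n^{-1}(v), (\Pi'_n)^{-1}(v))$ lies in $B \times B$ and extends the subsequence by one.

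For the upper bound I would take a longest common subsequence of $\Pi_n, \Pi'_n$ and partition its matched pairs by block. Since both permutations preserve the block partition, each matched pair $(i, j)$ with $\Pi_n(i) = \Pi'_n(j) = v$ has $i$, $j$, and $v$ all in the same block, so the subsequence splits cleanly across blocks with no interaction between them. Each piece within a complete block $B_j$ is a common subsequence of $\Sigma_j, \Sigma'_j$ and so has length at most $Y_j$; the piece within the partial block $B$ is bounded by $Y_{S_n}$. Summing these bounds gives $\lcs(\Pi_n, \Pi'_n) \le \sum_{j=1}^{S_n} Y_j$.

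The main subtlety is the bound for the partial last block in the upper bound: since $n$ may lie strictly inside the regenerative block $(T_{S_n-1}, T_{S_n}]$, the restriction of $\Pi_n$ to $B$ (after shifting) is the rank permutation of a prefix of $\Sigma_{S_n}$ rather than $\Sigma_{S_n}$ itself, and similarly for $\Pi'_n$ and $\Sigma'_{S_n}$. Dominating the LCS of these prefix-rank permutations by $Y_{S_n} = \lcs(\Sigma_{S_n}, \Sigma'_{S_n})$ is where most of the technical work lies, and is handled by embedding the partial block monotonically into the full regenerative block using the joint Mallows coupling.
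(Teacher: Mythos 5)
Your overall architecture --- split $[n]$ by the joint renewal times, concatenate block-optimal common subsequences for the lower bound, and split an optimal common subsequence by block for the upper bound --- is exactly the paper's argument, and your lower-bound step (including the strict inequality coming from one extra matched pair supplied by the nonempty partial block) is correct and complete. You also correctly isolate the one genuinely delicate point, namely the last, possibly incomplete block, which the paper's own proof passes over in silence by treating every block as if it were complete.

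However, the step you defer as ``technical work'' --- dominating the $\lcs$ of the rank-reduced prefixes of $\Sigma_{S_n},\Sigma'_{S_n}$ by $Y_{S_n}=\lcs(\Sigma_{S_n},\Sigma'_{S_n})$ --- is not merely unproven; it is false as a pathwise statement, so no monotone embedding or coupling can deliver it. Rank reduction matches the $r$-th smallest value of $\tilde{\Pi}([n])$ with the $r$-th smallest value of $\tilde{\Pi}'([n])$, and these need not be equal, so the reduced permutations can acquire matches that simply do not exist in the full regenerative block. Concretely, take $\tilde{\Pi}=(1,3,2,\dots)$ and $\tilde{\Pi}'=(2,3,1,\dots)$, a positive-probability event for any $q,q'$: then $T_1=3$, and for $n=2$ one has $S_n=1$ and $\Pi_2=\Pi'_2=(1,2)$, hence $\lcs(\Pi_2,\Pi'_2)=2$, while $Y_1=\lcs\big((1,3,2),(2,3,1)\big)=1$ because the three matched position pairs $(1,3),(2,2),(3,1)$ form an antichain. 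Thus the claimed upper bound $\lcs(\Pi_n,\Pi'_n)\le\sum_{j=1}^{S_n}Y_j$ fails on this event, and the same example shows that the paper's identification of the last block's contribution with $Y_{S_n}$ cannot be taken literally either. The gap is repairable in a way that preserves everything downstream: bound the partial block's contribution by its cardinality, giving $\lcs(\Pi_n,\Pi'_n)\le\sum_{j=1}^{S_n-1}Y_j+\big(n-T_{S_n-1}\big)\le\sum_{j=1}^{S_n-1}Y_j+X_{S_n}$; since $X_{S_n}\le\max_{1\le i\le n}X_i$ and $X_{S_n}/\sqrt{n}\overset{p}{\longrightarrow}0$ by Lemma \ref{LE57} and Lemma \ref{LE59}, the proof of Theorem \ref{M5} goes through unchanged with this weaker upper bound.
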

\begin{proof}
Given $j > 0$, let $\lcs_{[T_{j-1}+1, T_j]}(\Pi_n, \Pi'_n)$ denote the length of the longest common subsequence of $\Pi_n, \Pi'_n$ restricted on $[T_{j-1}+1, T_j]$. From the definition of $T_j$, we have $\Pi_n([T_{j-1}+1, T_j]) = \Pi'_n([T_{j-1}+1, T_j]) = [T_{j-1}+1, T_j]$. Thus, we get
\[
\sum_{j = 1}^{S_n - 1} \lcs_{[T_{j-1}+1, T_j]}(\Pi_n, \Pi'_n) < \lcs(\Pi_n, \Pi'_n) \le \sum_{j = 1}^{S_n} \lcs_{[T_{j-1}+1, T_j]}(\Pi_n, \Pi'_n).
\]
It follows from the definition of $\Sigma_j$ and $\Sigma'_j$ that there exists a bijection between the common subsequences of $\Pi_n$, $\Pi'_n$ restricted on $[T_{j-1}+1, T_j]$ and the common subsequences of $\Sigma_j$, $\Sigma'_j$. Hence we have $\lcs_{[T_{j-1}+1, T_j]}(\Pi_n, \Pi'_n) = Y_j$. The lemma follows.
\end{proof}

\subsection{Renewal Time Estimate and Proof of the CLT for LCS}
In this section, we first prove that the inter-renewal times $X_i$ as defined in the previous section have finite first and second moments, which are the conditions required to apply results from the theory of regenerative processes to show Theorem \ref{M5}. Again we follow the approach developed in \cite{Basu}, in which the authors introduce the following Markov chain.

Let $\{M_n\}_{n \ge 0}$ denote the Markov chain with the state space $\Omega = \mathbb{N}\cup\{0\}$ and the one step transition defined as follows: $M_n \coloneqq  \max\{M_{n-1}, Z_n\} - 1$ where $\{Z_i\}$ is a sequence of i.i.d.\,Geom($1-q$) variables. Likewise, for the parameter $q'$, we define a Markov chain $\{M'_n\}_{n \ge 0}$ in the same fashion, i.e., the one step transition rule is defined by $M'_n \coloneqq \max\{M'_{n-1}, Z'_n\} - 1$ where $\{Z'_i\}$ is a sequence of i.i.d.\,Geom($1-q'$) variables. Let $\{M^{\otimes}_n\}_{n \ge 0}$ denote the product chain of $\{M_n\}$ and $\{M'_n\}$. Let $R^+_0$ denote the first return time to $(0, 0)$ of this chain, i.e.
\[
R^+_0 \coloneqq \min\{k > 0: M^{\otimes}_k = (0, 0)\}.
\]
\begin{lemma}\label{LE51}
For the Markov chain $\{M^{\otimes}_n\}$ started at $M^{\otimes}_0 = (0, 0)$, the first return time $R^+_0 \overset{d}{=} T_1$. In other words, $X_i$ has the same distribution as $R^+_0$.
\end{lemma}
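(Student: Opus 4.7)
The plan is to exhibit an explicit coupling under which $R^+_0$ and $T_1$ are in fact equal almost surely, not merely equal in distribution. The key is to reinterpret the chains $\{M_n\}$ and $\{M'_n\}$ as functionals of the infinite Mallows processes $\tilde{\Pi}$ and $\tilde{\Pi}'$ that built $T_1$. Concretely, I will set
\[
H_n \coloneqq \max_{1 \le k \le n} \tilde{\Pi}(k) - n, \qquad H'_n \coloneqq \max_{1 \le k \le n} \tilde{\Pi}'(k) - n,
\]
with $H_0 = H'_0 = 0$, and show that $\{H_n\}$ (resp. $\{H'_n\}$) evolves exactly according to the recursion defining $\{M_n\}$ (resp. $\{M'_n\}$) driven by the same geometric random variables $\{Z_i\}$ (resp. $\{Z'_i\}$) that generate the Mallows processes.

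The main step to verify is the transition rule. After $n$ insertions, the filled positions are $\{\tilde{\Pi}(1),\ldots,\tilde{\Pi}(n)\}$; by the definition of $H_n$, their maximum is $n + H_n$, so exactly $H_n$ positions in $[1, n + H_n]$ are empty, and everything beyond $n + H_n$ is empty as well. At step $n+1$, the insertion picks the $Z_{n+1}$-st empty position. If $Z_{n+1} \le H_n$, the inserted value lies in $[1, n + H_n]$ and the running maximum is unchanged, giving $H_{n+1} = (n + H_n) - (n+1) = H_n - 1$; if $Z_{n+1} > H_n$, the inserted value equals $(n + H_n) + (Z_{n+1} - H_n) = n + Z_{n+1}$, giving $H_{n+1} = Z_{n+1} - 1$. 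Both cases combine into $H_{n+1} = \max(H_n, Z_{n+1}) - 1$, which is precisely the recursion defining $M_{n+1}$.

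Since $\{Z_i\}$ are i.i.d.\ Geom$(1-q)$, the process $\{H_n\}_{n \ge 0}$ has the same law as $\{M_n\}_{n \ge 0}$ started at $0$, and analogously $\{H'_n\}$ has the law of $\{M'_n\}$; moreover, by the independence of $\tilde{\Pi}$ and $\tilde{\Pi}'$, the pair $(H_n, H'_n)_{n \ge 0}$ has the law of the product chain $\{M^{\otimes}_n\}$ started at $(0,0)$. Finally, the event $H_n = 0$ is equivalent to $\max_{k \le n} \tilde{\Pi}(k) = n$, which (since $\tilde{\Pi}$ restricted to $[n]$ has $n$ distinct positive values) is equivalent to $\{\tilde{\Pi}(k) : k \in [n]\} = [n]$. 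The analogous statement holds for $H'_n$ and $\tilde{\Pi}'$. Therefore
\[
T_1 = \min\{n > 0 : H_n = 0,\ H'_n = 0\} \stackrel{d}{=} \min\{n > 0 : M^{\otimes}_n = (0,0)\} = R^+_0,
\]
and by stationarity of the regenerative structure each $X_i = T_i - T_{i-1}$ has the same distribution as $R^+_0$, which is the claim. The only part that requires care is the case analysis for the transition of $H_n$; once that is written out, the remainder is bookkeeping.
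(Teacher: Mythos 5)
Your proof is correct and follows essentially the same route as the paper's: both identify $M_n$ with $\max_{1\le k\le n}\tilde{\Pi}(k)-n$ under the coupling by the shared geometric sequences, and conclude from the definitions of $T_1$ and $R^+_0$. The only difference is that you write out the case analysis verifying the recursion $H_{n+1}=\max(H_n,Z_{n+1})-1$, which the paper leaves as ``easy to verify.''
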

\begin{proof}
We couple the Markov chain $M^{\otimes}_n = (M_n, M'_n)$ with the infinite Mallows permutations $\tilde{\Pi}$, $\tilde{\Pi'}$ with parameters $q$ and $q'$ respectively by using the same i.i.d.\,sequences $\{Z_i\}$ and $\{Z'_i\}$ with $Z_i \sim\ $Geom($1-q$) and $Z'_i \sim\ $Geom($1-q'$). Under this coupling, it is easy to verify that
\[
M_n = \max_{1 \le j \le n}\big\{\tilde{\Pi}(j)\big\} - n, \qquad M'_n = \max_{1 \le j \le n}\big\{\tilde{\Pi'}(j)\big\} - n.
\]
The lemma follows from the definition of $T_1$ and $R^+_0$.
\end{proof}

We analyze the Markov chain $M^{\otimes}_n$ and the first return time $R^+_0$ in the next few lemmas.

\begin{lemma}\label{LE52}
The Markov chain $M^{\otimes}_n$ is a positive recurrent Markov chain with unique stationary distribution $\nu = (\nu_{i,j})_{i, j \ge 0}$ where
\[
\nu_{i,j} \coloneqq \frac{q^i}{\mathcal{Z}(q)\prod_{k = 1}^{i}\big(1-q^k\big)}\cdot
\frac{(q')^j}{\mathcal{Z}(q')\prod_{k = 1}^{j}\big(1-(q')^k\big)}.
\]
Here $\mathcal{Z}(q) \coloneqq 1/\prod_{k = 1}^{\infty}\big(1-q^k\big)$.
\end{lemma}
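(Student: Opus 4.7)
The plan is to exploit the independence of the two coordinates and reduce to a one-dimensional problem. Since $\{M_n\}$ is driven by the sequence $\{Z_n\}$ and $\{M'_n\}$ by the independent sequence $\{Z'_n\}$, the chains $\{M_n\}$ and $\{M'_n\}$ are independent Markov chains on $\mathbb{N}\cup\{0\}$, and the product chain $\{M^{\otimes}_n\}$ is positive recurrent with product stationary distribution $\mu \otimes \mu'$ as soon as each marginal is positive recurrent with stationary distribution $\mu$ (resp.\ $\mu'$) of the factorized form appearing in $\nu_{i,j}$. Hence it suffices to analyze the chain $\{M_n\}$ alone and show that it has the claimed marginal $\mu_i := q^i/\bigl[\mathcal{Z}(q)\prod_{k=1}^i(1-q^k)\bigr]$ as its unique stationary distribution.

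From the update rule $M_n = \max\{M_{n-1}, Z_n\} - 1$ with $Z_n \sim \mathrm{Geom}(1-q)$, I would first write out the one-step transitions: from $i \geq 1$ the chain moves to $i-1$ with probability $\p(Z_n \leq i) = 1-q^i$ and to any $j \geq i$ with probability $(1-q)q^j$; from $i = 0$ it moves to each $j \geq 0$ with probability $(1-q)q^j$. Stationarity $(\mu P)(j) = \mu_j$ is then verified by direct computation. Only states $i \in \{0,1,\ldots,j,j+1\}$ contribute, so
\[
(\mu P)(j) = \mu_{j+1}(1-q^{j+1}) + (1-q)q^j \sum_{i=0}^j \mu_i.
\]
Using $\mu_{j+1} = \mu_j \cdot q/(1-q^{j+1})$, the first term becomes $q\mu_j$, and the stationarity equation collapses to the $q$-Pochhammer identity
\[
\sum_{i=0}^j \frac{q^i}{\prod_{k=1}^i(1-q^k)} = \frac{1}{\prod_{k=1}^j(1-q^k)},
\]
which follows by a one-line induction on $j$, the inductive step using $\frac{1}{\prod_{k=1}^{j-1}(1-q^k)} + \frac{q^j}{\prod_{k=1}^j(1-q^k)} = \frac{(1-q^j)+q^j}{\prod_{k=1}^j(1-q^k)}$. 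Sending $j \to \infty$ in the same identity and recalling $\mathcal{Z}(q) = \prod_{k=1}^\infty(1-q^k)^{-1}$ yields $\sum_{i \geq 0}\mu_i = 1$, so $\mu$ is genuinely a probability measure.

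Finally, I would establish irreducibility of $\{M^{\otimes}_n\}$: from $(0,0)$ the chain reaches any $(i,j)$ in one step with probability $(1-q)q^i(1-q')(q')^j > 0$, and from any $(i,j)$ the chain returns to $(0,0)$ in at most $\max(i,j)$ steps by having both coordinates step down simultaneously, each step having positive probability $1-q^i$, $1-(q')^j$ at every nonzero state (and positive probability of staying put at a coordinate already equal to $0$). Existence of the explicit stationary probability measure $\nu = \mu \otimes \mu'$ together with irreducibility then gives positive recurrence and uniqueness by the standard criterion that an irreducible Markov chain on a countable state space admitting a stationary probability measure is positive recurrent with that measure as its unique stationary distribution. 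The only nontrivial ingredient is the $q$-Pochhammer partial-sum identity above; the rest is routine bookkeeping.
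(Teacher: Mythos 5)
Your proof is correct. The paper's own proof is a one-line citation: it observes that $M^{\otimes}_n$ is the product of the independent chains $M_n$ and $M'_n$ and invokes Lemma~4.2 of Basu--Bhatnagar for the one-dimensional statement that each marginal chain is positive recurrent with stationary distribution $\mu_i = q^i/\bigl[\mathcal{Z}(q)\prod_{k=1}^{i}(1-q^k)\bigr]$. You take the same first step (reduction to the marginals via the product structure) but then prove the one-dimensional result from scratch instead of citing it: your transition probabilities from the update $M_n=\max\{M_{n-1},Z_n\}-1$ are right (to $i-1$ with probability $1-q^i$, to $j\ge i$ with probability $(1-q)q^{j}$), the stationarity equation does collapse, after cancelling $q\mu_j$, to the $q$-Pochhammer partial-sum identity $\sum_{i=0}^{j} q^i/\prod_{k=1}^{i}(1-q^k)=1/\prod_{k=1}^{j}(1-q^k)$, whose $j\to\infty$ limit is Euler's identity and gives normalization, and you correctly supply the separate irreducibility argument for the product chain (which is needed, since irreducibility of the factors alone would not suffice) before applying the standard existence-implies-positive-recurrence-and-uniqueness criterion. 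What your version buys is self-containedness and an explicit identification of the only genuinely nontrivial ingredient; what the paper's version buys is brevity, at the cost of sending the reader to the reference.
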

Note that $\mathcal{Z}(q)$ is finite since $\lim_{k\to\infty}\log{\big(\frac{1}{1-q^k}\big)}/q^k = 1$.
\begin{proof}
The claim follows directly from Lemma 4.2 in \cite{Basu} and the fact that $M^{\otimes}_n$ is the product chain of $M_n$ and $M'_n$.
\end{proof}

Let $R_t$ denote the first time the chain $M^{\otimes}_n$ to reach a state $(i, j)$ such that $i+j \le t$. In the following, we shall denote by $\E_{i,j}$ the expectation with respect to the chain started at the state $(i, j)$ and $\E_{\nu}$ denote the expectation with respect to the chain started from the stationary distribution.
\begin{lemma}\label{LE53}
For any $i, j \ge 0$ with $i+j > 0$, we have
\[
\E_{i, j}R_{i + j -1} \ge \E_{i, j+1}R_{i + j}, \qquad \E_{i, j}R_{i+j-1} \ge \E_{i + 1, j}R_{i + j}.
\]
\end{lemma}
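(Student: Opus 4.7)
The plan is to prove both inequalities by a monotone coupling of two copies of the product chain $M^{\otimes}$, driven by a common pair of geometric increment sequences. I focus on the first inequality; the second is symmetric.

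On a common probability space, take independent i.i.d.\ sequences $\{Z_n\}_{n\ge 1}$ and $\{Z'_n\}_{n\ge 1}$ with $Z_n\sim\,$Geom$(1-q)$ and $Z'_n\sim\,$Geom$(1-q')$, and run two chains $(M^{(1)}_n,{M'}^{(1)}_n)$ and $(M^{(2)}_n,{M'}^{(2)}_n)$ started at $(i,j+1)$ and $(i,j)$ respectively, using the same $\{Z_n\}$ to update the first coordinates and the same $\{Z'_n\}$ to update the second coordinates via the rule $x\mapsto \max\{x,z\}-1$. Since $M^{(1)}_0 = M^{(2)}_0 = i$ and the first coordinates use identical driving sequences, $M^{(1)}_n = M^{(2)}_n$ for all $n$. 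For the second coordinates, the initial gap ${M'}^{(1)}_0 - {M'}^{(2)}_0 = 1$ stays in $\{0,1\}$ at every step: a short case analysis on whether $Z'_n \le {M'}^{(2)}_{n-1}$, $Z'_n = {M'}^{(2)}_{n-1}+1$, or $Z'_n \ge {M'}^{(2)}_{n-1}+2$ shows that either the gap remains $1$ or the two coordinates coalesce (after which they remain equal forever). This uses only the fact that $x\mapsto \max\{x,z\}-1$ is monotone and $1$-Lipschitz in $x$.

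Given the coupling, let $\tau_k$ denote the first time chain $k$ enters its target set, namely $\{(k,l):k+l\le i+j\}$ for chain $1$ and $\{(k,l):k+l\le i+j-1\}$ for chain $2$. At time $\tau_2$ I have
\[
M^{(1)}_{\tau_2} + {M'}^{(1)}_{\tau_2} \;\le\; M^{(2)}_{\tau_2} + {M'}^{(2)}_{\tau_2} + 1 \;\le\; (i+j-1)+1 = i+j,
\]
so $\tau_1 \le \tau_2$ pointwise, and taking expectations yields $\E_{i,j+1} R_{i+j} \le \E_{i,j} R_{i+j-1}$. The second inequality follows from the symmetric coupling of chains started at $(i+1,j)$ and $(i,j)$, where now the second coordinates are identical for all $n$ and the first coordinates differ by at most $1$.

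The only point requiring any care is the preservation of the $\{0,1\}$-valued gap under the common update, which is the elementary case check sketched above; beyond that I do not foresee any obstacle, and positive recurrence from Lemma \ref{LE52} guarantees all expectations are finite.
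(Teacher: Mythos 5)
Your proposal is correct and is essentially identical to the paper's proof: the paper also couples the two chains via the common driving sequences $\{Z_n\}$, $\{Z'_n\}$, notes that the first coordinates coincide while the second coordinates differ by at most $1$ (so the coordinate sums differ by at most $1$), and concludes the pointwise domination of hitting times. Your explicit case analysis of the gap preservation just fills in what the paper dismisses as ``easily seen from the one step transition rule.''
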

\begin{proof}
By symmetry of $M_n$ and $M'_n$, it suffices to show the first inequality. We couple two chains $(M_n, M'_n)$ and $(\tilde{M}_n, \tilde{M}'_n)$ which start from $(i, j)$ and $(i, j+1)$ respectively by using the same sequences $\{Z_i\}$ and $\{Z'_i\}$. It is easily seen from the one step transition rule that, at any time $n$, we have $M_n = \tilde{M}_n$ and $0 \le \tilde{M}'_n - M'_n \le 1$. Thus we have
\[
0 \le (\tilde{M}_n + \tilde{M}'_n) - (M_n + M'_n) \le 1, \quad \forall n \ge 0.
\]
Therefore, $M_n + M'_n \le i+j-1$ implies $\tilde{M}_n + \tilde{M}'_n \le i + j$.
\end{proof}

An immediate corollary of Lemma \ref{LE53} is the following.
\begin{corollary}\label{LE54}
For any $i, j \ge 0$ with $i+j > 0$, 
\[
\max\{\E_{0, 1}R_0,\ \E_{1, 0}R_0\} \ge \E_{i, j}R_{i+j-1}.
\]
\end{corollary}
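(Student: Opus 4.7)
The plan is to derive Corollary \ref{LE54} as an immediate iterated consequence of Lemma \ref{LE53}. Reading the two inequalities of Lemma \ref{LE53} as "moving one unit up or to the right in the $(i,j)$-lattice, and correspondingly raising the threshold by one, only decreases the expected hitting time," the strategy is to pick a monotone lattice path from a base state to an arbitrary target $(i,j)$ and telescope.

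Concretely, I would split into two cases according to whether $j\ge 1$ or $j=0$. In the case $j\ge 1$, I would start at $(0,1)$ and first apply the first inequality of Lemma \ref{LE53} repeatedly (at states $(0,1),(0,2),\ldots,(0,j-1)$, all of which satisfy $i+j>0$) to climb the second coordinate:
\[
\E_{0,1}R_0 \;\ge\; \E_{0,2}R_1 \;\ge\; \cdots \;\ge\; \E_{0,j}R_{j-1}.
\]
Then, with $j$ fixed, I would apply the second inequality of Lemma \ref{LE53} $i$ times (at states $(0,j),(1,j),\ldots,(i-1,j)$, again all with positive coordinate sum) to reach $(i,j)$:
\[
\E_{0,j}R_{j-1} \;\ge\; \E_{1,j}R_{j} \;\ge\; \cdots \;\ge\; \E_{i,j}R_{i+j-1}.
\]
Concatenating these chains gives $\E_{0,1}R_0 \ge \E_{i,j}R_{i+j-1}$. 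In the remaining case $j=0$ we must have $i\ge 1$ since $i+j>0$, and the symmetric argument starting from $(1,0)$ and iterating the second inequality yields $\E_{1,0}R_0 \ge \E_{i,0}R_{i-1}$. Taking the maximum over the two starting states covers both cases and proves the corollary.

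There is really no substantial obstacle here beyond bookkeeping: the only subtlety is that Lemma \ref{LE53} requires the source state to satisfy $i+j>0$ (otherwise $R_{i+j-1}$ is undefined), so I must ensure the monotone lattice path I choose never passes through the origin. This is automatic with the paths above, since they start at either $(0,1)$ or $(1,0)$ and only increase coordinates, so the coordinate sum is always at least $1$ along the path.
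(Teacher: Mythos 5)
Your proof is correct and matches the paper's intent: the paper presents Corollary \ref{LE54} as an immediate consequence of Lemma \ref{LE53} without writing out the iteration, and your telescoping along a monotone lattice path from $(0,1)$ or $(1,0)$ is exactly the argument being left implicit. The care you take to keep the coordinate sum positive along the path is the right (and only) point to check.
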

The positive recurrence of the chain $M^{\otimes}_n$ implies that $\E_{0, 1}R_0$ and $\E_{1, 0}R_0$ are finite. Let $\eta \coloneqq \max\{\E_{0, 1}R_0,\ \E_{1, 0}R_0\}$. 

\begin{lemma}\label{LE55}
For any $i, j \ge 0$ with $i+j > 0$, we have
\[
\E_{i, j}R_{0} \le (i+j) \eta.
\]
\end{lemma}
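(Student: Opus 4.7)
The plan is to prove the bound by induction on the value of $i+j$, using Corollary \ref{LE54} to handle the ``one-step-down'' cost and the strong Markov property together with the inductive hypothesis to handle everything afterward.

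For the base case $i+j=1$, we have $(i,j)\in\{(0,1),(1,0)\}$, and in both cases $\E_{i,j}R_0\le\eta=(i+j)\eta$ directly from the definition of $\eta$. For the inductive step, fix $k\ge 2$ and assume the statement for every state with coordinate sum strictly less than $k$; consider an arbitrary $(i,j)$ with $i+j=k$. Let $(I,J)\coloneqq M^{\otimes}_{R_{k-1}}$ denote the state of the chain at the first time its coordinate sum drops to at most $k-1$. By positive recurrence of $M^{\otimes}_n$ (Lemma \ref{LE52}) this stopping time is almost surely finite, and by Corollary \ref{LE54} its expectation satisfies $\E_{i,j}R_{k-1}\le\eta$. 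Since $R_0\ge R_{k-1}$ and after time $R_{k-1}$ the chain still has to reach $(0,0)$, the strong Markov property at $R_{k-1}$ gives
\[
\E_{i,j}R_0=\E_{i,j}R_{k-1}+\E_{i,j}\!\left[\E_{(I,J)}R_0\right].
\]
Now $I+J\le k-1$ by definition of $R_{k-1}$. If $(I,J)=(0,0)$, then $\E_{(I,J)}R_0=0$; otherwise the inductive hypothesis yields $\E_{(I,J)}R_0\le(I+J)\eta\le(k-1)\eta$. Either way, $\E_{(I,J)}R_0\le(k-1)\eta$ pointwise, so taking expectations gives $\E_{i,j}R_0\le\eta+(k-1)\eta=k\eta$, completing the induction.

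I expect no serious obstacle here: the entire lemma is essentially a telescoping of Corollary \ref{LE54}, where each ``layer'' of coordinate-sum reduction costs at most $\eta$ in expectation. The only mild subtlety is that the chain at time $R_{k-1}$ can land at a state whose sum is strictly less than $k-1$ (the sum can decrease by more than one per step), so one must bound the contribution from $(I,J)$ uniformly by $(k-1)\eta$ rather than try to argue level-by-level; this is handled cleanly by the inductive hypothesis applied pointwise inside the outer expectation, together with the convention that $R_0=0$ when started at $(0,0)$.
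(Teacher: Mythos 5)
Your proof is correct and follows essentially the same route as the paper: induction on the coordinate sum, decomposing at the first time $R_{i+j-1}$ the sum drops, bounding that stage by $\eta$ via Corollary \ref{LE54}, and applying the inductive hypothesis pointwise to the landing state via the strong Markov property. The paper's version writes the decomposition as an explicit sum over the landing time and state, but the argument is identical.
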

\begin{proof}
We proof this lemma by induction on the sum of $i$ and $j$. When $i + j = 1$, the claim holds trivially. Suppose the claim holds for any $\{i, j \ge 0 : i + j \le k\}$. Given $s, t$ with $s + t = k+1$, by the Markov property, we have
\begin{align*}
\E_{s, t}R_{0} &= \sum_{n \ge 1}\sum_{i + j \le k}
                  \left(n + \E_{i, j}R_{0}\right)\cdot \p_{s, t}\left(R_{k} = n, M^{\otimes}_n = (i, j)\right)\\
               &\le \sum_{n \ge 1}\sum_{i + j \le k}
                  \left(n + k\eta\right)\cdot \p_{s, t}\left(R_{k} = n, M^{\otimes}_n = (i, j)\right)\\
               &= k\eta +\sum_{n \ge 1}\sum_{i + j \le k}
                                 n\cdot \p_{s, t}\left(R_{k} = n, M^{\otimes}_n = (i, j)\right)\\
               &= k\eta +\sum_{n \ge 1} n\cdot \p_{s, t}\left(R_{k} = n\right)\\
               &= k\eta + \E_{s,t}R_k\\
               &\le (k + 1)\eta
\end{align*}
Here the first inequality follows from induction hypothesis and the last inequality follows from Corollary \ref{LE54}.
\end{proof}

\begin{lemma}\label{LE56}
For the Markov chain $M^{\otimes}_n$, $\E_{\nu}R_0 < \infty$.
\end{lemma}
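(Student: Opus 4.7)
The plan is to bound $\E_{\nu} R_0$ by integrating the pointwise bound from Lemma \ref{LE55} against the stationary distribution, and then use the explicit product form of $\nu$ from Lemma \ref{LE52} to reduce the problem to showing that each marginal of $\nu$ has a finite first moment. Concretely, I would start by writing
\[
\E_{\nu} R_0 \;=\; \sum_{i, j \ge 0} \nu_{i,j}\, \E_{i,j} R_0 \;\le\; \eta \sum_{i, j \ge 0} (i+j)\, \nu_{i,j},
\]
where the inequality uses Lemma \ref{LE55} (with the convention $\E_{0,0} R_0 = 0$, which matches $(0+0)\eta$), and $\eta = \max\{\E_{0,1} R_0, \E_{1,0} R_0\} < \infty$ by the positive recurrence established in Lemma \ref{LE52}.

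Next, since $\nu_{i,j} = \pi_i\,\pi'_j$ factorizes as a product of the two marginals
\[
\pi_i \;=\; \frac{q^i}{\mathcal{Z}(q) \prod_{k=1}^{i}(1-q^k)}, \qquad \pi'_j \;=\; \frac{(q')^j}{\mathcal{Z}(q') \prod_{k=1}^{j}(1-(q')^k)},
\]
the bound above decouples into
\[
\E_{\nu} R_0 \;\le\; \eta\Bigl(\sum_{i \ge 0} i\, \pi_i + \sum_{j \ge 0} j\, \pi'_j\Bigr),
\]
so it suffices to verify that each of these one-dimensional expectations is finite.

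For the first sum, observe that the sequence $\prod_{k=1}^{i}(1-q^k)$ is decreasing in $i$ and converges to the positive limit $\prod_{k=1}^{\infty}(1-q^k) = 1/\mathcal{Z}(q) > 0$, since $0 < q < 1$ ensures $\sum_k q^k < \infty$. Consequently there is a constant $c = c(q) > 0$ with $\pi_i \le c\, q^i$ for every $i \ge 0$, whence $\sum_{i \ge 0} i\, \pi_i \le c \sum_{i \ge 0} i\, q^i < \infty$. The same argument with $q'$ in place of $q$ gives $\sum_{j \ge 0} j\, \pi'_j < \infty$, completing the proof.

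I do not expect any serious obstacle here: Lemma \ref{LE55} already does the heavy lifting of turning a hitting-time estimate into a linear function of the starting state, and the only remaining ingredient is the (straightforward) geometric tail bound on the marginals of $\nu$. The mild technical point worth highlighting is the factorization $\nu_{i,j} = \pi_i \pi'_j$, which is what lets the $(i+j)$ moment split cleanly into two independent one-dimensional sums.
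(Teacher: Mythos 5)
Your proof is correct and follows essentially the same route as the paper: both apply Lemma \ref{LE55} to bound $\E_{i,j}R_0$ by $(i+j)\eta$, use the product form of $\nu$ to split the sum into the two marginal first moments, and finish with a geometric tail bound on the marginals coming from $\prod_{k\ge 1}(1-q^k)>0$. No gaps.
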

\begin{proof}
By Lemma 4.2 in \cite{Basu}, the stationary distributions of $M_n$ and $M'_n$ are
\begin{align}
\mu_{i} &\coloneqq \sum_{j = 0}^{\infty}\nu_{i, j} = \frac{q^i}{\mathcal{Z}(q)\prod_{k = 1}^{i}\big(1-q^k\big)},\\
\mu'_{j} &\coloneqq \sum_{i=0}^{\infty} \nu_{i, j} = \frac{(q')^j}{\mathcal{Z}(q')\prod_{k = 1}^{j}\big(1-(q')^k\big)}.
\end{align}

Note that we have
\begin{align}
\E_{\nu}R_0 = \sum_{i, j \ge 0}\nu_{i, j}\E_{i, j}R_0 
            &\le \sum_{i, j \ge 0}\nu_{i, j} (i + j)\eta \label{eq:le56}\\
            &= \eta \sum_{i = 0}^{\infty}i\mu_i + \eta\sum_{j = 0}^{\infty} j\mu'_j. \nonumber
\end{align}
By the definition of $\mathcal{Z}(q)$, we have $\mu_i < q^i/\mathcal{Z}(q)^2$. Hence $\sum_{i = 0}^{\infty} i\mu_i < \infty$. Similarly we also have $\sum_{j = 0}^{\infty} j\mu'_j < \infty$. Therefore, by (\ref{eq:le56}), $\E_{\nu}R_0 < \infty$.
\end{proof}

In the next lemma, we show that the first and second moments of the first return time $R^+_0$ are finite by using Kac's formula.
\begin{lemma}\label{LE57}
\[
\E_{0, 0} R^+_0 < \infty,\qquad \E_{0, 0} (R^+_0)^2 < \infty
\]
\end{lemma}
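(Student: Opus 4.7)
My plan is to invoke Kac's recurrence formula for the positive recurrent chain $M^\otimes_n$ identified in Lemma~\ref{LE52}, and then derive the second moment via the functional form of the Kac identity. For the first moment, since $\nu_{(0,0)} = 1/\bigl(\mathcal{Z}(q)\mathcal{Z}(q')\bigr)$ is strictly positive by Lemma~\ref{LE52}, Kac's formula immediately gives
$$
\E_{0,0}[R_0^+] \;=\; \frac{1}{\nu_{(0,0)}} \;=\; \mathcal{Z}(q)\mathcal{Z}(q') \;<\; \infty,
$$
which is the first assertion.

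For the second moment, I will use the functional Kac identity
$$
\E_\nu[F] \;=\; \nu_{(0,0)}\, \E_{0,0}\!\!\left[\sum_{n=0}^{R_0^+ - 1} F\circ\theta^n\right],
$$
valid for any nonnegative functional $F$ of the trajectory (with $\theta$ the time-shift on paths), applied to $F = R_0$. Along a cycle started at $(0,0)$ under $\p_{0,0}$, we have $R_0\circ\theta^0 = 0$ and $R_0\circ\theta^n = R_0^+ - n$ for $1 \le n \le R_0^+ - 1$, so the inner sum telescopes to $R_0^+(R_0^+-1)/2$. Substituting and using $\E_{0,0}[R_0^+] = 1/\nu_{(0,0)}$ to rearrange yields the key identity
$$
\E_{0,0}\!\bigl[(R_0^+)^2\bigr] \;=\; \bigl(1 + 2\E_\nu[R_0]\bigr)\,\E_{0,0}[R_0^+].
$$
The right-hand side is finite: $\E_{0,0}[R_0^+]$ by the first-moment step, and $\E_\nu[R_0]$ by Lemma~\ref{LE56}.

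The only nontrivial ingredient is the functional form of Kac's identity displayed above, and this is what I expect to spell out most carefully. It follows from the cycle decomposition of the stationary chain at $(0,0)$: under $\p_\nu$ the trajectory splits into i.i.d.\,excursions whose length has the law of $R_0^+$ under $\p_{0,0}$, and the long-run frequency of visits to $(0,0)$ equals $\nu_{(0,0)}$; applying the pointwise ergodic theorem to the running sum of $F\circ\theta^n$ and normalizing by the number of completed cycles produces the displayed identity. Once this is in hand, the rest of the argument is just the two-line telescoping computation above.
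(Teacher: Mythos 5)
Your proposal is correct and follows essentially the same route as the paper: the first moment via Kac's formula $\E_{0,0}R_0^+ = 1/\nu_{0,0} = \mathcal{Z}(q)\mathcal{Z}(q') < \infty$, and the second moment via the identity $\E_{0,0}(R_0^+)^2 = (1+2\E_\nu R_0)/\nu_{0,0}$ combined with Lemma \ref{LE56}. The only difference is that you derive this identity from the cycle (functional Kac) decomposition with a telescoping sum, whereas the paper simply cites it from Aldous; your telescoping computation is correct.
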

\begin{proof}
It is a basic fact about Markov chains that $\E_{0, 0} R^+_0 = \frac{1}{\nu_{0, 0}}$. By Lemma \ref{LE52} and the finiteness of $\mathcal{Z}(q)$ and $\mathcal{Z}(q')$, we have $\frac{1}{\nu_{0, 0}} = \mathcal{Z}(q)\cdot\mathcal{Z}(q') < \infty$. The finiteness of the second moment of $R^+_0$ follows from Lemma \ref{LE56} and the following consequence of Kac's formula (cf. (2.21) in \cite{Aldous}),
\[
\E_{0, 0}(R^+_0)^2 = \frac{2E_{\nu}(R_0) + 1}{\nu_{0, 0}}.
\]
\end{proof}

In the remainder of this section, we complete the proof of Theorem \ref{M5} by using the following version of central limit theorem due to Anscombe.
\begin{theorem}[Anscombe's Theorem]\label{anscombe}
Let $\{X_i\}_{i \ge 1}$ be a sequence of i.i.d. random variables with mean 0 and positive, finite variance $\sigma^2$. For $n \ge 1$, let $Q_n \coloneqq \sum_{i = 1}^{n} X_i$. Suppose $\{N(t), t \ge 0\}$ is a family of positive integer-valued random variables such that for some $0 < c < \infty$, 
\[
\frac{N(t)}{t}\overset{p}{\longrightarrow} c \quad \text{ as } t \to \infty.
\]
Then,
\[
\frac{Q_{N(t)}}{\sqrt{t}}\overset{d}{\longrightarrow} \mathcal{N}(0, c\sigma^2) \quad \text{ as } t \to \infty.
\]
\end{theorem}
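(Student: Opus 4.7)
The plan is to reduce Anscombe's theorem to the classical Central Limit Theorem via Slutsky's theorem, with the gap bridged by a uniform fluctuation estimate for the partial-sum process. First I would set $n(t) \coloneqq \lfloor ct \rfloor$ and apply the ordinary CLT along this deterministic subsequence to obtain
\[
\frac{Q_{n(t)}}{\sqrt{t}} \;=\; \sqrt{\frac{n(t)}{t}} \cdot \frac{Q_{n(t)}}{\sqrt{n(t)}} \overset{d}{\longrightarrow} \mathcal{N}(0, c\sigma^2).
\]
By Slutsky's theorem, it then suffices to show that
\[
\frac{Q_{N(t)} - Q_{n(t)}}{\sqrt{t}} \overset{p}{\longrightarrow} 0.
\]

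The core step is a maximal inequality. For any $\epsilon > 0$ and $\delta > 0$, Kolmogorov's inequality applied separately to the one-sided random walks $\bigl(Q_{n(t)+j} - Q_{n(t)}\bigr)_{j \ge 0}$ and $\bigl(Q_{n(t)} - Q_{n(t)-j}\bigr)_{j \ge 0}$ yields
\[
\p\!\left(\max_{|k - n(t)| \le \delta t}\, \bigl|Q_k - Q_{n(t)}\bigr| > \epsilon \sqrt{t}\right) \;\le\; \frac{2 \delta t\, \sigma^2}{\epsilon^2 t} \;=\; \frac{2 \delta \sigma^2}{\epsilon^2},
\]
which can be made arbitrarily small by choosing $\delta$ small. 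Since the hypothesis $N(t)/t \overset{p}{\longrightarrow} c$ implies $\p\!\left(|N(t) - n(t)| > \delta t\right) \to 0$ for every $\delta > 0$, a union bound gives
\[
\p\!\left(\bigl|Q_{N(t)} - Q_{n(t)}\bigr| > \epsilon \sqrt{t}\right) \;\le\; \p\!\left(|N(t) - n(t)| > \delta t\right) \;+\; \frac{2 \delta \sigma^2}{\epsilon^2},
\]
and sending $t \to \infty$ followed by $\delta \to 0$ establishes the required convergence in probability. Combined with the CLT along $n(t)$ and Slutsky's theorem, the conclusion follows.

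The conceptual hurdle—mild but worth isolating—is that $N(t)$ is permitted to depend on the $X_i$, so one cannot simply condition on $N(t)$ and apply a CLT to the conditional distribution of $Q_{N(t)}$. The maximal-inequality route circumvents this by controlling the fluctuations of $(Q_k)_k$ \emph{uniformly} over an interval of indices around $n(t)$, effectively decoupling the random stopping index from the increment structure. Essentially no finer information about $N(t)$ than the weak law $N(t)/t \overset{p}{\longrightarrow} c$ enters the argument.
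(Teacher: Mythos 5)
Your proof is correct: the reduction to the classical CLT along the deterministic index $n(t)=\lfloor ct\rfloor$, the Kolmogorov maximal inequality applied to the two one-sided walks emanating from $n(t)$ to control $\max_{|k-n(t)|\le\delta t}|Q_k-Q_{n(t)}|$, the union bound with the event $\{|N(t)-n(t)|>\delta t\}$, and the order of limits ($t\to\infty$ first, then $\delta\to 0$) are all handled properly, and Slutsky's theorem finishes the argument. There is nothing to compare against in the paper itself, which states Anscombe's theorem without proof as a classical result imported from the literature; your argument is the standard proof of that result.
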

Recall that in section \ref{regen}, we define $X_i$ to be the inter-renewal times and $S_n = \min\{j: \sum_{i = 1}^j X_i \ge n\}$.
\begin{lemma}\label{LE58}
For $\nu_{0, 0}$ as defined in Lemma \ref{LE52}, 
\[
\frac{S_n}{n} \overset{a.s.}{\longrightarrow} \nu_{0, 0}.
\]
\end{lemma}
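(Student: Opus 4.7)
The plan is to invoke the elementary renewal theorem (or rather its strong-law sharpening): the counting function associated with a renewal process with finite-mean inter-arrival times satisfies $S_n/n \to 1/\E X_1$ almost surely. All the ingredients are already in place.

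First, I would observe that by construction $\{X_i\}_{i\ge 1}$ is an i.i.d.\ sequence; indeed Section \ref{regen} defines $X_j = T_j - T_{j-1}$ using the regenerative structure of $\{(\tilde{\Pi}(i)-i,\tilde{\Pi}'(i)-i)\}$, and by Lemma \ref{LE51} each $X_j$ is distributed as the return time $R_0^+$ of the product chain $M^{\otimes}$ started at $(0,0)$. By Lemma \ref{LE57} and the identity $\E_{0,0}R_0^+ = 1/\nu_{0,0}$ (which is just Kac's formula, already used in that lemma), we have
\[
\E X_1 = \frac{1}{\nu_{0,0}} \in (0,\infty).
\]
In particular $X_1 < \infty$ almost surely, so $T_j = \sum_{i=1}^j X_i \to \infty$ a.s., which guarantees $S_n$ is well-defined for every $n$ and $S_n \to \infty$ a.s.

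Second, I would apply the strong law of large numbers to $\{X_i\}$:
\[
\frac{T_j}{j} = \frac{1}{j}\sum_{i=1}^j X_i \xrightarrow{a.s.} \frac{1}{\nu_{0,0}} \quad \text{as } j \to \infty.
\]
From the definition of $S_n$ we have the sandwich $T_{S_n - 1} < n \le T_{S_n}$ for all $n \ge 1$ (with the convention $T_0 = 0$). Dividing by $S_n$ gives
\[
\frac{T_{S_n-1}}{S_n-1}\cdot\frac{S_n-1}{S_n} < \frac{n}{S_n} \le \frac{T_{S_n}}{S_n}.
\]
Since $S_n \to \infty$ a.s., both the left and right sides converge a.s.\ to $1/\nu_{0,0}$ (the left side because $(S_n-1)/S_n \to 1$ and $T_{S_n-1}/(S_n-1) \to 1/\nu_{0,0}$ by the SLLN applied along the random subsequence $S_n - 1$). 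Therefore $n/S_n \to 1/\nu_{0,0}$ a.s., which is equivalent to $S_n/n \to \nu_{0,0}$ a.s.

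There is no real obstacle here: the lemma is a direct consequence of the SLLN together with the finiteness of $\E X_1$ established in Lemma \ref{LE57}. The only point requiring a sentence of justification is that $S_n \to \infty$ a.s., which follows because $T_j \to \infty$ a.s.\ (each $X_i$ being a proper, integrable random variable) so $S_n$ cannot remain bounded.
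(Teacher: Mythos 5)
Your proof is correct and follows essentially the same route as the paper: the sandwich $T_{S_n-1} < n \le T_{S_n}$ divided by $S_n$, combined with the strong law of large numbers for the i.i.d.\ inter-renewal times $X_i$ (whose finite mean $1/\nu_{0,0}$ comes from Lemma \ref{LE57} via Kac's formula). Your write-up is in fact slightly more careful than the paper's, since you explicitly justify that $S_n \to \infty$ a.s.\ before applying the SLLN along the random index.
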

\begin{proof}
Observer that
\[
\frac{\sum_{j = 1}^{S_n - 1} X_j}{S_n} \le \frac{n}{S_n} \le \frac{\sum_{j = 1}^{S_n} X_j}{S_n}.
\]
As $n\to\infty$, by the strong law of large numbers, both the left and right hand sides of the above inequality converge almost surely to $\nu_{0, 0}^{-1}$.
\end{proof}

As our last step in preparation for the proof of Theorem \ref{M5}, we introduce the following basic result (cf. Lemma 5.5 in \cite{Basu}).
\begin{lemma}\label{LE59}
Let $W_1, W_2, \ldots$ be an i.i.d.\,sequence of non-negative random variables with $\E W_i^2 < \infty$. Then we have for any constant $c > 0$,
\[
\frac{\max_{1 \le i \le cn}W_i}{\sqrt{n}} \overset{p}{\longrightarrow} 0.
\]
\end{lemma}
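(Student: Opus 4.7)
My plan is to use a union bound together with the standard tail estimate that follows from finite second moment.

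First, fix $\varepsilon > 0$. By a union bound over the $\lfloor cn\rfloor$ variables and the fact that the $W_i$ are identically distributed,
\[
\p\!\left(\max_{1 \le i \le cn} W_i > \varepsilon\sqrt{n}\right) \le cn \cdot \p\!\left(W_1 > \varepsilon\sqrt{n}\right) = cn \cdot \p\!\left(W_1^2 > \varepsilon^2 n\right).
\]
The task is therefore to show that $n \cdot \p(W_1^2 > \varepsilon^2 n) \to 0$ as $n \to \infty$.

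The key observation is the following tail fact for integrable non-negative random variables: if $X \ge 0$ with $\E X < \infty$, then $t\,\p(X > t) \to 0$ as $t \to \infty$. This follows from $t\,\p(X > t) \le \E\!\left[X\,\mathds{1}(X > t)\right]$, and the right-hand side tends to $0$ by dominated convergence since $\E X < \infty$. Applying this with $X = W_1^2$, which is integrable by hypothesis, and letting $t = \varepsilon^2 n$, we conclude
\[
\varepsilon^2 n \cdot \p(W_1^2 > \varepsilon^2 n) \longrightarrow 0 \quad \text{as } n \to \infty,
\]
so in particular $cn\cdot \p(W_1^2 > \varepsilon^2 n) \to 0$. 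Combined with the union bound above, $\p(\max_{1\le i\le cn} W_i > \varepsilon\sqrt{n}) \to 0$ for every $\varepsilon > 0$, which is the claimed convergence in probability.

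No step here is a serious obstacle; the only subtlety worth flagging is that a naive application of Markov's inequality in the form $\p(W_1 > \varepsilon\sqrt{n}) \le \E W_1^2/(\varepsilon^2 n)$ gives only an $O(1)$ bound after multiplying by $cn$, so one must invoke the stronger statement $t\,\p(W_1^2 > t) = o(1)$. This upgrade is exactly where the assumption $\E W_1^2 < \infty$ (rather than merely $\E W_1 < \infty$) is used: it allows the threshold to be $\varepsilon\sqrt{n}$ rather than, say, $\varepsilon n$.
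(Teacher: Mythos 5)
Your proof is correct: the union bound reduces the claim to $n\,\p(W_1^2>\varepsilon^2 n)\to 0$, and the observation that $t\,\p(X>t)\le \E[X\,\mathds{1}(X>t)]\to 0$ for integrable $X\ge 0$ (applied to $X=W_1^2$) is exactly the right way to get the needed $o(1/n)$ tail decay that plain Markov would miss. The paper itself gives no proof of this lemma, citing Lemma 5.5 of \cite{Basu} instead, and your argument is the standard one used there, so there is nothing to compare or correct.
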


We assume the notations defined in section \ref{regen}.  Let $a \coloneqq \nu_{0, 0} \E(Y_1)$ and $\delta^2 \coloneqq \var(Y_1 - a X_1)$. Since $1 \le Y_1 \le X_1$, we have $|Y_1 - a X_1| < (1 + a)X_1$. Hence by Lemma \ref{LE51} and Lemma \ref{LE57}, we have $\delta^2 < \infty$. Trivially, $\delta^2 > 0$ since $Y_1$ is clearly not constant. Hence, using Theorem \ref{anscombe} and Lemma \ref{LE58}, we can show the following regenerative version of central limit theorem.

\begin{theorem}[Regenerative CLT]\label{CLT}
Let $(X_i, Y_i)_{i \ge 1}$ and $S_n$ be as defined in section \ref{regen}. Let $Q_{S_n} \coloneqq \sum_{i = 1}^{S_n} Y_i$. Then we have
\[
\frac{Q_{S_n} - an}{\sqrt{n}} \overset{d}{\longrightarrow} \mathcal{N}\left(0, \delta^2\nu_{0,0}\right).
\]
\end{theorem}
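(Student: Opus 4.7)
The plan is to deduce the Regenerative CLT from Anscombe's theorem (Theorem \ref{anscombe}) applied to the centered i.i.d.\ sequence $W_i \coloneqq Y_i - a X_i$, together with the law-of-large-numbers asymptotics for $S_n$ in Lemma \ref{LE58}. First I would verify the hypotheses for this sequence: by Lemma \ref{LE51} and Lemma \ref{LE57} we have $\E X_1 = 1/\nu_{0,0} < \infty$ and $\E X_1^2 < \infty$; since $1 \le Y_i \le X_i$, also $\E Y_i < \infty$ and $\E Y_i^2 < \infty$, so the $W_i$ are i.i.d.\ with finite variance. The mean is $\E W_1 = \E Y_1 - a \E X_1 = \E Y_1 - \nu_{0,0}\E Y_1 \cdot (1/\nu_{0,0}) = 0$, and the variance equals $\delta^2$ by definition. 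Strictly positive variance follows because $Y_1$ is not a.s.\ a deterministic multiple of $X_1$.

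Next I would apply Anscombe's theorem with $N(n) = S_n$ and $c = \nu_{0,0}$, using Lemma \ref{LE58} to check the required convergence $S_n/n \to \nu_{0,0}$ in probability. This yields
\[
\frac{1}{\sqrt{n}}\sum_{i=1}^{S_n} W_i \;\overset{d}{\longrightarrow}\; \mathcal{N}\bigl(0,\,\nu_{0,0}\delta^2\bigr).
\]
The point of centering by $aX_i$ rather than by $\E Y_1$ is that it allows us to telescope cleanly against the natural indexing by $n$: writing $T_{S_n} = \sum_{i=1}^{S_n} X_i$, we have the exact identity
\[
\sum_{i=1}^{S_n} W_i \;=\; Q_{S_n} - a\, T_{S_n} \;=\; (Q_{S_n} - an) \;-\; a\,(T_{S_n} - n).
\]

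The main work is therefore to control the overshoot term $a(T_{S_n}-n)/\sqrt{n}$, for which it suffices to show $(T_{S_n}-n)/\sqrt{n}\to 0$ in probability and then invoke Slutsky. By the definition of $S_n$ we have $T_{S_n - 1} < n \le T_{S_n}$, hence $0 \le T_{S_n} - n < X_{S_n}$. To bound $X_{S_n}$, fix $\epsilon > 0$ and choose $c > \nu_{0,0}^{-1}$; by Lemma \ref{LE58} the event $\{S_n \le cn\}$ has probability tending to $1$, and on this event $X_{S_n} \le \max_{1\le i \le cn} X_i$. Since $\E X_1^2 < \infty$ by Lemma \ref{LE57}, Lemma \ref{LE59} gives $\max_{1\le i \le cn} X_i/\sqrt{n}\to 0$ in probability, so $X_{S_n}/\sqrt{n}\to 0$ in probability as well.

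Combining these ingredients via Slutsky's theorem,
\[
\frac{Q_{S_n} - an}{\sqrt{n}} \;=\; \frac{1}{\sqrt{n}}\sum_{i=1}^{S_n} W_i \;+\; \frac{a\,(T_{S_n}-n)}{\sqrt{n}} \;\overset{d}{\longrightarrow}\; \mathcal{N}\bigl(0,\,\nu_{0,0}\delta^2\bigr),
\]
which is the desired conclusion. The only nontrivial step is the overshoot estimate; all of its ingredients (finite second moments of $X_1$ and $Y_1$, and the strong-law-type statement $S_n/n \to \nu_{0,0}$) are already available from Lemmas \ref{LE57}, \ref{LE58}, and \ref{LE59}, so the proof reduces to assembling these standard regenerative CLT pieces.
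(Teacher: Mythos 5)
Your proposal is correct and follows essentially the same route as the paper: apply Anscombe's theorem to the centered sequence $Y_i - aX_i$, then absorb the discrepancy $Q_{S_n}-an-\sum_{i\le S_n}(Y_i-aX_i)=a(T_{S_n}-n)\in[0,aX_{S_n})$ using Lemma \ref{LE59}. The only cosmetic difference is that the paper bounds $X_{S_n}\le\max_{1\le i\le n}X_i$ via the deterministic inequality $S_n\le n$ (since each $X_i\ge 1$), whereas you pass through the high-probability event $\{S_n\le cn\}$; both work.
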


\begin{proof}
Define $\tilde{Q}_{S_n} = \sum_{i = 1}^{S_n} (Y_i - a X_i)$. Then, by Theorem \ref{anscombe} we have
\begin{equation}\label{eq:clt1}
\frac{\tilde{Q}_{S_n}}{\sqrt{n}} \overset{d}{\longrightarrow} \mathcal{N}\left(0, \delta^2\nu_{0,0}\right).
\end{equation}
By the definition of $S_n$, we have
\begin{equation}\label{eq:clt2}
\tilde{Q}_{S_n} \le Q_{S_n} - an \le \tilde{Q}_{S_n} + a \cdot X_{S_n} \le \tilde{Q}_{S_n} + a\cdot \max_{1 \le i \le n}X_i.
\end{equation}
Here the last inequality follows since $S_n \le n$. By Lemma \ref{LE59}, we have
\[
\frac{\max_{1 \le i \le n}X_i}{\sqrt{n}} \overset{p}{\longrightarrow} 0.
\]
The theorem follows from (\ref{eq:clt1}) and (\ref{eq:clt2}).
\end{proof}

\begin{proof}[Proof of Theorem \ref{M5}]
It follows from Lemma \ref{LE42} that
\[
\frac{Q_{S_n} - an}{\sqrt{n}} - \frac{Y_{S_n}}{\sqrt{n}}\le \frac{\lcs(\Pi_n, \Pi'_n) - an}{\sqrt{n}} \le \frac{Q_{S_n} - an}{\sqrt{n}}.
\]
Since $1 \le Y_i \le X_i$, we have $\E (Y_i^2) < \E (X_i^2) < \infty$ by Lemma \ref{LE57}. Hence, by Lemma \ref{LE59}, it follows that
\[
\frac{\max_{1 \le i \le n}Y_i}{\sqrt{n}} \overset{p}{\longrightarrow} 0.
\]
Since $S_n \le n$, we have $Y_{S_n} \le \max_{1 \le i \le n}Y_i$. Thus
\[
\frac{Y_{S_n}}{\sqrt{n}} \overset{p}{\longrightarrow} 0.
\]
Therefore, by setting $\sigma \coloneqq \delta\sqrt{\nu_{0,0}}$, it follows from Theorem \ref{CLT} that
\begin{equation}\label{equ:last}
\frac{\lcs(\Pi_n, \Pi'_n) - an}{\sigma\sqrt{n}} \overset{d}{\longrightarrow} \mathcal{N}(0, 1).
\end{equation}
Theorem \ref{M5} follows from (\ref{equ:last}) and Lemma \ref{LE41}.
\end{proof}

\section*{Acknowledgements} The authors were supported in part by NSF grant DMS-1261010, an NSF CAREER Grant DMS-1554783 and a Sloan Research Fellowship.

\bibliographystyle{amsplain}
\bibliography{sampart}

\end{document}